\newtheorem{example}{Example}[section]
\newtheorem{theorem}{Theorem}[section]
\newtheorem{lemma}[theorem]{Lemma}
 \def\ss{\smallskip}
\def\m{\mbox}   
\newcommand{\q}{\quad}   \newcommand{\qq}{\qquad} 
\def\no{\noindent}
\newcommand{\pa}{\partial}
\newcommand{\lla}{\|{\hskip -1pt}|}
\newcommand{\rra}{\|{\hskip -1pt}|}
\newcommand{\al}{\alpha}
\newcommand{\Om}{\Omega}
\newcommand{\de}{\delta}
\newcommand{\lam}{\lambda}
\newcommand{\vep}{\varepsilon}
\newcommand{\lj}{|{\hskip -1pt} \|}
\newcommand{\rj}{|{\hskip -1pt} \|}
\newcommand{\diam}{\mathrm{diam}}
\newcommand{\cM}{{\cal M}}
\newcommand{\E}{{\mathbb{E}}}
\newcommand{\sd}{\mathsf{d}}
\newcommand{\R}{{\mathbf{R}}}
\newcommand{\be}{\begin{eqnarray}}
\newcommand{\ee}{\end{eqnarray}}
\newcommand{\beq}{\begin{equation}}
\newcommand{\eeq}{\end{equation}}
\newcommand{\ben}{\begin{eqnarray*}}
\newcommand{\een}{\end{eqnarray*}}
\newcommand{\nn}{\nonumber}
\newtheorem{assu}{Assumption}
\newtheorem{alg}{Algorithm}
\numberwithin{assu}{section}
\numberwithin{theorem}{section}
\numberwithin{alg}{section}
\begin{document}

\title{{Stochastic convergence of regularized solutions and their finite element approximations 
to \\ inverse source problems}
}
\author{Zhiming Chen\thanks{\footnotesize LSEC, Institute of Computational Mathematics,
Academy of Mathematics and System Sciences and School of Mathematical Science, University of
Chinese Academy of Sciences, Chinese Academy of Sciences,
Beijing 100190, China. 
 (zmchen@lsec.cc.ac.cn). } \and
Wenlong Zhang\thanks{\footnotesize Department of Mathematics, Southern University of Science and Technology
(SUSTech), Shenzhen, Guangdong Province, P.R.China. 
(zhangwl@sustech.edu.cn).}\and
Jun Zou \thanks{Department of Mathematics, 
The Chinese University of Hong Kong, Shatin, N.T., Hong Kong. 
(zou@math.cuhk.edu.hk).}}

\date{}
\maketitle


\begin{abstract}
In this work, we investigate the regularized solutions and their finite element solutions 
to the inverse source problems governed by partial differential equations, and 
establish the stochastic convergence and optimal finite element convergence rates 
of these solutions,  under pointwise measurement data with random noise. 
Unlike most existing regularization theories, the regularization error estimates are derived without any source 
conditions, while the error estimates of finite element solutions 
show their explicit dependence on the noise level, regularization parameter, mesh size, and time step size, 
which can guide practical choices among these key parameters in real applications. 
The error estimates also suggest an iterative algorithm for determining an optimal regularization parameter. 
Numerical experiments are presented to demonstrate the effectiveness of the analytical results. 
\end{abstract}

{\small {\bf Key words}. 
Inverse source problems, regularization, finite element approximation, stochastic error estimates.}

{\small {\bf AMS subject classifications}. 35R30, 65J20, 65M60, 65N21, 65N30}
%

\medskip
\section{Introduction}
This work presents a quantitative understanding of stochastic convergence of the regularized solutions and 
their finite element approximations to the inverse source problems governed by partial differential equations,  
under the measurement data with random noise. The inverse source 
problems may arise from very different applications and modeling, e.g., 
diffusion or groundwater flow processes \cite{new1,AB01, new3, GER83, new2, new6,Isakov2013}, 
heat conduction or convection-diffusion processes \cite{AB05, new4,GER83, liu16, tadi97}, 
or acoustic problems \cite{Badia2011, nelson20}. 
Pollutant source inversion can find many applications, e.g., 
indoor and outdoor air pollution, detecting and monitoring underground water pollution.
Physical, chemical and biological measures have been developed  for the identification of
sources and source strengths \cite{AB01,ZZFZG11,ZCP01}. 
Due to the important applications of ill-posed inverse source problems, stable numerical solutions have been widely 
studied, both deterministically and statistically \cite{LZ07,SK97, SK95}.
A popular approach for inverse source problems is the least-squares optimization 
with appropriate regularizations \cite{AB05, GER83,WY10}, which will be also the formulation we take in this work.

Our first main result is the establishment of the optimal stochastic error estimates of the regularized solutions 
in terms of the noise level, without any source conditions. 
This presents a brand new idea in error estimates of approximate solutions to ill-posed 
inverse problems achieved by regularization, and 
it is very different from the existing regularization theories and their approximation error estimates 
nearly all of which were established under some source conditions.
Regularization and convergence of regularized solutions have been widely studied
under various source conditions. The classical source condition requires 
the existence of a small source function \cite{engl89}. 
One source condition was proposed in \cite{engl2000} for an inverse conductivity problem
to relax the restrictive requirement on the smallness of the source function in the classical convergence theory 
\cite{engl89}. 
%
A variational source condition was proposed in \cite{HKPS07}, and were further extended 
in \cite{BH10, Fle10, Grasm10}. It is still a hot topic how to verify the classical or variational source conditions 
for most inverse problems under reasonable physical assumptions on the forward solutions and identifying 
parameters. It appears that the analytical techniques in all existing verifications of source conditions 
are quite different for each concrete inverse problem 
\cite{ChenJiangZou20, ChenYousept, Hohage15,Hohage1502,KLW20019}.
The current work makes a very promising first attempt to achieve the error estimates of regularized solutions,  
without any source conditions, hence gets rid of the technical difficulties in convergence analysis.


The second main contribution of this work is to derive the stochastic convergence and 
error estimates of finite element approximations 
to the inverse source problems. 
The error estimates of finite element solutions to inverse problems have been known to be quite challenging 
and still open to most practically important inverse problems. 
There have been various efforts on
error estimates of finite element solutions for inverse problems, especially for inverse elliptic and parabolic equations. 
But most existing studies have been carried out only for some not so frequently used mathematical formulations
of inverse problems; see \cite{zou10} for a detailed review and related references therein. We are not aware 
of any error estimates of finite element solutions to the frequently used least-squares formulations with Tikhonov regularizations, especially when the observation data are treated as random variables. 
We had a recent study in \cite{huzou19} for a modified regularization formulation for 
an inverse stationary source problem, where 
error estimates were achieved under some negative norms, which, however, may be rather inconvenient 
to realize in applications. 
One of our main focuses in this work is to make an attempt to fill the gap, to provide 
error estimates of finite element solutions to the least-squares formulations 
with Tikhonov regularizations, and 
more importantly, the observation data will be treated fully as random variables in the entire analysis. 
As we shall demonstrate, the new error estimates are not only optimal but also presents explicit dependence 
on the critical parameters like noise level, regularization parameter, mesh size and time stepsize.
Results of this type are highly desirable in real applications as they can 
provide explicit guidance in choosing these key parameters, and are also the major challenge and difficulty 
in error estimates of finite element solutions to regualarized inverse problems.  

We would like to mention a very important by-product from our convergence analysis, namely, 
it suggests a deterministic iterative algorithm for finding an effective regularization parameter. 
The choice of an effective regularization 
parameter is essential to the success of all output least-squares minimization approaches with Tikhonov regularizations, but it has remained to be a big challenge how to find an effective regularization parameter
for most inverse problems. 

Another feature of this work is that the entire analysis is carried out for a very practical scenario, i.e., 
the scattered data. We shall assume the measurement data is collected pointwise, with noise, 
otherwise no any additional regularity assumption is made. 
This is unlike analyses and results in most existing regularization theories. 

We studied in a recent work \cite{Chen-Zhang} the stochastic convergence of a nonconforming 
finite element method for the thin plate spline smoother for observational data. The spline model for scattered data has attracted considerable attention in the literature. The convergence rate in expectation of the error between the solution of the spline model and the true solution was established in \cite{Utreras}. Under the condition that measurement noise are sub-Gaussian random variables, the stochastic convergence of the empirical error was obtained by the peeling argument in \cite{Geer}
($d=1$) and \cite{Chen-Zhang} ($d=2,3$). 
We shall borrow some analytical tools from \cite{Chen-Zhang, Utreras} to study 
the stochastic convergence in expectation when the measurement noise is random variables 
having bounded variance in subsection \ref{ssec:bv}. The peeling argument is used in subsection \ref{ssec:gaussian} 
to show that the empirical error has an exponential decaying tail when the measurement noise is sub-Gaussian random variables. The discretization and its error estimates 
are considered in section \ref{sec:discrete} both in the expectation and in the Orilicz norm for sub-Gaussian measurement noise. The general results developed in section \ref{sec:stationary} 
are applied to study an inverse nonstationary source problem in section \ref{sec:heat_sourse}. 
And numerical examples are presented in section \ref{sec:numerics} to demonstrate the effectiveness of our 
analytical results. 

\section{Inverse source problem}\label{sec:stationary}

Let $\Omega$ be a bounded domain in $\mathbf{R}^d$ ($d=1,2,3$), and $X$ and $Y$ be two {real Hilbert spaces} 
such that $Y$ is continuously embedded in $C(\bar\Om)$ and compactly embedded in $L^2(\Om)$. 
The inner product and the norm of a Hilbert space $H$ are denoted as $(\cdot,\cdot)_H$ and $\|\cdot\|_H$, 
respectively; but $(\cdot,\cdot)$ is used if $H=L^2(\Om)$.
Throughout the paper, we shall use $C$, with or without subscript, to denote a generic constant independent of the mesh size $h$, the time step size $\tau$, and it may take a different value at each occurrence.

Let $S$ be a {linear bounded operator} from $X$ to $Y$ and $f^*\in X$ be an unknown source. 
We are interested in the inverse source problem of the general form:

\ss
({\bf SIP})  Given the measurement data of $Sf^*$, recover the source  $f^*$. 

\ss

There are many examples of inverse source problems of this type.
%
Our studies will focus on a very important physical scenario, {assuming that the pointwise} measurement data 
is collected on a set of distributed sensors located at $\{x_i\}^n_{i=1}$ {($x_i\ne x_j$ for $i\ne j$)} inside the physical domain 
$\Om$ \cite{AB05, new4, new2, new6, L08,nelson20, NNR98}. 
{We assume that the measurements come} with noise and takes the form 
\begin{equation}\label{eq:data} 
m_i=(Sf^*)(x_i)+e_i, \q i=1, 2, \cdots, n,
\end{equation} 
where $e=(e_1, e_2, \cdots, e_n)^T$ is the data noise vector, 
with $\{e_i\}^n_{i=1}$ being independent and identically distributed random variables on a probability space ($\mathfrak{X},\mathcal{F},\mathbb{P})$. {We shall denote $m=(m_1, m_2, \cdots, m_n)^T$ to be the vector of scattering data.}  
 {Throughout this work, we write $\mathbb{E}[A]$ for the expectation of a random variable $A$.}

We look for an approximate solution $f_n$ of the unknown source function $f^*$ through 
the least-squares regularized minimization:
\beq\label{p1}
\mathop {\rm min}\limits_{f\in X}\frac{1}{n}\sum\limits_{i=1}^{n} {|(Sf)(x_i)-m_i|^2+\lambda_n \|f\|_{X}^2},
\eeq
where $\lambda_n>0$ is called a regularization parameter.

We shall consider that the set of discrete points $\{x_i\}_{i=1}^n$ are scattered but quasi-uniformly distributed
in $\Omega$, i.e., there exists a constant $B>0$ such that ${d_{\max}}/{d_{\min}} \leq B$, where 
${d_{\max}}$ and ${d_{\min}}$ are defined by 
\beq\label{aa}
d_{\max}=\mathop {\rm sup}\limits_{x\in \Omega} \mathop {\rm inf}\limits_{1 \leq i \leq n} |x-x_i| 
~~~\mbox{and} ~~ ~
d_{\min}=\mathop {\rm inf}\limits_{1 \leq i \neq j \leq n} |x_i-x_j|.
\eeq
For any $u,v\in C(\bar\Omega)$ and $y\in \mathbb R^n$, we define 
$$
(y,v)_n=\frac{1}{n}\sum^n_{i=1}y_iv(x_i), \q 
(u,v)_n=\frac{1}{n}\sum^n_{i=1}u(x_i)v(x_i),
$$ 
and the semi-norm $\|u\|_n=(\sum_{i=1}^{n} u^2(x_i)/n)^{1/2}$ for any $u\in C(\bar\Omega)$.

Throughout the work, we consider two kinds of random noises $\{e_i\}^n_{i=1}$: 

\ss
\no ({\bf R1})  $\{e_i\}^n_{i=1}$ are independent random variables satisfying
$\mathbb{E}[e_i]=0$ and $\mathbb{E}[e^2_i]\leq \sigma^2$;

\no ({\bf R2})  $\{e_i\}^n_{i=1}$ are independent sub-Gaussian random variables with parameter $\sigma$,

\ss
\no and provide two different techniques to analyse the stochastic convergence 
and a practical approach to choose the parameter $\lambda_n$ in each case. 
We study the convergence under the expectation $\mathbb{E}$ in the case {\bf (R1)}, 
and establish a stronger convergence in the case {\bf (R2)}, where 
the errors have exponential decay tails. 

\subsection{Stochastic convergence for noisy data of variables with bounded variance}\label{ssec:bv}
We consider the measurement data of type {\bf (R1)} in this section, 
and study the stochastic convergence of the error under the expectation $\mathbb{E}$. 

\begin{assu}\label{Assumption1} 
We assume that 

(1) There exists a constant $\beta>1$ such that for all $u\in Y$, 
 \beq\label{f2}
\|u\|^2_{L^2(\Omega)}\le C(\|u\|^2_n+ n^{-\beta}\|u\|^2_{Y}),\ \ \|u\|^2_n\le C(\|u\|^2_{L^2(\Omega)}+ n^{-\beta}\|u\|^2_{Y}).
\eeq

(2) The first $n$ eigenvalues, $0<\eta_1\le\eta_2\le\cdots\le\eta_n$, of the eigenvalue problem 
\ben
(\psi ,v)_X=\eta\, (S\psi ,Sv)\ \ \forall v\in X,
\een
satisfy that $\eta_k\ge Ck^{\alpha}$ $(k=1, 2, \cdots, n)$ for some constant $C$ depending only on the operator $S:X\to Y$. The  {constant} $\al$ satisfies $1<\al\le\beta$.
\end{assu} 

The following observation is inspired by \cite{Utreras}, where it was shown 
that the solution of a thin plate spline smoother model is attained in a finite dimensional subset.
\begin{lemma}\label{lem:2.2} For a given $m\in \mathbb R^n$, let $f$ be the solution to the optimization 
problem
\begin{equation}
\min_{f\in X,(Sf)(x_i)=m_i} \|f\|^2_{X},
\end{equation}
then $f\in V_n$, where $V_n$ is an n-dimensional subset of $X$.
\end{lemma}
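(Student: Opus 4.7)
The plan is to exploit the fact that the $n$ equality constraints $(Sf)(x_i)=m_i$ are linear and bounded on $X$, so by the Riesz representation theorem each constraint is given by an inner product against a fixed ``representer'' $\phi_i\in X$. The natural candidate for $V_n$ is $\mathrm{span}\{\phi_1,\ldots,\phi_n\}$, and the minimum-norm principle (orthogonal projection onto the feasible set) should force the minimizer to lie in this subspace.

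First I would check that the point evaluation $L_i:X\to\mathbb{R}$, $L_i(f)=(Sf)(x_i)$, is a bounded linear functional. This follows because $S:X\to Y$ is bounded by assumption, and $Y$ is continuously embedded in $C(\bar\Omega)$, so $|L_i(f)|\le\|Sf\|_{C(\bar\Omega)}\le C\|Sf\|_Y\le C\|S\|\|f\|_X$. The Riesz representation theorem in the Hilbert space $X$ then yields $\phi_i\in X$ with $(Sf)(x_i)=(f,\phi_i)_X$ for every $f\in X$. Set $V_n:=\mathrm{span}\{\phi_1,\ldots,\phi_n\}\subset X$; this is a closed subspace of dimension at most $n$.

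Next I would use the orthogonal decomposition $X=V_n\oplus V_n^\perp$ to conclude. Writing any feasible $f=f_1+f_2$ with $f_1\in V_n$ and $f_2\in V_n^\perp$, the constraints read $(f,\phi_i)_X=(f_1,\phi_i)_X=m_i$, so they depend only on $f_1$, while $\|f\|_X^2=\|f_1\|_X^2+\|f_2\|_X^2$. Hence the minimum is attained precisely when $f_2=0$, i.e. $f\in V_n$. If one wishes to assert exact dimension $n$ rather than ``at most $n$,'' linear independence of $\{\phi_i\}$ needs to be argued: from $\sum c_i\phi_i=0$ one gets $\sum c_i(Sf)(x_i)=0$ for all $f\in X$, which together with the distinctness of the points $x_i$ and a density/surjectivity property of $S$ (implicit in the setting, since the solvability of the constrained problem for arbitrary $m$ already forces it) rules out nontrivial relations.

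I expect the conceptually interesting step to be essentially free: the representer trick immediately produces $V_n$. The only mild subtlety is the linear independence of $\{\phi_i\}_{i=1}^n$ guaranteeing $\dim V_n=n$ exactly; handling this cleanly will likely require either a standing assumption that for every $m\in\mathbb{R}^n$ the constraint set is nonempty, or explicit identification of $\phi_i$ through the adjoint $S^*$ and a unique-continuation-type argument. Apart from this bookkeeping, the proof is a textbook application of the projection theorem in $X$.
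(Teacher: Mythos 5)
Your proof is correct and follows essentially the same route as the paper's: both rest on the orthogonal decomposition of $X$ into the null space $V=\{v\in X:(Sv)(x_i)=0,\ i=1,\dots,n\}$ of the constraint functionals and its $n$-dimensional orthogonal complement, the only difference being that you realize that complement as the span of the Riesz representers of $f\mapsto(Sf)(x_i)$, while the paper builds the cardinal basis $\psi_i=\phi_i-P_V[\phi_i]$ satisfying $(S\psi_i)(x_j)=\delta_{ij}$, which it then reuses in Lemma~\ref{lemma:2.3} and Theorem~\ref{thm:2.1}. Your closing remark about the linear independence of the representers corresponds exactly to the surjectivity assumption the paper makes implicitly when it posits the existence of $\phi_i$ with $(S\phi_i)(x_j)=\delta_{ij}$.
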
 

\begin{proof} 
Let $V$ be a subset of $X$ such that
\ben
V=\{v\in X: (Sv)(x_i)=0, i=1,2,\cdots, n\}\,.
\een
Define the projection operator $P_V: X \rightarrow V$,
\ben
(P_V [f],v)_X=(f,v)_X\ \  \forall v\in V.
\een
Choose $\phi_i\in X$ such that $(S\phi_i)(x_j)=\delta_{ij}$, where $\de_{ij}$ is the Kronecker delta function.
Let $\psi_i= - P_V[\phi_i] +\phi_i$ and $V_n=\m{span}\{\psi_1, \cdots, \psi_n\}$. {It's easy to check that $(S\psi_i)(x_j)=\delta_{ij}$ also holds.} For any $f\in X$, 
define the interpolation operator $I$:
\ben
If=\sum_{i=1}^n (Sf)(x_i) \psi_i.
\een
We can easily see that $If\in  V_n$ and $f-If\in V$, hence we derive 
\ben
(f-If,If)_X
&=&(f-If,\sum_{i=1}^n (Sf)(x_i)(\phi_i- P_V[\phi_i] ))_X\\
&=&\sum_{i=1}^n (Sf)(x_i)(f-If,\phi_i- P_V[\phi_i])_X=0\,,
\een 
where we have used the fact that $(v,\phi_i- P_V[\phi_i])_X=0$ for all $v\in V$.

We see directly from the above equality that $(If,If)_X\leq (f,f)_X$, hence we have 
\ben
\min_{f\in V_n,(Sf)(x_i)=m_i} \|f\|^2_{X} =\min_{f\in X,Sf(x_i)=m_i} \|f\|^2_{X}.
\een
This completes the proof.  \end{proof}

\begin{lemma}\label{lemma:2.3} {Assume Assumption \ref{Assumption1} is fulfilled.} Let $V_n$ be defined as in Lemma \ref{lem:2.2}, then the eigenvalue problem
\beq\label{eigen-n}
(\psi ,v)_X=\rho \,(S\psi ,Sv)_n ~~\forall \,v\in V_n,
\eeq
has $n$ eigenvalues $\rho_1\le\rho_2\le\cdots\le\rho_n$, and all the eigenfunctions form 
an orthogonal basis of $V_n$ with respect to the norm $\|S\cdot\|_n$. Moreover,
there exists a constant $C>0$ independent of $k$ such that
$\rho_k\ge Ck^{\alpha}$ for $k=1,2,\cdots, n.$
\end{lemma}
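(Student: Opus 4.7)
The plan is to handle the existence and orthogonality statements by a single application of standard finite-dimensional spectral theory, and to derive the lower bound $\rho_k\ge Ck^\alpha$ by Courant--Fischer, comparing the discrete problem to the continuous one of Assumption \ref{Assumption1}(2) via the norm equivalence of Assumption \ref{Assumption1}(1). For the first step, I would use the basis $\{\psi_1,\ldots,\psi_n\}$ of $V_n$ constructed in the proof of Lemma \ref{lem:2.2}, for which $(S\psi_i)(x_j)=\delta_{ij}$. Any $\psi=\sum_{i=1}^n c_i\psi_i\in V_n$ then satisfies $(S\psi)(x_j)=c_j$, so $\|S\psi\|_n^2=\frac{1}{n}\sum_{i=1}^n c_i^2$, showing that $(S\cdot,S\cdot)_n$ is a genuine inner product on $V_n$. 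Equation \eqref{eigen-n} is therefore a generalized symmetric positive-definite eigenvalue problem on the $n$-dimensional real space $V_n$ equipped with the inner products $(\cdot,\cdot)_X$ and $(S\cdot,S\cdot)_n$, which yields the $n$ positive eigenvalues $\rho_1\le\cdots\le\rho_n$ together with simultaneously orthogonal eigenfunctions forming a basis of $V_n$.

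For the growth rate, I set $\mu_k:=\rho_k^{-1}$, so $\mu_1\ge\cdots\ge\mu_n>0$ are the decreasing eigenvalues of the self-adjoint operator $T:V_n\to V_n$ determined by $(T\psi,v)_X=(S\psi,Sv)_n$; similarly I write $\tilde\mu_k:=\eta_k^{-1}$ for the continuous problem of Assumption \ref{Assumption1}(2), so that $\tilde\mu_k\le Ck^{-\alpha}$. The target becomes $\mu_k\le Ck^{-\alpha}$. I would invoke the Courant--Fischer characterization in the form
\[
\mu_k=\min_{\substack{U\subset V_n\\\dim U=n-k+1}}\ \max_{\psi\in U\setminus\{0\}}\frac{\|S\psi\|_n^2}{\|\psi\|_X^2},
\]
and test it on the subspace $U_0=V_n\cap\tilde V_{k-1}^{\perp}$, where $\tilde V_{k-1}$ is the span in $X$ of the top $k-1$ eigenfunctions of the continuous problem and $\perp$ denotes the $X$-orthogonal complement; a dimension count gives $\dim U_0\ge n-(k-1)$, which contains an $(n-k+1)$-dimensional admissible subspace.

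For any $\psi\in U_0$, orthogonality to the leading $k-1$ eigenspaces of the continuous problem gives $\|S\psi\|_{L^2(\Omega)}^2\le\tilde\mu_k\|\psi\|_X^2$, and together with Assumption \ref{Assumption1}(1) applied to $u=S\psi\in Y$ and the boundedness $\|S\psi\|_Y\le C\|\psi\|_X$,
\[
\|S\psi\|_n^2\le C\|S\psi\|_{L^2(\Omega)}^2+Cn^{-\beta}\|S\psi\|_Y^2\le(C\tilde\mu_k+Cn^{-\beta})\|\psi\|_X^2.
\]
Hence $\mu_k\le C\tilde\mu_k+Cn^{-\beta}\le Ck^{-\alpha}+Cn^{-\beta}$, and since $1<\alpha\le\beta$ and $1\le k\le n$ we have $n^{-\beta}\le n^{-\alpha}\le k^{-\alpha}$, giving $\mu_k\le Ck^{-\alpha}$ and thus $\rho_k\ge Ck^\alpha$. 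The step requiring some care is the choice of Courant--Fischer: using the $(n-k+1)$-dimensional version lets $U_0$ be carved out of $V_n$ while the orthogonality is imposed in the ambient space $X$, which is exactly what allows the continuous spectral bound on $\tilde T$ to be transferred to the discrete operator $T$ on $V_n$.
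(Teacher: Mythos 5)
Your proof is correct and follows essentially the same route as the paper: both establish that $(S\cdot,S\cdot)_n$ is an inner product on $V_n$ via the cardinal basis $(S\psi_i)(x_j)=\delta_{ij}$, and both obtain $\rho_k\ge Ck^\alpha$ by a Courant--Fischer comparison with the continuous eigenvalue problem of Assumption \ref{Assumption1}(2), using the norm equivalence \eqref{f2} together with $\|S\psi\|_Y\le C\|\psi\|_X$ and the observation $k^\alpha n^{-\beta}\le 1$. The only (immaterial) difference is that you work with the reciprocal eigenvalues through the dual $(n-k+1)$-dimensional characterization and an explicit test subspace $V_n\cap\tilde V_{k-1}^\perp$, whereas the paper bounds the $k$-dimensional min--max directly by enlarging the family of admissible subspaces from $V_n$ to $X$ and identifying the result with $(\eta_k^{-1}+n^{-\beta})^{-1}$.
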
 
\begin{proof} 
Consider $V_n=\m{span}\{\psi_i\}_{i=1}^n$ as defined in the proof of Lemma \ref{lem:2.2}, 
and $(S\psi_i)(x_j)=\delta_{ij}$. 
We can write $\psi=\sum_{i=1}^n(S\psi)(x_i)\psi_i$ 
for any $\psi\in V_n$. This implies $\|S\cdot\|_n$ is a norm of $V_n$. Therefore, the generalized eigenvalue problem 
\eqref{eigen-n} has $n$ finite eigenvalues $\rho_1\le\rho_2\le\cdots\le\rho_n$ and all eigenfunctions 
form an orthogonal basis of $V_n$ with respect to the norm $\|S\cdot\|_n$. 

We are now ready to give a lower bound of the eigenvalues $\rho_k$. 
Using the min-max principle of the Rayleigh quotient for the eigenvalues and \eqref{f2}, we can derive 
\ben
\rho_k
&=&\min_{dim(X)=k,X\subset V_n}\max_{u\in X} \frac{(u,u)_X}{(Su,Su)_n}\\
&\geq & C \min_{dim(X)=k, X\subset V_n}\max_{u\in X} \frac{(u,u)_X}{(Su,Su) + n^{-\beta}(u,u)_X}\\
&\geq & C \min_{dim(X)=k, X\subset L^2(\Omega)}\max_{u\in X} \frac{(u,u)_X}{(Su,Su) + n^{-\beta}(u,u)_X}\\
&=& C \frac{1}{\eta_k^{-1}+ n^{-\beta}}
\ge C\frac{1}{k^{-\al}+n^{-\beta}},
\een
where we have used the fact that $\eta_k\ge Ck^\al$ by Assumption 1. Now $k^{\al}n^{-\beta}\le n^{\al-\beta}\le 1$ for all $k\leq n$ and $\al\le\beta$. We conclude that $\rho_k\ge Ck^\al$. This completes the proof. 
\end{proof}

\begin{theorem}\label{thm:2.1}
{Assume Assumption \ref{Assumption1} is fulfilled.} Let $f_n\in X$ be the unique solution of (\ref{p1}).
Then there exist constants $\lambda_0 > 0$ and $C>0$ such that for any $\lambda_n \leq \lambda_0$,
\begin{align}
\mathbb{E} \big[\|Sf_n-Sf^*\|^2_n\big] &\leq C \lambda_n \|f^*\|^2_{X} + {C\sigma^2}/({n\lambda^{1/\alpha}_n}),
\label{p5}\\
\mathbb{E} \big[\|f_n-f^*\|^2_{X}\big] &\leq C \|f^*\|^2_{X} + {C\sigma^2}/({n\lambda^{1+1/\alpha}_n}).\label{p6}
\end{align}
{More over if we assume the eigenfunctions $\{\phi_k\}_{k=1}^\infty$ of $S$ form an orthonormal basis of $X$, and define the spase $Z$ as $$Z=\{v\in X:~ v=\sum_{k=1}^\infty v_k\phi_k,~with~ v_k=(v,\phi_k)_{L^2(\Omega)}~ and ~ \sum_{k=1}^\infty \eta_k^{1/2}v_k^2<\infty\}.$$ Then we have the following weaker convergence result for $n^{-\beta}\leq \lambda_n$:
\begin{equation}\label{w-con}
\mathbb{E} \big[\|f_n-f^*\|^2_{Z'}\big] \leq C \lambda^{1/2}_n \|f^*\|^2_{X} + {C\sigma^2}/({n\lambda^{1/2+1/\alpha}_n}),
\end{equation}
where $Z'$ is the dual space of $Z$.}
\end{theorem}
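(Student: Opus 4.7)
My plan is as follows. First I would show that the minimizer $f_n$ of \eqref{p1}, which is unique by strict convexity, in fact lies in the $n$-dimensional space $V_n$ from Lemma \ref{lem:2.2}. The construction $\psi_i = \phi_i - P_V[\phi_i]$ makes $V_n$ exactly the $X$-orthogonal complement of $V = \{v \in X : (Sv)(x_i) = 0,\ i=1,\dots,n\}$, so every $f \in X$ decomposes as $f = f_1 + f_2$ with $f_1 \in V_n,\ f_2 \in V$. The data-fitting term in \eqref{p1} sees only $Sf_1$ at the sample points, while $\|f\|_X^2 = \|f_1\|_X^2 + \|f_2\|_X^2$. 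Hence the optimum has $f_2 = 0$, i.e.\ $f_n \in V_n$.

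Next I would diagonalize using the eigenbasis $\{\psi_k\}_{k=1}^n \subset V_n$ from Lemma \ref{lemma:2.3}, normalized so that $(S\psi_k, S\psi_l)_n = \delta_{kl}$ and $(\psi_k, \psi_l)_X = \rho_k\delta_{kl}$. Expanding $f_n = \sum_k a_k\psi_k$ and testing the Euler-Lagrange equation for \eqref{p1} against $\psi_k$ yields the closed form $a_k = (c_k + \xi_k)/(1+\lambda_n\rho_k)$, where $c_k := (Sf^*, S\psi_k)_n$ and $\xi_k := (e, S\psi_k)_n$. Introducing the auxiliary $\tilde f^* := \sum_k c_k\psi_k \in V_n$ (which, by the basis property, agrees with $f^*$ at all sample points in the sense $(S\tilde f^*)(x_i) = (Sf^*)(x_i)$), the orthonormality in $\|\cdot\|_n$ and the $\rho_k$-weighted orthonormality in $\|\cdot\|_X$ yield the key identities
\begin{equation*}
\|S(f_n - f^*)\|_n^2 = \sum_k (a_k - c_k)^2, \qquad \|f_n - \tilde f^*\|_X^2 = \sum_k \rho_k(a_k - c_k)^2,
\end{equation*}
together with the bound $\|\tilde f^*\|_X^2 = \sum_k \rho_k c_k^2 \leq \|f^*\|_X^2$ inherited from the minimization in Lemma \ref{lem:2.2}. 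Splitting $(a_k - c_k)^2 \leq 2\xi_k^2/(1+\lambda_n\rho_k)^2 + 2\lambda_n^2\rho_k^2 c_k^2/(1+\lambda_n\rho_k)^2$ separates the stochastic and bias contributions. The variance identity $\mathbb{E}[\xi_k^2] \leq \sigma^2/n$ is immediate from independence of $\{e_i\}$, assumption {\bf (R1)}, and $\|S\psi_k\|_n = 1$. Using $\rho_k \geq Ck^\alpha$ with $\alpha > 1$ I can compare sums to integrals, obtaining $\sum_k (1+\lambda_n\rho_k)^{-2} \leq C\lambda_n^{-1/\alpha}$ and $\sum_k \rho_k(1+\lambda_n\rho_k)^{-2} \leq C\lambda_n^{-1-1/\alpha}$. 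Combined with the elementary inequalities $\lambda_n^2\rho_k^2/(1+\lambda_n\rho_k)^2 \leq \lambda_n\rho_k$ (for \eqref{p5}) and $\leq 1$ (for \eqref{p6}), together with $\|\tilde f^* - f^*\|_X^2 \leq 4\|f^*\|_X^2$, these estimates produce \eqref{p5} and \eqref{p6} directly.

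For the weaker estimate \eqref{w-con} I would argue by interpolation. Expanding in the $\phi_k$-basis the definition of $Z$ gives by Cauchy-Schwarz the inequality $\|u\|_{Z'}^2 \leq C\|Su\|_{L^2(\Om)}\|u\|_X$, which is the central analytic tool. To trade the continuous $L^2$-norm for quantities we already control, I would invoke Assumption \ref{Assumption1}(1): $\|Su\|_{L^2(\Om)}^2 \leq C\|Su\|_n^2 + Cn^{-\beta}\|Su\|_Y^2$, and absorb the $n^{-\beta}\|Su\|_Y^2 \leq Cn^{-\beta}\|u\|_X^2$ contribution using $n^{-\beta} \leq \lambda_n$. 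Applied to $u = f_n - f^*$ together with \eqref{p5} and \eqref{p6}, this gives $\mathbb{E}\|Su\|_{L^2(\Om)}^2 \leq C\lambda_n\|f^*\|_X^2 + C\sigma^2/(n\lambda_n^{1/\alpha})$. Then a Cauchy-Schwarz at the expectation level, $\mathbb{E}\|u\|_{Z'}^2 \leq (\mathbb{E}\|Su\|_{L^2(\Om)}^2)^{1/2}(\mathbb{E}\|u\|_X^2)^{1/2}$, followed by a Young-type splitting of the resulting cross term $\sigma\|f^*\|_X\lambda_n^{-1/(2\alpha)}/\sqrt{n} \leq C\lambda_n^{1/2}\|f^*\|_X^2 + C\sigma^2/(n\lambda_n^{1/2 + 1/\alpha})$, produces the stated rate. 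The main obstacle I anticipate is the first step of this last argument: justifying the interpolation inequality $\|u\|_{Z'}^2 \leq C\|Su\|_{L^2(\Om)}\|u\|_X$ from the abstract definition of $Z$, reconciling the continuous eigenbasis $\{\phi_k\}$ of $S$ (adapted to the $L^2$ pairing) with the discrete eigenbasis $\{\psi_k\}$ of Lemma \ref{lemma:2.3} (adapted to $\|\cdot\|_n$) used throughout the analysis of $f_n - \tilde f^*$, while also cleanly handling the sampling-mismatch term $\tilde f^* - f^*$.
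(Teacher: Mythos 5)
Your argument is correct, and it reaches \eqref{p5}--\eqref{p6} by a genuinely different route from the paper. The paper never computes $f_n$ explicitly: it tests the optimality condition \eqref{p7} with $v=f_n-f^*$ to get the energy-norm bound \eqref{x4}, $\lla f_n-f^*\rra_{\lambda_n}\le \lambda_n^{1/2}\|f^*\|_X+\sup_{v}(e,Sv)_n/\lla v\rra_{\lambda_n}$, and then the whole spectral machinery (reduction of the supremum to $V_n$ via Lemma \ref{lem:2.2}, expansion in the eigenbasis of Lemma \ref{lemma:2.3}, and $\sum_k(1+\lambda_n\rho_k)^{-1}\le C\lambda_n^{-1/\alpha}$) is spent on the single stochastic supremum; both \eqref{p5} and \eqref{p6} then drop out of the two components of $\lla\cdot\rra_{\lambda_n}$. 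You instead prove $f_n\in V_n$ outright via the $X$-orthogonal splitting $X=V_n\oplus V$, diagonalize to get the closed-form filter $a_k=(c_k+\xi_k)/(1+\lambda_n\rho_k)$, and run a classical bias--variance decomposition against the sample-interpolant $\tilde f^*$; all of your individual claims check out, including $(S\tilde f^*)(x_i)=(Sf^*)(x_i)$, $\|\tilde f^*\|_X\le\|f^*\|_X$ (by Lemma \ref{lem:2.2}, since $\tilde f^*$ is the unique element of $V_n$ matching $f^*$ at the sample points), $\mathbb{E}[\xi_k^2]\le\sigma^2/n$, and the two spectral sums. What your version buys is an explicit solution formula and a cleanly separated bias term $\sum_k\lambda_n^2\rho_k^2c_k^2/(1+\lambda_n\rho_k)^2$; what the paper's version buys is robustness --- the same energy-norm template transfers verbatim to the discrete problem in Theorem \ref{thm:3.1} and to the sub-Gaussian peeling argument in Theorem \ref{thm:4.1}, where no closed form is available. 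For the weak estimate \eqref{w-con} your route coincides with the paper's: the interpolation bound $\|v\|_{Z'}^2\le\|Sv\|_{L^2(\Omega)}\|v\|_X$ follows exactly by the Cauchy--Schwarz manipulation in the $\phi_k$-expansion you describe (no reconciliation with the discrete basis $\{\psi_k\}$ is actually needed there, since the inequality is purely deterministic and is applied to $u=f_n-f^*$ after the fact), and the Cauchy--Schwarz at the expectation level plus Young's inequality on the cross term reproduces the stated rate under $n^{-\beta}\le\lambda_n$.
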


\begin{proof} By deriving the necessary condition of the quadratic minimization (\ref{p1}), we can readily see 
that the unique minimizer $f_n\in X$ satisfies the variational equation
\beq
\label{p7}
\lambda_n (f_n,v)_X+(Sf_n,Sv)_n =(m,Sv)_n\ \ \ \forall v\in X.
\eeq
For any $v\in X$, we introduce the energy norm
$\lla v\rra^2_{\lambda_n}:=\lambda(v,v)_X+\|Sv\|_n^2$.
By taking $v=f_n-f^*$ in \eqref{p7} {along with \eqref{eq:data}}, we obtain 
\beq\label{x4}
\lj f_n-f^*\rj_{\lam_n}\le \lam_n^{1/2} \|f^*\|_{X}+\sup_{v\in L^2(\Om)}\frac{(e,Sv)_n}{\lla v\rra_{\lam_n}}.
\eeq

It remains to estimate the supremum term in \eqref{x4}. Using Lemma \ref{lem:2.2}, 
we can rewrite this supremum term equivalently as 
\ben
\sup_{v\in X}\frac{(e,Sv)^2_n}{\lla v\rra_{\lam_n}^2}
&=&\sup_{v\in X}\frac{(e,Sv)^2_n}{\lambda_n(v,v)_X+\|Sv\|_n^2}\\
&\le & \sup_{v\in X}\frac{(e,Sv)^2_n}{\lambda_n \min_{u\in X, Su(x_i)=Sv(x_i)}(u,u)_X+\|Sv\|_n^2}\\
&= & \sup_{v\in X}\frac{(e,Sv)^2_n}{\lambda_n \min_{u\in V_n, Su(x_i)=Sv(x_i)}(u,u)_X+\|Sv\|_n^2}\\
&= & \sup_{v\in V_n}\frac{(e,Sv)^2_n}{\lambda_n(v,v)_X+\|Sv\|_n^2}\,.\\
\een

Let $\rho_1\le\rho_2\le\cdots\le\rho_n$ be the eigenvalues of the problem
\beq\label{x2}
(\psi,v)_X=\rho(S\psi,Sv)_n\ \ \ \ \forall v\in V_n, 
\eeq
with the corresponding eigenfunctions $\{\psi_k\}^n_{k=1}$, which is an
orthonormal basis of $V_n$ under the inner product $(S\cdot,S\cdot)_n$. Thus 
$(S\psi_k,S\psi_l)_n=\delta_{kl}$ and consequently, $(\psi_k,\psi_l)_X=\rho_k\de_{kl}$, $k,l=1,2,\cdots,n$.
 
Now for any $v\in V_n$, 
we have the expansion $v(x)=\sum^n_{k=1}v_k\psi_k(x)$, where $v_k=(Sv,S\psi_k)_n$ for $k=1,2,\cdots,n$. Thus
$\lla v\rra^2_{\lam_n}=\sum^n_{k=1}(\lam_n\rho_k+1)v_k^2$.
By the Cauchy-Schwarz inequality we can readily get 
\ben
(e,Sv)_n^2 &= & {\frac{1}{n}\sum^n_{i=1}e_i\sum^n_{k=1} v_k \psi_k(x_i) = \frac{1}{n}\sum^n_{k=1}v_k\sum^n_{i=1} e_i \psi_k(x_i) }\\
&\leq &\frac 1{n^2}\sum^n_{k=1}(1+\lam_n\rho_k)v_k^2\cdot\sum^n_{k=1}(1+\lam_n\rho_k)^{-1}\Big(\sum^n_{i=1}e_i (S\psi_k)(x_i)\Big)^2.
\een
This, along with the fact that $\|S\psi_k\|_n=1$, implies
\ben
{\mathbb{E}\Big[\sup_{v\in V_n}\frac{(e,Sv)_n^2}{\lla v\rra^2_{\lam_n}}\Big]}
&\le&\frac 1{n^2}\sum^n_{k=1}(1+\lam_n\rho_k)^{-1}\mathbb{E}\Big(\sum^n_{i=1}e_i (S\psi_k)(x_i)\Big)^2\\
&\leq &\sigma^2n^{-1}\sum^n_{k=1}(1+\lam_n\rho_k)^{-1}.
\een
{In the last inequality, we use the fact that the random variables $\{e_i\}^n_{i=1}$ are independent and identically distributed, i.e. $\mathbb{E}[e_ie_j]=\delta_{ij}$.}
 
Now by Assumption \ref{Assumption1} we readily derive 
\ben
\mathbb{E}\Big[\sup_{v\in X}\frac{(e,Sv)_n^2}{\lla v\rra^2_{\lam_n}}\Big]
&\le&C\sigma^2 n^{-1}\sum_{k=1}^n(1+\lam_nk^{\alpha})^{-1}\le C\frac{\sigma^2}{n\lam_n^{1/\alpha}}\,.
\een
This completes the proof by using \eqref{x4}.

Furthermore, if the eigenfunctions $\{\phi_k\}_{k=1}^\infty$ of $S$ form an orthonormal basis of $X$, i.e. $(\phi_k,\phi_l)=\delta_{kl}$, then $(S\phi_k,S\phi_l)=\eta^{-1}_k\delta_{kl}$. For any $v\in X$, we have the expansion $v=\mathop\sum_{k=1}^\infty v_k\phi_k$ with $v_k=(v,\phi_k)$. Obviously, $\|Sv\|^2_{L^2(\Omega)}=\sum_{k=1}^\infty \eta_k^{-1}v^2_k$ and $\|v\|^2_X=\sum_{k=1}^\infty v^2_k$. By definition of dual space and $\|g\|^2_Z=\sum_{k=1}^\infty \eta_k^{1/2}g_k^2$, 
\ben
\|v\|_{Z'}&=&\sup_{0\ne g\in Z}\frac{|(v,g)_X|}{\|g\|_X}= \sup_{0\ne g\in Z}\frac{|\sum_{k=1}^\infty g_k v_k|}{\|g\|_X}\\
&\leq & \frac{(\sum_{k=1}^\infty \eta_k^{-1/2}g^2_k )^{1/2}(\sum_{k=1}^\infty \eta_k^{1/2}g^2_k )^{1/2}}{\|g\|_X}\\
&=& (\sum_{k=1}^\infty \eta_k^{-1/2}v^2_k )^{1/2}\\
&\leq & (\sum_{k=1}^\infty \eta_k^{-1}v^2_k )^{1/4}(\sum_{k=1}^\infty v^2_k )^{1/4} =\|Sv\|^{1/2}_{L^2(\Omega)}\|v\|^{1/2}_X.
\een
Take $v$ to be $f^*-f_n$ in the above inequality, we derive that,
\ben
\|f^*-f_n\|^2_{Z'}\leq \|Sf^*-Sf_n\|_{L^2(\Omega)}\|f^*-f_n\|_X.
\een
From Assumption \ref{Assumption1} (1), 
$\|Sf^*-Sf_n\|^2_{L^2(\Omega)}\leq \|Sf^*-Sf_n\|^2_n + n^{-\beta}\|f^*-f_n\|^2_X$, along with \eqref{p5} and \eqref{p6}, we finally have,
\begin{equation*}
\mathbb{E} \big[\|f_n-f^*\|^2_{Z'}\big] \leq C(1+\lambda_n^{-1}n^{-\beta})^{1/2} \left(\lambda^{1/2}_n \|f^*\|^2_{X} + {\sigma^2}/({n\lambda^{1/2+1/\alpha}_n})\right).
\end{equation*}
With $n^{-\beta}\leq \lambda_n$, we prove the weaker convergence \eqref{w-con}.
 \end{proof} 

\subsection{Stochastic convergence for noisy data being sub-gaussian random variables}\label{ssec:gaussian}
We consider in this section the case {\bf (R2)} for the data \eqref{eq:data}, 
that is, 
\begin{equation}\label{e1}
\mathbb E\Big[ \m{exp}(\lambda (e_i - \mathbb E[e_i]))\Big] \le \m{exp} \Big(\frac 12 \sigma^2 \lambda^2\Big) 
~~\forall\,\lambda \in \mathbb R\,, 
\end{equation}
and study the stochastic convergence of the error $\|Sf^*-S f_n\|_n$.

We first give a brief introduction of sub-Gaussian random variables and the theory of empirical processes that will be 
used in our subsequent analysis; see \cite{Chen-Zhang,Vaart, Geer} for more details. 
The probability distribution function of a sub-Gaussian random variable $Z$ has an exponentially decaying tail, that is, 
\beq\label{gg1}
\mathbb{P}(|Z-\E [Z]|\ge z)\le 2\,\m{exp}\Big( {-\frac {z^2}{2\sigma^2} \Big)} ~~\forall\, z>0.
\eeq

We shall also use the Orlicz norm. For a monotonically increasing convex function $\psi$ satisfying $\psi(0)=0$, 
the Orilicz norm $\|Z\|_\psi$ of a random variable $Z$ is defined as
\beq\label{e2}
\|Z\|_\psi=\inf\Big\{C>0:\mathbb{E}\Big[\psi\Big(\frac{|X|}C\Big)\Big]\le 1\Big\}.
\eeq
For most of our analyses, we will use the Orlicz norm $\|Z\|_{\psi_2}$,  with
$\psi_2(t)=e^{t^2}-1$ for $t>0$. Through some calculations, we have the estimate (see, e.g., \cite[(4.5)]{Chen-Zhang})
\beq\label{e3}
\mathbb{P}(|Z|\ge z)\le 2\,\m{exp}\Big(-\frac{z^2}{\|Z\|_{\psi_2}^2}\Big)   ~~\forall \,z>0.
\eeq

Consider a semi-metric space $\mathbb{T}$ with a semi-metric $\sd$ and the random process $\{Z_t:t\in \mathbb{T}\}$ 
indexed by $\mathbb{T}$. The random process $\{Z_t:t\in \mathbb{T}\}$ is called sub-Gaussian if
\beq\label{e41}
\mathbb{P}(|Z_s-Z_t|>z)\le 2\,\m{exp}\Big( -\frac{z^2}{2\,\sd(s,t)^2} \Big) ~~\forall \,s,t\in \mathbb{T}, ~~z>0.
\eeq
For a semi-metric space $(\mathbb{T},\sd)$ and $\vep>0$, the covering number $N(\vep,\mathbb{T},\sd)$ is 
the minimum number of $\vep$-balls that cover $\mathbb{T}$; and $\log N(\vep,\mathbb{T},\sd)$ is 
called the covering entropy that is a crucial quantity to characterize the complexity of spce $\mathbb{T}$.  
We assume
\begin{assu}\label{Assumption2} For a unit ball $SY$ in $Y$ and any $\vep>0$, 
there exists a constant $\gamma<2$ such that the covering entropy is controlled by 
\ben
\log N(\vep, SY, \|\cdot\|_{L^{\infty}(\Omega)})\le C\vep^{-\gamma}\,.
\een
%
%
\end{assu} 

Important estimates of the covering entropy for Sobolev spaces can be found in \cite{Birman}.  We shall often need the following maximal inequality \cite[Section 2.2.1]{Vaart}. 
\begin{lemma}\label{lem:4.2}
If $\{Z_t:t\in \mathbb{T}\}$ is a separable sub-Gaussian random process, then it holds for some constant $K>0$ that 
\ben
\|\sup_{s,t\in T}|Z_s-Z_t|\|_{\psi_2}\le K\int^{\diam\, \mathbb{T}}_0\sqrt{\log N\Big(\frac\vep 2,T,\sd\Big)}\ d\vep\,.
\een
\end{lemma}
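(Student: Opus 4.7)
The plan is to prove this classical maximal inequality by a chaining argument (Dudley's entropy bound). First I would establish an auxiliary finite-maximum Orlicz inequality: for any collection of random variables $\{X_i\}_{i=1}^N$, convexity of $\psi_2$ applied to $\psi_2(\max_i|X_i|/C)$ together with Jensen's inequality gives
\ben
\Big\|\max_{1\le i\le N}|X_i|\Big\|_{\psi_2}\le C\sqrt{\log(1+N)}\,\max_{1\le i\le N}\|X_i\|_{\psi_2}.
\een
Combined with the sub-Gaussian increment assumption \eqref{e41}, which via the definition \eqref{e2} yields $\|Z_s-Z_t\|_{\psi_2}\le C\sd(s,t)$, this inequality will control the Orlicz norm of any finite supremum of increments.

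Next I would carry out the chaining. Set $\vep_k=2^{-k}\diam\,\mathbb{T}$ and, for each $k\ge 0$, fix a minimal $\vep_k$-net $T_k\subset \mathbb{T}$ of cardinality $N_k=N(\vep_k,\mathbb{T},\sd)$ together with a projection $\pi_k:\mathbb{T}\to T_k$ satisfying $\sd(t,\pi_k(t))\le \vep_k$. Fix a base point $t_0\in \mathbb{T}$, and use separability to reduce everything to countable suprema. Then for each $t\in \mathbb{T}$ the telescoping identity
\ben
Z_t-Z_{\pi_0(t)}=\sum_{k=0}^\infty\big(Z_{\pi_{k+1}(t)}-Z_{\pi_k(t)}\big)
\een
holds almost surely, and the successive links obey $\sd(\pi_{k+1}(t),\pi_k(t))\le\vep_k+\vep_{k+1}\le 3\vep_{k+1}$.

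The main estimate is a layer-by-layer control of the telescoping sum. The number of distinct pairs $(\pi_k(t),\pi_{k+1}(t))$ as $t$ varies is at most $N_kN_{k+1}\le N_{k+1}^2$, so the finite-maximum inequality applied to each layer gives
\ben
\Big\|\sup_{t\in\mathbb{T}}|Z_{\pi_{k+1}(t)}-Z_{\pi_k(t)}|\Big\|_{\psi_2}\le C\vep_{k+1}\sqrt{\log(1+N_{k+1}^2)}\le C'\vep_{k+1}\sqrt{\log N(\vep_{k+1},\mathbb{T},\sd)}.
\een
Summing over $k$ via the triangle inequality for $\|\cdot\|_{\psi_2}$ and using that $\vep\mapsto \log N(\vep/2,\mathbb{T},\sd)$ is nonincreasing, the dyadic sum is bounded, up to an absolute constant, by the entropy integral
\ben
\sum_{k\ge 0}\vep_{k+1}\sqrt{\log N(\vep_{k+1},\mathbb{T},\sd)}\le C\int_0^{\diam\,\mathbb{T}}\sqrt{\log N(\vep/2,\mathbb{T},\sd)}\,d\vep.
\een
Finally, a last application of the triangle inequality passes from $\sup_t|Z_t-Z_{\pi_0(t)}|$ to $\sup_{s,t}|Z_s-Z_t|$, absorbing the factor of two into the constant $K$.

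The main obstacle is the bookkeeping in the chaining step: one must simultaneously control the counting of distinct pairs at each scale (so that the logarithm in the finite-maximum inequality gives exactly $\sqrt{\log N_{k+1}}$, not something worse), justify exchanging the supremum with the countable telescoping sum using separability, and match the discrete dyadic sum to the stated entropy integral (which explains the innocuous factor $\vep/2$ appearing in the integrand). Once these routine but delicate points are handled, the rest is a geometric-series style comparison that is essentially forced by the choice $\vep_k=2^{-k}\diam\,\mathbb{T}$.
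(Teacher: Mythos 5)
The paper does not prove this lemma; it quotes it as a known maximal inequality from \cite[Section 2.2.1]{Vaart}, whose standard proof is exactly the chaining argument you outline (finite-maximum Orlicz inequality with the $\psi_2^{-1}(N)=\sqrt{\log(1+N)}$ factor, dyadic nets, telescoping, and comparison of the dyadic sum with the entropy integral). Your proposal is correct and coincides with that standard proof, including the routine separability and pair-counting points you flag.
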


The useful results in the following two lemmas can be found in \cite{Chen-Zhang}.
\begin{lemma}\label{lem:4.5}
$\{E_n(f):=(e,Sf)_n: f\in X\}$ is a sub-Gaussian random process with respect to the semi-distance $\sd(f,v)=
\sigma n^{-1/2}\|Sf-Sv\|_n$ for any $f, v\in X$. 
\end{lemma}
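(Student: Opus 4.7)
The plan is to show that the increment $E_n(f) - E_n(v)$ is a weighted sum of independent sub-Gaussian random variables, compute its moment generating function, and then apply the standard Chernoff-type argument to obtain the required tail bound. Since the data $\{e_i\}$ are independent and each $e_i$ satisfies \eqref{e1}, the MGF of any linear combination factorizes, and this is what makes the argument short.

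First, I would rewrite the increment in a convenient form. By the definition of $(\cdot,\cdot)_n$,
\[
E_n(f) - E_n(v) = \frac{1}{n}\sum_{i=1}^n e_i\bigl[(Sf)(x_i) - (Sv)(x_i)\bigr] = \sum_{i=1}^n a_i\,e_i,\qquad a_i := \frac{1}{n}(Sf-Sv)(x_i).
\]
A direct computation then gives
\[
\sum_{i=1}^n a_i^2 \;=\; \frac{1}{n^2}\sum_{i=1}^n \bigl[(Sf-Sv)(x_i)\bigr]^2 \;=\; \frac{1}{n}\|Sf-Sv\|_n^2.
\]

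Next, assuming as usual that $\mathbb{E}[e_i]=0$ (so that the MGF bound \eqref{e1} applies directly to $e_i$), the independence of the $e_i$'s yields, for any $\lambda\in\mathbb{R}$,
\[
\mathbb{E}\bigl[\exp\bigl(\lambda(E_n(f)-E_n(v))\bigr)\bigr]
= \prod_{i=1}^n \mathbb{E}\bigl[\exp(\lambda a_i e_i)\bigr]
\le \prod_{i=1}^n \exp\!\Bigl(\tfrac{1}{2}\sigma^2\lambda^2 a_i^2\Bigr)
= \exp\!\Bigl(\tfrac{\sigma^2\lambda^2}{2n}\|Sf-Sv\|_n^2\Bigr).
\]
Recognizing the exponent as $\tfrac12 \sd(f,v)^2\lambda^2$, Markov's inequality with the optimal choice $\lambda = z/\sd(f,v)^2$ gives $\mathbb{P}(E_n(f)-E_n(v)>z)\le \exp(-z^2/(2\sd(f,v)^2))$. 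Applying the same estimate to the reverse increment $-(E_n(f)-E_n(v))$ and taking a union bound produces the two-sided inequality \eqref{e41} with the claimed semi-distance.

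The only delicate point is the implicit zero-mean condition: one needs $\mathbb{E}[e_i]=0$ (or an explicit centering step) so that the MGF bound \eqref{e1} applies unshifted and the resulting tail is symmetric around zero; apart from this, the proof is entirely routine. No geometric or covering-entropy ingredients are required at this stage — those enter only when Lemma \ref{lem:4.2} is invoked for suprema of this process.
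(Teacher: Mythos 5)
Your proof is correct. The paper does not actually prove this lemma --- it simply cites \cite{Chen-Zhang} --- and your argument (writing the increment as a linear combination $\sum_i a_i e_i$ of independent sub-Gaussian variables, factorizing the moment generating function via \eqref{e1}, and applying the Chernoff bound with the optimal $\lambda$ to get the two-sided tail \eqref{e41}) is precisely the standard proof that the citation stands in for; your remark that the zero-mean assumption on $e_i$ is needed for the uncentered tail bound is also consistent with the paper's noise model.
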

\begin{lemma}\label{lem:4.6}
Let $C_1>0$ and $K_1>0$ be two constants, and $Z$ be any random variable satisfying 
\ben
\mathbb{P}(|Z|>\alpha (1+z))\le C_1\,{\rm exp} \Big(-\frac{z^2}{K_1^2}\Big) \ \ \forall\,\alpha>0, ~~z\ge 1\,,
\een
then there exists a constant ${C(C_1,K_1)>0}$ depending on $C_1$ and $K_1$ such that 
$$\|Z\|_{\psi_2}\le C(C_1,K_1)\,\alpha\,.$$ 
\end{lemma}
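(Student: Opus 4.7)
The plan is to verify the defining property of the Orlicz norm by bounding the moment generating function of $Z^2$. Since $\|Z\|_{\psi_2}\le C$ is equivalent to $\mathbb{E}[\m{exp}(Z^2/C^2)-1]\le 1$, it suffices to produce a constant $C=C(C_1,K_1)$, depending only on the stated data, for which $\mathbb{E}[\m{exp}(Z^2/(C\alpha)^2)]\le 2$.

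First I would recast the hypothesis in a form suited to integration. Substituting $t=\alpha(1+z)$, so that $z\ge 1$ corresponds to $t\ge 2\alpha$, the tail estimate becomes $\mathbb{P}(|Z|>t)\le C_1\m{exp}(-(t/\alpha-1)^2/K_1^2)$ for $t\ge 2\alpha$. On this range the elementary bound $t/\alpha-1\ge t/(2\alpha)$ yields the cleaner Gaussian-type estimate $\mathbb{P}(|Z|>t)\le C_1\m{exp}(-t^2/(4K_1^2\alpha^2))$, which is exactly the shape needed to pair with the Orlicz weight $\m{exp}(t^2/(C\alpha)^2)$.

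Next, I would apply the layer-cake identity $\mathbb{E}[\phi(|Z|)]=\int_0^\infty \phi'(t)\,\mathbb{P}(|Z|>t)\,dt$ with $\phi(t)=\m{exp}(t^2/(C\alpha)^2)-1$, and split the resulting integral at the threshold $t=2\alpha$. On $[0,2\alpha]$ the crude bound $\mathbb{P}(|Z|>t)\le 1$ together with the substitution $u=t^2/(C\alpha)^2$ evaluates the contribution exactly to $\m{exp}(4/C^2)-1$, which vanishes as $C\to\infty$. On $(2\alpha,\infty)$ the integrand is proportional to $t\,\m{exp}(-\rho t^2/\alpha^2)$ with $\rho=1/(4K_1^2)-1/C^2>0$ whenever $C>2K_1$; a further substitution gives a closed-form bound of order $C_1/(\rho C^2)\cdot\m{exp}(-4\rho)$.

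The last step is to fix $C=C(C_1,K_1)$ large enough that both pieces together are at most $1$. Taking, for instance, $C\ge 4K_1$ ensures $\rho\ge 3/(16K_1^2)$ is bounded below, after which the $1/C^2$ prefactor in the tail contribution absorbs the constant $C_1$, while the boundary contribution $\m{exp}(4/C^2)-1$ can be made arbitrarily small by further enlarging $C$. I anticipate no genuine difficulty beyond bookkeeping of constants, since this is essentially the standard equivalence between a Gaussian-type tail bound and the $\psi_2$-Orlicz norm; the one point that requires care is to check that the chosen $C$ truly depends only on $C_1$ and $K_1$ and is independent of $\alpha$, which is transparent from the scaling $t\sim \alpha$ already baked into the hypothesis.
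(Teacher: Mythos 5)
Your argument is correct. Note that the paper itself gives no proof of this lemma; it simply cites \cite{Chen-Zhang}, so there is nothing internal to compare against, but your proof is the standard layer-cake (Fubini) argument establishing the equivalence between a Gaussian tail bound and a finite $\psi_2$-Orlicz norm, essentially Lemma 2.2.1 of van der Vaart--Wellner specialized to the shifted threshold $\alpha(1+z)$. The two key steps both check out: the reduction $t/\alpha-1\ge t/(2\alpha)$ for $t\ge 2\alpha$ converts the hypothesis into a clean bound $\mathbb{P}(|Z|>t)\le C_1\exp\bigl(-t^2/(4K_1^2\alpha^2)\bigr)$ on the tail range, and the split of $\int_0^\infty\phi'(t)\,\mathbb{P}(|Z|>t)\,dt$ at $t=2\alpha$ yields $\exp(4/C^2)-1$ plus a term of order $C_1/(\rho C^2)$ with $\rho=1/(4K_1^2)-1/C^2$, both of which are driven below $1/2$ by choosing $C$ large depending only on $C_1$ and $K_1$. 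You are also right to read the hypothesis as fixing a single scale $\alpha$ (the literal ``$\forall\,\alpha>0$'' in the statement would force $Z=0$ a.s.); this is how the lemma is invoked in the proof of Theorem \ref{thm:4.1}, with $\alpha=\lambda_n^{1/2}\rho_0$, and your conclusion correctly scales linearly in $\alpha$ with a constant independent of it.
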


\begin{theorem}\label{thm:4.1}
{Assume Assumption \ref{Assumption2} is fulfilled.} Let $\rho_0=\|f^*\|_{X}+\sigma n^{-1/2}$, and $f_n\in X$ be the solution of the minimization \eqref{p1}. If we take
$\lambda_n^{1/2+\gamma/4}= O(\sigma n^{-1/2}\rho_0^{-1})$,
then there exists a constant $C>0$ such that
\ben
\mathbb{P}(\|Sf_n-Sf^*\|_n\ge \lam_n^{1/2}\rho_0z)\le 2\,e^{-Cz^2} 
\q {\rm and} \q \mathbb{P}(\|f_n\|_{X}\ge \rho_0z)\le 2\,e^{-Cz^2}.
\een
{More over, with the same assumptions and notations in Theorem \ref{thm:2.1}, we have, 
\begin{equation}\label{w-con2}
\mathbb{P}(\|f_n-f^*\|_{Z'}\ge \lambda_n^{1/4}\rho_0z)\le 2\,e^{-Cz^2}.
\end{equation}}
\end{theorem}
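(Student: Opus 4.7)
The plan is to combine the variational characterization of $f_n$ with a two-parameter peeling argument on the sub-Gaussian empirical process $E_n(v) = (e, Sv)_n$, and then to deduce the $Z'$-estimate by reusing the interpolation inequality that appeared at the end of Theorem \ref{thm:2.1}.

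First, testing the variational equation \eqref{p7} with $v = f_n - f^*$, inserting $m_i = (Sf^*)(x_i) + e_i$, and applying Young's inequality yields the pivotal identity
$$\tfrac{\lambda_n}{2}\|f_n - f^*\|_X^2 + \|Sf_n - Sf^*\|_n^2 \le \tfrac{\lambda_n}{2}\|f^*\|_X^2 + (e, S(f_n - f^*))_n.$$
Since $\|f^*\|_X \le \rho_0$, the deterministic part is already of the desired size; what remains is a uniform control of the random term over plausible minimizers. By Lemma \ref{lem:4.5}, $\{E_n(v)\}$ is sub-Gaussian with respect to $\sd(v_1,v_2) = \sigma n^{-1/2}\|Sv_1 - Sv_2\|_n$, so Lemma \ref{lem:4.2} will translate any covering-entropy bound on the shells into a $\psi_2$-bound on the supremum of $E_n$.

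Second, since $f_n$ is itself random, I would stratify the candidate set of $v = f - f^*$ into shells
$$B_{j,k} = \bigl\{ v \in X : 2^{j-1}\rho_0 \le \|v\|_X \le 2^j\rho_0,\ 2^{k-1}\lambda_n^{1/2}\rho_0 \le \|Sv\|_n \le 2^k\lambda_n^{1/2}\rho_0 \bigr\}$$
and apply Lemma \ref{lem:4.2} inside each shell. Assumption \ref{Assumption2}, combined with the trivial bound $\|\cdot\|_n \le \|\cdot\|_{L^\infty(\Omega)}$, gives covering entropy $\le C(2^j\rho_0)^{\gamma}\varepsilon^{-\gamma}$, so the entropy integral converges (here $\gamma<2$ is essential) and yields a $\psi_2$-bound of order $\sigma n^{-1/2}(2^j\rho_0)^{\gamma/2}(2^k\lambda_n^{1/2}\rho_0)^{1-\gamma/2}$ on $\sup_{v\in B_{j,k}} E_n(v)$. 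The scaling hypothesis $\lambda_n^{1/2+\gamma/4} = O(\sigma n^{-1/2}\rho_0^{-1})$ is precisely what makes this quantity absorbable into $\tfrac{\lambda_n}{2}(2^j\rho_0)^2 + (2^k\lambda_n^{1/2}\rho_0)^2$ as soon as $2^j, 2^k \gtrsim z$. Applying Lemma \ref{lem:4.6} shell by shell and summing a geometric series over $j,k$ produces
$$\mathbb{P}\bigl(\|f_n - f^*\|_X \ge \rho_0 z\ \text{ or }\ \|Sf_n - Sf^*\|_n \ge \lambda_n^{1/2}\rho_0 z\bigr) \le 2\,e^{-Cz^2},$$
from which the first two claimed bounds follow (the bound on $\|f_n\|_X$ uses the triangle inequality $\|f_n\|_X \le \|f_n - f^*\|_X + \|f^*\|_X$ and absorbs the constant into $C$).

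For the $Z'$-estimate I would reuse the interpolation derived at the end of Theorem \ref{thm:2.1},
$$\|f_n - f^*\|_{Z'}^2 \le \|Sf_n - Sf^*\|_{L^2(\Omega)}\,\|f_n - f^*\|_X,$$
and invoke Assumption \ref{Assumption1}(1) to bound $\|Sf_n - Sf^*\|_{L^2(\Omega)}^2 \le C(\|Sf_n - Sf^*\|_n^2 + n^{-\beta}\|f_n - f^*\|_X^2)$. Under the inherited hypothesis $n^{-\beta} \le \lambda_n$, this is controlled by $C\lambda_n \rho_0^2 z^2$; combining with the already established tail of $\|f_n - f^*\|_X$ via a union bound yields the $\lambda_n^{1/4}\rho_0 z$ bound with the same Gaussian-type decay. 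The main obstacle is the two-parameter peeling step: the shells must be calibrated so that the shellwise $\psi_2$-bounds sum to a convergent geometric series while the exponential-in-$z^2$ tail is preserved — this delicate balance between the scales $\|v\|_X$ and $\|Sv\|_n$ is precisely what forces the exponent $1/2 + \gamma/4$ in the hypothesis on $\lambda_n$.
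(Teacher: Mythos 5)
Your proposal is correct and follows essentially the same route as the paper: the basic energy inequality from the optimality of $f_n$, a two-parameter peeling over shells in $\|v\|_X$ and $\|Sv\|_n$, the sub-Gaussianity of $(e,S\cdot)_n$ (Lemma \ref{lem:4.5}) combined with the maximal inequality (Lemma \ref{lem:4.2}) and the entropy bound of Assumption \ref{Assumption2} to control each shell, Lemma \ref{lem:4.6} to convert the tail bound into a $\psi_2$-estimate, and the interpolation inequality from Theorem \ref{thm:2.1} for the $Z'$ bound. The only cosmetic differences are that you derive the starting inequality from the Euler--Lagrange equation rather than directly from $J(f_n)\le J(f^*)$, and you fold the factor $(1+z)$ into the shell indices rather than into the base scale $\delta$.
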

\begin{proof} By using the estimate \eqref{e3}, it suffices to prove
\beq\label{xx2}
\|\|Sf_n-Sf^*\|_n\|_{\psi_2}\le C\lam_n^{1/2}\rho_0 \q {\rm and} \q  \|\|f_n\|_X\|_{\psi_2}\le C\rho_0.
\eeq
Because of similarity, we will prove only the first estimate in \eqref{xx2} by the peeling argument. 
It follows from \eqref{p1} that
\beq\label{g1}
\|Sf_n-Sf^*\|^2_n + \lambda_n \|f_n\|_{X}^2 \leq 2(e,Sf_n-Sf^*)_n + \lambda_n \|f^*\|_{X}^2.
\eeq
Let $\delta >0,\ \rho>0$ be two constants to be determined later, and we set for $i,j\ge 1$, 
\beq\label{g2}
A_0=[0,\delta), \q A_i=[2^{i-1}\delta,2 ^i\delta), \q B_0=[0,\rho), \q B_j=[2^{j-1}\rho,2^j\rho)\,.
\eeq
For $i,j\ge 0$, we further define
\ben
F_{ij}= \{v \in X:~  \|Sv\|_n \in A_i ~,~ \|v\|_{X} \in B_j \},
\een
then we can readily see 
\beq\label{g3}
\mathbb{P}(\|Sf_n-Sf^*\|_n>\de)\le\sum_{i=1}^\infty\sum_{j=0}^\infty \mathbb{P}(f_n-f^*\in F_{ij}).
\eeq
Now we estimate $\mathbb{P}(f_n-f^*\in F_{ij})$ for each pair $\{i, j\}$. 
By Lemma \ref{lem:4.5}, we know 
$\{(e,Sv)_n:v\in X\}$ is a sub-Gaussian random process with respect to the semi-distance 
$\sd(f,v)$. With this semi-distance, it is easy to see that 
$
\diam(F_{ij})\le  2\sigma n^{-1/2}\cdot 2^i\de\,, 
$
then we can deduce by using Lemma \ref{lem:4.2} that 
\ben
\|\sup_{f-f^*\in F_{ij}}|(e,Sf-Sf^*)_n|\|_{\psi_2}&\le& K\int^{\sigma n^{-1/2}\cdot 2^{i+1}\de}_0\sqrt{\log N\left(\frac\vep 2,F_{ij}, \sd\right)}\,d\vep\nn\\
&=&K\int^{\sigma n^{-1/2}\cdot 2^{i+1}\de}_0\sqrt{\log N\left(\frac\vep{2\sigma n^{-1/2}},F_{ij}, \|S\cdot\|_n\right)}\,d\vep.
\een
By Assumption\,\ref{Assumption2}, we have the estimate for the covering entropy
\ben
&&\log N\left(\frac\vep{2\sigma n^{-1/2}},F_{ij}, \|S\cdot\|_n\right)\le\log N(\frac\vep{2\sigma n^{-1/2}},F_{ij}, \|S\cdot\|_{L^\infty(\Om)})\\
&=&\log N(\frac\vep{2\sigma n^{-1/2}},S(F_{ij}), \|\cdot\|_{L^\infty(\Om)})
\le C\Big(\frac{2\sigma n^{-1/2}\cdot2^j\rho}{\vep}\Big)^{\gamma},
\een
where we have used the fact that $S(F_{ij})$ is included in the ball in $Y$ of radius $C(2^j\rho)$ since $S:X\to Y$ is a bounded operator.
Using this, we can further derive 
\be
\|\sup_{f-f^*\in F_{ij}}|(e,Sf-Sf^*)_n\|_{\psi_2}&\le&K\int^{\sigma n^{-1/2}\cdot 2^{i+1}\de}_0\Big(\frac{2\sigma n^{-1/2}\cdot2^j\rho}{\vep}\Big)^{\gamma/2}\,d\vep\nn\\
&=&C\sigma n^{-1/2}(2^j\rho)^{\gamma/2}(2^i\de)^{1-\gamma/2}.\label{g5}
\ee
Then by using the estimates \eqref{g1} and \eqref{e3}, we have for $i,j \geq 1$, 
\ben
\mathbb{P}(f_n-f^*\in F_{ij})& \leq & \mathbb{P}\Big(2^{2(i-1)}\delta^2 + \lambda_n 2^{2(j-1)}\rho^2 \leq 2 \mathop {\sup}\limits_{f-f^* \in F_{ij}}|(e,f-f^*)_n|
+ \lambda_n \rho^2_0 \Big) \\
& =& \mathbb{P}\Big(2 \mathop {\sup}\limits_{f-f^* \in F_{ij}}|(e,Sf-Sf^*)_n|\ge 2^{2(i-1)}\delta^2 + \lambda_n 2^{2(j-1)}\rho^2 - \lambda_n \rho^2_0\Big)\\
&\le &2\exp \Big[- \frac{1}{C\sigma^2 n^{-1}} \Big(\frac{2^{2(i-1)}\delta^2 + \lambda_n 2^{2(j-1)}\rho^2-\lambda_n \rho^2_0}{ (2^i\delta)^{1-\gamma/2} (2^j\rho)^{\gamma/2}} \Big)^2\Big].
\een
Now for $z \ge 1$, we take
$ 
\delta^2=\lambda_n\rho_0^2 (1+z)^2$, 
$\rho=\rho_0\,, 
$ 
then with the choice that $\lambda_n^{\frac 12+\frac \gamma 4}=O(\sigma n^{-1/2}\rho_0^{-1})$ 
and direct computing, we readily obtain for $i,j \geq 1$ that 
\beq \label{eq:Pij}
\mathbb{P}(f_n-f^*\in F_{ij}) \le  2\exp \Big[ - C 
\Big(\frac{2^{2(i-1)}z(1+z) + 2^{2(j-1)}}{ (2^i (1+z))^{1-\gamma/2} (2^j)^{\gamma/2}} \Big)^2\Big].
\eeq

To simplify the above estimate, 
we use Young's inequality that $ab\le a^p/p+ b^q/q$ for any $a,b>0$ and $p,q>1$ such that 
$p^{-1}+q^{-1}=1$ to obtain 
$$(2^i (1+z))^{1-\gamma/2} (2^j)^{\gamma/2}\le C( (1+z)2^i+2^j).$$ 
Therefore we get from \eqref{eq:Pij} for $i, j\ge 1$ that 
\ben
\mathbb{P}(f_n-f^*\in F_{ij})\le 2\exp \left[ - C (2^{2i} z^2 + 2^{2j}) \right].
\een
Similarly, one can show for $i\geq 1, j=0$ that 
\ben
\mathbb{P}(f_n-f^*\in F_{i0}) \le 2\exp \left[- C (2^{2i} z^2) \right].
\een
Collecting the above estimates for all $i, j\ge 0$ and using the facts that 
$$
\sum^\infty_{j=1} \m{exp} \big(-C(2^{2j})\big) \le \m{exp} ({-C})< 1\q \m{and} \q  
\sum^\infty_{i=1} \m{exp} \big({-C(2^{2i}z^2)}\big) \le \m{exp} ({-Cz^2}),
$$ 
we come to the conclusion that 
\ben
\sum_{i=1}^\infty\sum_{j=0}^\infty \mathbb{P}(f_n-f^*\in F_{ij})\le2\sum_{i=1}^\infty\sum_{j=1}^\infty 
\m{exp} ({- C (2^{2i} z^2 + 2^{2j})})+2\sum^\infty_{i=1} \m{exp} ({- C (2^{2i} z^2)}). 
\een
The above estimate can be further bounded by $4 \m{exp} ({-Cz^2})$. 
Using this, we get from \eqref{g3} that 
\beq\label{g7}
\mathbb{P}(\|Sf_n-Sf^*\|_n>\lam_n^{1/2}\rho_0(1+z))\le 4 \,\m{exp} ({-Cz^2})\ \ \ \ \forall z\ge 1.
\eeq
This, along with Lemma \ref{lem:4.6},  implies that $\|\|Sf_n-Sf^*\|_n\|_{\psi_2} \leq C \lambda_n^{1/2}\rho_0$, 
which is the first estimate in \eqref{xx2}. The second estimate is similar to the first one by taking $i\geq 0$ and $j\geq 1$ in the summation above \eqref{g7}. Using the very same technique in Theorem \ref{thm:2.1}, one could directly get \eqref{w-con2}.
\end{proof} 

\subsection{Convergence of the discrete solutions}\label{sec:discrete}

In this section we consider the approximation to the optimal control problem \eqref{p1}, i.e., 
\ben
\mathop {\rm min}\limits_{f\in X}         \|Sf-m\|^2_n+\lambda_n \|f\|_{X}^2.
\een
We can directly verify that the solution $f_n\in X$ satisfies the weak formulation
\beq\label{b1}
\lambda_n (f_n,v)_X+(Sf_n,Sv)_n =(m,Sv)_n\ \ \ \ \forall v\in X\,.
\eeq

Let $V_h\subset X$ and $Y_h\subset Y$ be two discrete function spaces (e.g., finite element spaces) 
with dimensions $N_h$ and $M_h$ respectively, and $S_h: X\to Y_h\subset Y$ 
be the discrete approximation of the operator $S:X\to Y$. 
We make the following standard assumptions on the discretization space $V_h$ 
and the approximation operator $S_h$.

\begin{assu}\label{Assumption3} For the discrete operator $S_h: X\to Y_h\subset Y$, 

(1) there exists an error estimate $e(h)$ such that the discrete operator $S_h$ satisfies 
\ben
\|Sf-S_hf\|^2_n\leq Ce(h)\|f\|^2_X \q \forall\,f\in X\,.
\een

(2) For any $f\in X$, there exists $v_h\in V_h$ such that
\ben
\lambda_n \|f-v_h\|^2_{X} + \|S_hf-S_hv_h\|^2_n\leq C(\lambda_n+e(h))\|f\|^2_X.
\een
\end{assu}

We can now look for the discrete solution to the problem \eqref{p1}: 
\ben
\mathop {\rm min}\limits_{f_h\in V_h}         \|S_hf_h-m\|^2_n+\lambda_n \|f_h\|_{X}^2.
\een
Obviously, $f_h$ satisfies the weak formulation:
\beq\label{d1}
\lambda_n (f_h,v_h)_X+(S_hf_h,S_hv_h)_n =(m,S_hv_h)_n\ \ \ \ \forall v_h\in V_h.
\eeq

\subsubsection{Convergence for noisy data from random variables with bounded variance}
We study in this section the expectational convergence of the discrete solution to \eqref{d1} 
in the case {\bf (R1)} for the data \eqref{eq:data}, 
with the main results stated below.
\begin{theorem}\label{thm:3.1}
{Assume Assumption \ref{Assumption1} and \ref{Assumption3} are fulfilled.} Let $f_h\in V_h$ be the solution of \eqref{d1}.
Then there exist constants $\lambda_0 > 0$ and $C>0$ such that for any $\lambda_n \leq \lambda_0$,
\begin{eqnarray}
{ } \qq \mathbb{E}\big[\|Sf^*- S_hf_h\|_n^2\big] &\le& C(\lambda_n+e(h))\|f^*\|^2_{X}+C\Big[1+\frac{e(h)}{\lam_n}+\frac{N_h e(h)}{\lambda^{1-1/\alpha}_n}\Big]\frac{\sigma^2}{n\lambda_n^{1/\alpha}}\,,  \label{d2}\\
{ } \qq  \mathbb{E}\big[\|f^*- f_h\|_{X}^2\big] &\le& C\frac{\lambda_n+e(h)}{\lambda_n}\|f^*\|^2_{X}+C\Big[1+\frac{e(h)}{\lam_n}+\frac{N_h e(h)}{\lambda^{1-1/\alpha}_n}\Big]\frac{\sigma^2}{n\lambda_n^{1+1/\alpha}}\label{d2-2}. 
\end{eqnarray}
%
{More over, with the same assumption and notations in Theorem \ref{thm:2.1}, we have, 
\begin{eqnarray}
\mathbb{E}\big[\|f^*- f_h\|_{Z'}^2\big] &\le & C(\lambda^{1/2}_n+e^{1/2}(h))\frac{\lambda_n+e(h)}{\lambda_n}\|f^*\|^2_{X}\nn\\
&+& C(\lambda^{1/2}_n+e^{1/2}(h))\Big[1+\frac{e(h)}{\lam_n}+\frac{N_h e(h)}{\lambda^{1-1/\alpha}_n}\Big]\frac{\sigma^2}{n\lambda_n^{1/\alpha}}\label{w-con-d1}.  
\end{eqnarray}}
In particular, if $e(h)\le C\lambda_n$ and $N_h e(h)\leq C\lambda_n^{1-1/\alpha}$, we have
\begin{align} 
\mathbb{E}\big[\|Sf^*- S_hf_h\|_n^2\big]&\le C\lambda_n\|f^*\|_{X}^2+{C\sigma^2}/({n\lambda_n^{1/\alpha}}), 
\label{d3}\\
\mathbb{E}\big[\|f^*- f_h\|_{X}^2\big]&\le C\|f^*\|_{X}^2+{C\sigma^2}/({n\lambda_n^{1+1/\alpha}}),\\ 
\mathbb{E}\big[\|f^*- f_h\|_{Z'}^2\big] &\le C\lambda_n^{1/2}\|f^*\|_{X}^2+{C\sigma^2}/({n\lambda_n^{1/2+1/\alpha}}).
\end{align}
%
%
\end{theorem}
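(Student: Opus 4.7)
The plan is to adapt the proof of Theorem~\ref{thm:2.1} to the discrete setting through a quasi-optimal splitting combined with a refined stochastic estimate that accounts for the mismatch between $S$ and $S_h$. Let $w_h\in V_h$ denote the quasi-best approximation to $f^*$ provided by Assumption~\ref{Assumption3}(2), and introduce the discrete energy norm $\lla v\rra_{h,\lambda_n}^2:=\lambda_n\|v\|_X^2+\|S_hv\|_n^2$. Testing the discrete weak form \eqref{d1} with $v_h=f_h-w_h$ and inserting $m_i=(Sf^*)(x_i)+e_i$ from \eqref{eq:data}, Cauchy--Schwarz and Young's inequality together with Assumption~\ref{Assumption3}(1)--(2) yield the energy estimate
\[
\lla f_h-w_h\rra_{h,\lambda_n}^2\ \le\ C(\lambda_n+e(h))\|f^*\|_X^2+C\sup_{v_h\in V_h}\frac{(e,S_hv_h)_n^2}{\lla v_h\rra_{h,\lambda_n}^2}.
\]

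The main step is to bound the stochastic supremum. I would split $(e,S_hv_h)_n=(e,Sv_h)_n+(e,(S_h-S)v_h)_n$. For the first piece, the triangle inequality combined with Assumption~\ref{Assumption3}(1) gives $\lla v_h\rra_{\lambda_n}^2\le C(1+e(h)/\lambda_n)\lla v_h\rra_{h,\lambda_n}^2$, so this piece is at most $C(1+e(h)/\lambda_n)$ times the supremum $\sup_{v\in X}(e,Sv)_n^2/\lla v\rra_{\lambda_n}^2$ already estimated in the proof of Theorem~\ref{thm:2.1}, contributing $C(1+e(h)/\lambda_n)\sigma^2/(n\lambda_n^{1/\alpha})$. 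For the second piece, I would mimic Lemma~\ref{lemma:2.3} by introducing the discrete generalized eigenvalue problem $(\psi,v)_X=\rho^h(S_h\psi,S_hv)_n$ on $V_h$ with eigenfunctions $\{\psi_k^h\}_{k=1}^{N_h}$ orthonormal under $(S_h\cdot,S_h\cdot)_n$ so that $\|\psi_k^h\|_X^2=\rho_k^h$, expand $v_h=\sum_k v_k\psi_k^h$, and apply Cauchy--Schwarz as in the proof of \eqref{x2}. Assumption~\ref{Assumption3}(1) yields $\mathbb{E}[(e,(S_h-S)\psi_k^h)_n^2]\le Ce(h)\rho_k^h\sigma^2/n$, and the elementary inequality $\rho_k^h/(1+\lambda_n\rho_k^h)\le\lambda_n^{-1}$ gives the bound $CN_he(h)\sigma^2/(n\lambda_n)$, which rearranges to precisely the $N_he(h)/\lambda_n^{1-1/\alpha}$ term in the bracket of \eqref{d2}.

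Combining the energy estimate with the stochastic bound then proves \eqref{d2} after the triangle inequality $\|Sf^*-S_hf_h\|_n\le\|Sf^*-S_hf^*\|_n+\|S_h(f^*-w_h)\|_n+\|S_h(w_h-f_h)\|_n$ and Assumption~\ref{Assumption3}, while \eqref{d2-2} follows from $\|f^*-f_h\|_X^2\le 2\|f^*-w_h\|_X^2+2\lambda_n^{-1}\lla f_h-w_h\rra_{h,\lambda_n}^2$. For the weak estimate \eqref{w-con-d1}, I would reuse the interpolation inequality $\|v\|_{Z'}^2\le\|Sv\|_{L^2(\Omega)}\|v\|_X$ established in Theorem~\ref{thm:2.1} with $v=f^*-f_h$, dominate $\|S(f^*-f_h)\|_{L^2(\Omega)}^2$ by $C(\|Sf^*-S_hf_h\|_n^2+e(h)\|f_h\|_X^2+n^{-\beta}\|f^*-f_h\|_X^2)$ via Assumptions~\ref{Assumption1}(1) and \ref{Assumption3}(1), and close by Cauchy--Schwarz in expectation against the $X$-norm estimate \eqref{d2-2}. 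The main obstacle is securing the sharp $N_he(h)/\lambda_n^{1-1/\alpha}$ factor: a naive pointwise bound $|(e,(S_h-S)v_h)_n|\le\|e\|_n\,\|(S_h-S)v_h\|_n$ only gives $ne(h)/\lambda_n^{1-1/\alpha}$, proportional to the number of data points rather than the much smaller dimension $N_h$ of the finite element space, so the discrete eigenbasis expansion is indispensable.
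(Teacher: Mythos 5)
Your proposal is correct and reaches all three estimates, but it is organized differently from the paper's proof. The paper does not run a one-step energy argument against $f^*$: it compares the discrete solution $f_h$ with the continuous regularized solution $f_n$ of \eqref{b1} through a Strang-type error identity
$a_h(f_h-v_h,w_h)=a_h(f_n-v_h,w_h)+((S-S_h)f_n,S_hw_h)_n+(Sf^*-Sf_n,(S_h-S)w_h)_n+(e,(S_h-S)w_h)_n$,
bounds the two deterministic consistency terms using Assumption \ref{Assumption3}(1), and then imports $\mathbb{E}\big[\|Sf^*-Sf_n\|_n^2\big]$ and $\mathbb{E}\big[\|f_n\|_X^2\big]$ from Theorem \ref{thm:2.1}; that is where its bracket $1+e(h)/\lam_n$ comes from. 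Your route instead compares $f_h$ directly with the quasi-approximant $w_h$ of $f^*$ from Assumption \ref{Assumption3}(2) and produces the same bracket from the norm equivalence $\lla v_h\rra_{\lam_n}^2\le C(1+e(h)/\lam_n)\lla v_h\rra_{h,\lam_n}^2$ applied to the piece $(e,Sv_h)_n$; this is legitimate and arguably cleaner, since it bypasses the intermediate object $f_n$ entirely. On the decisive stochastic term you and the paper agree on the essential idea: a finite-dimensional basis expansion is what converts the naive factor $n$ into $N_h$. The paper simply takes an $L^2$-orthonormal basis of $V_h$ and uses $\lambda_n\|w_h\|_{L^2(\Om)}^2\le\|w_h\|_{a_h}^2$, which is all your computation actually exploits as well, since your final step $\rho_k^h/(1+\lam_n\rho_k^h)\le\lam_n^{-1}$ discards the eigenvalue information. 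One small repair: the discrete generalized eigenproblem $(\psi,v)_X=\rho^h(S_h\psi,S_hv)_n$ need not admit a basis orthonormal in $(S_h\cdot,S_h\cdot)_n$, because this form can be degenerate on $V_h$ (e.g.\ when $N_h>n$ or when $S_hv_h$ vanishes at all data points for some $v_h\ne 0$); replacing the eigenbasis by any $X$-orthonormal basis of $V_h$ removes the issue without changing the bound. Your treatment of \eqref{w-con-d1} via $\|v\|_{Z'}^2\le\|Sv\|_{L^2(\Omega)}\|v\|_X$ and Assumptions \ref{Assumption1}(1), \ref{Assumption3}(1) coincides with the paper's.
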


\begin{proof}
For any $f,v\in X$, we denote $a_h(f,v)=\lambda_n (f,v)_X+(S_h f,S_h v)_n$ and $\|f\|_{a_h}^2=a_h(f,f)$. For any $w_h\in V_h$, by taking $v=w_h$ in \eqref{b1} and $v_h=w_h$ in \eqref{d1}, we readily obtain
\ben
a_h(f_h-v_h,w_h)&=&a_h(f_n-v_h,w_h)+((S-S_h)f_n,S_hw_h)_n+(Sf^*-Sf_n,(S_h-S)w_h)_n\\
&+&\,(e,(S_h-S)w_h)_n :\equiv a_h(f_n-v_h,w_h) + F(w_h)
\quad \forall v_h,w_h\in V_h.
\een
By the triangle inequality, we can further derive  
\beq
\|f_n-f_h\|_{a_h}\le C\inf_{v_h\in V_h}\|f_n-v_h\|_{a_h}+C\sup_{w_h\in V_h}\frac{|F(w_h)|}{\|w_h\|_{a_h}}\,.\label{pa0}
\eeq
But from Assumption \ref{Assumption3}\,(1), we have
\begin{align}
\sup_{w_h\in V_h} \frac{|((S-S_h) f_n,S_h w_h)_n|}{\|w_h\|_{a_h}} &\leq \|S f_n-S_h f_n\|_n \leq Ce(h)^{1/2}\|f_n\|_{X},
\label{pa1}\\
\sup_{w_h\in V_h} \frac{|(Sf^* -S f_n, (S_h-S)w_h)_n|}{\|w_h\|_{a_h}} &\leq C\|S f^*-S f_n\|_n\frac{e(h)^{1/2}}{\lambda_n^{1/2}}.
\label{pa2}
\end{align}

Now we estimate $\mathbb{E}(\sup_{w_h\in V_h} {|(e,S w_h-S_h w_h)_n|^2}/{\|w_h\|^2_{a_h}})$. Let $\{\psi_k\}_{k=1}^{N_h}$ be the orthogonal basis of $V_h$ (with $N_h=$ dim($V_h$)) such that $(\psi_i,\psi_j)=\delta_{ij}$. 
Then for any $w_h\in V_h$, we have $w_h=\sum_{j=1}^{N_h}(w_h,\psi_j)\psi_j$, and $\|w_h\|^2_{L^2(\Omega)}=\sum_{j=1}^{N_h}(w_h,\psi_j)^2$. Applying the Cauchy-Schwarz inequality,
\ben
(e,(S-S_h)w_h)_n^2 &\leq&\frac{1}{n^2} \sum_{j=1}^{N_h}(w_h,\psi_j)^2 \sum_{j=1}^{N_h} \Big(\sum_{i=1}^{n} e_i (S-S_h)\psi_j(x_i)\Big)^2 \\
&=&\frac{1}{n^2} \|w_h\|^2_{L^2(\Omega)} \sum_{j=1}^{N_h} \Big(\sum_{i=1}^{n} e_i (S-S_h)\psi_j(x_i)\Big)^2, 
\een
we derive 
\be
& &\mathbb{E}\Big(\sup_{w_h\in V_h} \frac{|(e,S w_h-S_h w_h)_n|^2}{\|w_h\|^2_{a_h}}\Big)\leq \frac{1}{\lambda_n n^2}   \sum_{j=1}^{N_h} \mathbb{E}\Big(\sum_{i=1}^{n} e_i (S-S_h)\psi_j(x_i)\Big)^2 \label{yy1}\\
&&\hskip2cm= \frac{1}{\lambda_n n} \sum_{j=1}^{N_h} \sigma^2\|(S-S_h)\psi_j\|^2_n
\leq C\frac{\sigma^2}{\lambda_n n} N_h e(h).\nn
\ee
This completes the desired estimates by substituting \eqref{pa1}, \eqref{pa2}, \eqref{yy1} into \eqref{pa0} and using Assumption \ref{Assumption3}\,(2) and Theorem \ref{thm:2.1}. 

With same notations in Theorem \ref{thm:2.1} and apply the estimate therein, we have that
\ben
\|f^*-f_h\|^2_{Z'} & \leq & \|Sf^*-Sf_h\|_{L^2(\Omega)}\|f^*-f_h\|_X \\
& \leq & C\left(\|Sf^*-Sf_h\|_{n}+n^{-\beta}\|f^*-f_h\|_X\right)\|f^*-f_h\|_X \\
& \leq & C\left(\|Sf^*-S_hf_h\|_{n}+\|S_hf_h-Sf_h\|_{n}+n^{-\beta}\|f^*-f_h\|_X\right)\|f^*-f_h\|_X.
\een
Apply the estimates \eqref{d2}, \eqref{d2-2} and assumption \ref{Assumption3} (1) to the above inequality, we finally have \eqref{w-con-d1}. 
\end{proof}

\subsubsection{Convergence for noisy data being sub-gaussian random variables}
We consider in this subsection the convergence of the discrete solution in the case {\bf (R2)} for the data 
\eqref{eq:data}. 
We start by recalling the following lemma in \cite[Corollary 2.6]{Geer} about the estimation of the covering entropy of finite dimensional subsets.
\begin{lemma}\label{lem:4.7}
Let $G$ be a finite dimensional subspace of $X$ of dimension $N_G>0$ and $G_R=\{f\in G: \|f\|_{X}\le R\}$. Then it holds that 
\ben
N(\vep,G_R,\|\cdot\|_{X})\le (1+4R/\vep)^{N_G}\ \ \forall \vep>0.
\een
\end{lemma}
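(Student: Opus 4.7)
The plan is to establish this classical inequality via a volume-packing argument in the finite-dimensional normed space $(G, \|\cdot\|_X)$, using the fact that on $G \cong \R^{N_G}$ any norm ball of radius $r$ has Lebesgue-type volume proportional to $r^{N_G}$. The standard trick is to pass from a covering bound to a packing bound, where disjointness and volume comparison can be exploited directly.

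First, I would let $\{f_1, \ldots, f_N\} \subset G_R$ be a \emph{maximal} $\vep$-separated subset, meaning $\|f_i - f_j\|_X \geq \vep$ for every $i \neq j$ and no further point of $G_R$ can be added without destroying this property. By maximality the set is automatically an $\vep$-net of $G_R$: every point of $G_R$ lies within distance $\vep$ of some $f_i$. Hence $N(\vep, G_R, \|\cdot\|_X) \leq N$, and the problem reduces to bounding $N$.

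Next I would exploit the separation to obtain disjointness. The open balls $B(f_i, \vep/2) \subset G$ are pairwise disjoint by the triangle inequality, and since every $\|f_i\|_X \leq R$ they all lie inside $B(0, R + \vep/2) \subset G$. Fixing a Lebesgue-type volume on $G$ and using the scaling $\mathrm{vol}(B(x,r)) = r^{N_G}\,\mathrm{vol}(B(0,1))$ on the $N_G$-dimensional space, I would conclude
$$ N \cdot (\vep/2)^{N_G} \leq (R + \vep/2)^{N_G}, $$
which rearranges to $N \leq (1 + 2R/\vep)^{N_G} \leq (1 + 4R/\vep)^{N_G}$, giving the claimed bound.

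The argument is classical and presents no real obstacle; the only conceptual point worth emphasising is the double role played by a maximal $\vep$-separated set, which simultaneously serves as an $\vep$-net (for the covering bound) and provides enough room for half-radius balls to remain disjoint (for the volume bound). The slack from the sharper constant $1 + 2R/\vep$ to the stated $1 + 4R/\vep$ then comes for free.
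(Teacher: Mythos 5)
Your argument is correct. The paper does not prove this lemma itself --- it simply quotes it from van de Geer's book (Corollary 2.6 there) --- and your maximal $\vep$-separated set plus volume-comparison argument is precisely the standard proof of that quoted result; it even yields the slightly sharper bound $(1+2R/\vep)^{N_G}$, of which the stated $(1+4R/\vep)^{N_G}$ is a trivial weakening.
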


\begin{lemma}\label{lem:4.8}
{Assume Assumption \ref{Assumption3} is fulfilled.} Let $G_h:=\{w_h\in V_h: \|w_h\|_{a_h}\le 1\}$. Assume that $e(h)\leq C\lam_n$ and $N_h e(h)\leq C\lambda_n^{1-\gamma/2}$. Then it holds that 
\ben
\|\,\sup_{w_h\in G_h}|(e, S w_h-S_h w_h)_n|\,\|_{\psi_2}\le C\sigma n^{-1/2}\lam_n^{-\gamma/4}.
\een
\end{lemma}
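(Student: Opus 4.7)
The plan is to apply the maximal inequality (Lemma \ref{lem:4.2}) to the random process $Z_{w_h} := (e, (S-S_h)w_h)_n$ indexed by $G_h$, with the covering entropy controlled via the finite-dimensional estimate of Lemma \ref{lem:4.7}. Since $0 \in G_h$ and $Z_0 = 0$, it suffices to bound $\|\sup_{w_h, v_h \in G_h}|Z_{w_h} - Z_{v_h}|\|_{\psi_2}$, which then dominates $\|\sup_{w_h \in G_h}|Z_{w_h}|\|_{\psi_2}$.

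First I would observe, arguing exactly as in Lemma \ref{lem:4.5} but with $S$ replaced by the bounded linear operator $S - S_h$, that $\{Z_{w_h}\}_{w_h \in G_h}$ is a sub-Gaussian random process with respect to the semi-distance
$$\sd(w_h, v_h) := \sigma n^{-1/2}\|(S-S_h)(w_h - v_h)\|_n.$$
Next, for $w_h \in G_h$ the constraint $\|w_h\|_{a_h}^2 = \lambda_n\|w_h\|_X^2 + \|S_h w_h\|_n^2 \le 1$ yields $\|w_h\|_X \le \lambda_n^{-1/2}$; combined with Assumption \ref{Assumption3}(1) this gives the diameter bound $\diam G_h \le D_0 := C\sigma n^{-1/2} e(h)^{1/2}\lambda_n^{-1/2}$.

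For the entropy estimate I would exploit that $G_h$ sits inside the $X$-ball of radius $\lambda_n^{-1/2}$ of the $N_h$-dimensional subspace $V_h$. Since Assumption \ref{Assumption3}(1) also yields $\sd(w_h, v_h) \le C\sigma n^{-1/2} e(h)^{1/2}\|w_h - v_h\|_X$, a rescaling of Lemma \ref{lem:4.7} gives
$$N(\vep, G_h, \sd) \le (1 + 4D_0/\vep)^{N_h}.$$
Feeding this into Dudley's integral and making the substitution $\vep = D_0 t$ yields
$$\|\sup_{w_h \in G_h}|Z_{w_h}|\|_{\psi_2} \le K\int_0^{D_0}\sqrt{N_h\,\log(1 + 8D_0/\vep)}\, d\vep \le C D_0\sqrt{N_h} = C\sigma n^{-1/2}\lambda_n^{-1/2}\sqrt{N_h\, e(h)},$$
where the integral $\int_0^1 \sqrt{\log(1+8/t)}\, dt$ is a harmless absolute constant. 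The hypothesis $N_h e(h) \le C\lambda_n^{1-\gamma/2}$ then collapses the right-hand side to $C\sigma n^{-1/2}\lambda_n^{-\gamma/4}$, which is the claimed bound; the companion hypothesis $e(h) \le C\lambda_n$ plays the analogous role when bounding $D_0$ in absolute terms.

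The only subtle point is checking that the sub-Gaussianity argument of Lemma \ref{lem:4.5} transfers verbatim with $S - S_h$ in place of $S$, which is routine since the $e_i$'s remain independent sub-Gaussians of parameter $\sigma$ irrespective of which bounded linear operator acts on the argument. Beyond that, the proof is a clean execution of Dudley's entropy integral: the polynomial-in-$(1/\vep)$ entropy of a finite-dimensional ball makes the integral converge to a constant multiple of $D_0\sqrt{N_h}$, after which the stated assumptions do the rest by pure algebra. I do not anticipate any genuine obstacle; the main tactical choice is using the finite-dimensional bound Lemma \ref{lem:4.7} (which brings in $N_h$, matching the hypothesis) rather than Assumption \ref{Assumption2}.
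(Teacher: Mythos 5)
Your proposal is correct and follows essentially the same route as the paper's proof: the sub-Gaussianity of $(e,(S-S_h)w_h)_n$ via Lemma \ref{lem:4.5}, the maximal inequality of Lemma \ref{lem:4.2}, the bound $\hat\sd(w_h,v_h)\le C\sigma n^{-1/2}e(h)^{1/2}\|w_h-v_h\|_X$ from Assumption \ref{Assumption3}(1), and the finite-dimensional covering number of Lemma \ref{lem:4.7} with $R=\lambda_n^{-1/2}$, yielding $C\sigma n^{-1/2}\lambda_n^{-1/2}\sqrt{N_h e(h)}\le C\sigma n^{-1/2}\lambda_n^{-\gamma/4}$. The only (harmless) cosmetic differences are that you keep the tighter diameter $D_0$ rather than the paper's $C\sigma n^{-1/2}$ and retain the logarithm in the entropy integral rather than linearizing it.
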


\begin{proof} By Lemma \ref{lem:4.5} we know that $\{\hat E_n(v_h):=(e,S w_h-S_h w_h)_n\ \forall w_h\in G_h\}$ 
is a sub-Gaussian random process with respect to the semi-distance $\hat\sd(v_h,w_h)=\sigma n^{-1/2}\|(S v_h-S_h v_h)-(S w_h-S_h w_h)\|_n$. 
By Assumption \ref{Assumption3} and the condition that $e(h)\leq C\lam_n$, we derive 
for any $w_h\in G_h$ that $\|S w_h-S_h w_h\|_n\le Ce^{1/2}(h)\|w_h\|_{X}\le Ce^{1/2}(h)\lam_n^{-1/2}\le C$.
This implies that the
diameter of $G_h$ is bounded by $C\sigma n^{-1/2}$.
Now we deduce by the maximal inequality in Lemma \ref{lem:4.2}, 
\beq\label{i1}
\|\,\sup_{w_h\in G_h}|(e,S w_h-S_h w_h)_n|\,\|_{\psi_2}\le K\int_0^{C\sigma n^{-1/2}}\sqrt{\log N\left(\frac\vep 2,G_h,\hat\sd\right)}\ d\vep.
\eeq
By Assumption \ref{Assumption3}, we know 
\ben
\hat\sd(v_h,w_h)\le C\sigma n^{-1/2}e^{1/2}(h)\|v_h-w_h\|_{X}\ \ \forall v_h,w_h\in V_h.
\een
Thus we can see 
\beq\label{i4}
\log N\left(\frac\vep 2,G_h,\hat\sd\right)=\log N\left(\frac\vep{C\sigma n^{-1/2}e^{1/2}(h)},G_h,\|\cdot\|_{X}\right).
\eeq
Now we estimate the covering entropy of $G_h$. 
First, we have $\|w_h\|_{X}\le \lam_n^{-1/2}$ for any $w_h\in G_h$. 
Noting the dimension $N_h$ of $V_h$, we obtain by Lemma \ref{lem:4.7} and \eqref{i4} that
\ben
\log N\left(\frac\vep 2,G_h,\hat\sd\right)\le CN_h(1+{\sigma n^{-1/2}e^{1/2}(h)\lam_n^{-1/2}}/\vep).
\een
Inserting this estimate in \eqref{i1}, 
\ben
\|\,\sup_{v_h\in G_h}|(e,\hat v_h-\Pi_h v_h)_n|\,\|_{\psi_2}&\le&C\int_0^{C\sigma n^{-1/2}}\sqrt{CN_h(1+\sigma n^{-1/2}e^{1/2}(h)\lam_n^{-1/2}/\vep)}\,
d\vep\\
&\le&C\sqrt{N_h}\sigma n^{-1/2}e^{1/2}(h)\lam_n^{-1/2}.
\een
This completes the proof using the condition that $N_h e(h)\leq C \lambda_n^{1-\gamma/2}$. 
\end{proof}

The following theorem presents the main results of this section.

\begin{theorem}\label{thm:4.2}
{Assume Assumption \ref{Assumption2} and \ref{Assumption3} are fulfilled.} Let $f_h\in V_h$ be the solution of \eqref{d1}. Denote by $\rho_0=\|f^*\|_{X}+\sigma n^{-1/2}$. If we take
$e(h)\leq C \lambda_n$, $N_h e(h)\leq C \lambda_n^{1-\gamma/2}$ and $\lambda_n^{1/2+\gamma/4}= O(\sigma n^{-1/2}\rho_0^{-1})$,
then there exists a constant $C>0$ such that for any $z>0$,
\ben
\mathbb{P}(\|S_h f_h-Sf^*\|_n\ge \lam_n^{1/2}\rho_0z)\le 2e^{-Cz^2} \q \mbox{and} \q 
\mathbb{P}(\|f_h\|_{X}\ge \rho_0z)\le 2e^{-Cz^2}.
\een
{More over, with the same assumption and notations in Theorem \ref{thm:2.1}, we have, 
\be\label{w-con-ds}
\mathbb{P}(\|f_h-f^*\|_{Z'}\ge \lam_n^{1/4}\rho_0z)\le 2e^{-Cz^2}.
\ee}
\end{theorem}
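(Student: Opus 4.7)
The plan is to replicate the peeling argument of Theorem~\ref{thm:4.1} inside the discrete subspace $V_h$, with the one genuinely new ingredient being Lemma~\ref{lem:4.8} to control the discrete-correction part $(e,(S_h-S)w_h)_n$ of the noise that is absent at the continuous level. By the tail bound \eqref{e3} and Lemma~\ref{lem:4.6}, it suffices to produce the two Orlicz estimates $\|\|S_hf_h-Sf^*\|_n\|_{\psi_2}\le C\lam_n^{1/2}\rho_0$ and $\|\|f_h\|_X\|_{\psi_2}\le C\rho_0$; the $Z'$-tail \eqref{w-con-ds} will then be obtained by the final interpolation step of Theorem~\ref{thm:3.1}, namely writing $\|f_h-f^*\|_{Z'}^2\le\|Sf_h-Sf^*\|_{L^2(\Om)}\|f_h-f^*\|_X$, splitting $\|Sf_h-Sf^*\|_{L^2(\Om)}$ through Assumption~\ref{Assumption3}(1) and Assumption~\ref{Assumption1}(1), and combining the two tail bounds above.

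For the first Orlicz estimate I would first derive a discrete analogue of \eqref{g1}: comparing the cost of $f_h$ to that of an approximant $v_h\in V_h$ of $f^*$ furnished by Assumption~\ref{Assumption3}(2), substituting $m=Sf^*+e$, and using $e(h)\le C\lam_n$ together with Assumption~\ref{Assumption3}(1) to absorb the defect $\|S_hv_h-Sf^*\|_n^2+\lam_n\|v_h\|_X^2\le C\lam_n\rho_0^2$, yields
\[
\|S_hf_h-Sf^*\|_n^2+\lam_n\|f_h\|_X^2\le 2(e,S_hw_h)_n+C\lam_n\rho_0^2, \qquad w_h:=f_h-v_h\in V_h.
\]
Then, splitting $(e,S_hw_h)_n=(e,Sw_h)_n+(e,(S_h-S)w_h)_n$, I would set up dyadic shells $F_{ij}:=\{w_h\in V_h:\|S_hw_h\|_n\in A_i,\ \|w_h\|_X\in B_j\}$ for $i,j\ge 0$ with $A_i,B_j$ as in \eqref{g2}, $\delta=\lam_n^{1/2}\rho_0(1+z)$ and $\rho=\rho_0$.

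On $F_{ij}$ Assumption~\ref{Assumption3}(1) gives $\|Sw_h\|_n\le 2^i\delta+Ce(h)^{1/2}2^j\rho\le C(2^i\delta+\lam_n^{1/2}2^j\rho)$ and the boundedness $S\colon X\to Y$ confines $S(F_{ij})$ to a $Y$-ball of radius $C\cdot 2^j\rho$; Lemma~\ref{lem:4.5}, Lemma~\ref{lem:4.2} and Assumption~\ref{Assumption2} then produce, exactly as in \eqref{g5},
\[
\Bigl\|\sup_{w_h\in F_{ij}}|(e,Sw_h)_n|\Bigr\|_{\psi_2}\le C\sigma n^{-1/2}(2^j\rho)^{\gamma/2}(2^i\delta+\lam_n^{1/2}2^j\rho)^{1-\gamma/2},
\]
while on $F_{ij}$ we have $\|w_h\|_{a_h}^2\le 2^{2i}\delta^2+\lam_n 2^{2j}\rho^2$, so a rescaled version of Lemma~\ref{lem:4.8} (applied to the $\|\cdot\|_{a_h}$-ball of radius $R:=2^i\delta+\lam_n^{1/2}2^j\rho$) gives
\[
\Bigl\|\sup_{w_h\in F_{ij}}|(e,(S_h-S)w_h)_n|\Bigr\|_{\psi_2}\le CR\,\sigma n^{-1/2}\lam_n^{-\gamma/4}.
\]
With the scaling $\lam_n^{1/2+\gamma/4}=O(\sigma n^{-1/2}\rho_0^{-1})$, both bounds are dominated (after Young's inequality used to linearize the $\gamma/2$ and $1-\gamma/2$ exponents) by the leading energy quantity $2^{2(i-1)}\delta^2+\lam_n 2^{2(j-1)}\rho^2$, so \eqref{e3} together with the book-keeping of \eqref{eq:Pij} yields $\mathbb{P}(w_h\in F_{ij})\le 2\exp[-C(2^{2i}z^2+2^{2j})]$ for $i,j\ge 1$, with the analogous bound for $i\ge 1$, $j=0$.

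Summing over $i,j\ge 0$ as in \eqref{g7} and using $\|S_hf_h-Sf^*\|_n\le\|S_hw_h\|_n+\|S_hv_h-Sf^*\|_n\le\|S_hw_h\|_n+C\lam_n^{1/2}\rho_0$ gives the first Orlicz estimate; peeling over the $\|\cdot\|_X$-shells alone (summing only in $j\ge 1$ with $i\ge 0$) produces the second via $\|f_h\|_X\le\|w_h\|_X+\|v_h\|_X\le\|w_h\|_X+C\rho_0$. The main obstacle, and the only place where the argument is not a cosmetic rewriting of Theorem~\ref{thm:4.1}, is verifying that the discrete-correction Orlicz bound is genuinely absorbed by the leading energy term; this is exactly why the theorem imposes $e(h)\le C\lam_n$ and $N_he(h)\le C\lam_n^{1-\gamma/2}$, the precise hypotheses of Lemma~\ref{lem:4.8}.
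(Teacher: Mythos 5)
Your argument is correct in outline, but it takes a genuinely different route from the paper. The paper does not re-run the peeling argument in $V_h$ at all: it pivots on the continuous regularized solution $f_n$, takes the $\psi_2$-norm of the quasi-optimality bound \eqref{pa0} from the proof of Theorem \ref{thm:3.1}, and controls the three pieces of $F(w_h)$ separately --- the two deterministic consistency terms \eqref{pa1}--\eqref{pa2} are bounded by $C\lambda_n^{1/2}\|f_n\|_X\,\|w_h\|_{a_h}$ and $C\|Sf_n-Sf^*\|_n\,\|w_h\|_{a_h}$, whose $\psi_2$-norms are already supplied by \eqref{xx2} from Theorem \ref{thm:4.1}, while the only new stochastic term $\sup_{w_h\in G_h}|(e,(S-S_h)w_h)_n|$ is exactly Lemma \ref{lem:4.8}. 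This reduces the whole theorem to a triangle inequality in the Orlicz norm, $\|\,\|f_n-f_h\|_{a_h}\|_{\psi_2}\le C\lambda_n^{1/2}\rho_0$, combined with \eqref{xx2} and \eqref{e3}. You instead compare $f_h$ directly with an approximant $v_h$ of $f^*$, rebuild the discrete analogue of \eqref{g1}, and peel over shells in $(\|S_hw_h\|_n,\|w_h\|_X)$, handling the extra $S$-versus-$S_h$ discrepancy inside every shell (both in the entropy-integral diameter and via a rescaled Lemma \ref{lem:4.8}). Your route is self-contained and avoids pushing the random right-hand sides of \eqref{pa1}--\eqref{pa2} through the Orlicz norm, but it duplicates all of the shell bookkeeping of Theorem \ref{thm:4.1} with additional $\lambda_n^{1/2}2^j\rho$ corrections that must be re-absorbed by Young's inequality at each step; the paper's decomposition gets the same conclusion in a few lines precisely because the hard probabilistic work was already packaged in \eqref{xx2} and Lemma \ref{lem:4.8}. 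You correctly identified Lemma \ref{lem:4.8} and the hypotheses $e(h)\le C\lambda_n$, $N_he(h)\le C\lambda_n^{1-\gamma/2}$ as the genuinely new ingredients, and your treatment of \eqref{w-con-ds} by the interpolation $\|f_h-f^*\|_{Z'}^2\le\|Sf_h-Sf^*\|_{L^2(\Om)}\|f_h-f^*\|_X$ matches the paper's.
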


\begin{proof}
We first derive from \eqref{pa0} that 
\ben
\|\|f_n-f_h\|_{a_h}\|_{\psi_2}\leq C\|\inf_{v_h\in V_h} \|f_n-v_h\|_{a_h}\|_{\psi_2} + C\|\sup_{w_h\in V_h} \frac{|F(w_h)|}{\|w_h\|_{a_h}}\|_{\psi_2}.
\een
But we know $\sup_{w_h\in V_h} {|F(w_h)|}/{\|w_h\|_{a_h}}=\sup_{w_h\in G_h} |F(w_h)|$ from the proof of Theorem \ref{thm:3.1}, hence it 
suffices to estimate $\|\sup_{w_h\in G_h} |(e, S w_h-S_h w_h)_n|\|_{\psi_2}$. 
Then the desired estimates follow readily from \eqref{xx2}, Lemma \ref{lem:4.8}, the assumption that 
$\sigma n^{-1/2}=O(\lam_n^{1/2+\gamma/4}\rho_0)$ and \eqref{e3}.  

The proof of \eqref{w-con-ds} is similar to \eqref{w-con-d1}, this completes the proof.
\end{proof} 

\section{An inverse nonstationary source problem}\label{sec:heat_sourse}
In this section, we apply the theory developed in the previous section\,\ref{sec:stationary}  
to study the regularized solutions to an inverse nonstationary source problem 
associated with the heat conduction system 
\begin{equation}\label{zz0}
\left\{
\begin{aligned}
& u_t +Lu = F(x, t)\ \  \mbox{in } \Omega\times (0, T), \\
& u(x, t)= 0\ \ \mbox{on } \partial \Omega\times (0, T),\ \ \ \ u(x, 0)= 0\ \ \mbox{in } \Omega,
\end{aligned} 
\right.
\end{equation}
where $L$ is a second order elliptic operator of the form $Lu=-\nabla\cdot (a(x)\nabla u) +c(x)u$, 
and $\Omega\subset \mathbb R^d$ $(d=1,2,3)$ is a bounded domain with $C^2$ boundary 
or a convex polyhedral domain.  
We assume $a\in C^{1}(\bar{\Omega})$, $c\in C(\bar\Omega)$ with $c(x)\geq 0$ in $\Omega$, and that 
the source is of the separable form 
$F(x,t)=f(x)g(t)$ for $(x,t)\in\Om\times (0,T)$, where the temporal component $g\in H^1(0,T)$ is known and 
satisfies that $g(t)\ge 0\ ~\forall\,t\in(0,T)$, while $f(x)$ is unknown to be recovered. 

For the subsequent analysis, we first recall some standard results 
for parabolic equations (cf., e.g., \cite[\S 7.1]{Evans}). For $F\in H^1(0,T;L^2(\Om))$, we know 
the solution $u$ to \eqref{zz0} satisfies $\pa_t u\in C([0,T];L^2(\Om))\cap L^2(0,T;H^1_0(\Om))$ and 
the a priori estimate 
\ben
\|\pa_t u\|_{C([0,T];L^2(\Om))}\le C\|F\|_{H^1(0,T;L^2(\Om))}\le C\|f\|_{L^2(\Om)}.
\een
It follows then from the equation \eqref{zz0} and the regularity theory of elliptic equations that $u\in C([0,T];H^2(\Om))$ and there exists a constant $C$ such that 
\beq\label{zz-1}
\|u\|_{C([0,T];H^2(\Om))}\le C\|f\|_{L^2(\Om)}.
\eeq

Let $X=L^2(\Omega)$, $Y=H^2(\Omega)$, and 
the forward operator $S:$ $X\rightarrow Y$ be defined by $Sf=u(\cdot,T)$. 
By \eqref{zz-1} we know that $S:X\to Y$ is a bounded
operator
\ben
\|Sf\|_{H^2(\Omega)}\leq C \|f\|_{L^2(\Omega)} \q \forall\,f\in L^2(\Omega)\,.
\een

We are mainly interested in the following inverse nonstationary source problem: 

\ss\no
({\bf TIP})  Given the measurement data of $u(\cdot, t)$ at the terminal $t=T$, 
recover the spatial source distribution $f^*(x)$ in the entire domain $\Om$. 

\ss

We focus on an important physical scenario, i.e., measurement data 
is collected pointwise on a set of distributed sensors located at $\{x_i\}^n_{i=1}$ inside the domain 
$\Om$  \cite{AB05, new4, new2, new6, L08,nelson20, NNR98}.
Again, we assume the data is of the noisy form \eqref{eq:data}, 
where $\{x_i\}^n_{i=1}$ is quasi-uniformly distributed in the sense of \eqref{aa}. 

We then look for an approximate solution of the true source $f^*$ through the following least-squares regularized minimization: 
\beq\label{para-examp1}
\mathop {\rm min}\limits_{f\in X}         \|Sf-m\|^2_n+\lambda_n \|f\|_{X}^2\,.
\eeq

\subsection{Stochastic convergence for the inverse heat source problem}\label{sec:heat}
In this subsection we apply the results in section\,\ref{sec:stationary} to study the stochastic convergence of the solution 
of the problem \eqref{para-examp1} to the exact source $f^*$. 
We first recall an important property about the eigenvalue distribution for the elliptic operator $L$ \cite{agmon,Fleckinger}.

\begin{lemma}\label{para-lem:2.1} Suppose $\Omega$ is a bounded domain in $\mathbb{R}^d$ and $a, c\in C^0(\bar{\Omega})$, $c\geq 0$, then the eigenvalue problem
\beq\label{yy2}
L\psi =\mu\,\psi~ with ~ \psi_{\partial\Omega}=0
\eeq
has a countable set of positive eigenvalues $\mu_1\le\mu_2\le\cdots$,  with its corresponding eigenfunctions 
$\{\phi_k\}_{k=1}^\infty$ forming an orthogonal basis of $L^2(\Omega)$. 
Moreover, there exist constants $C_1,C_2>0$ such that 
$C_1 k^{2/d}\le \mu_k\le C_2k^{2/d}$ for all $k=1,2,\cdots.$
\end{lemma}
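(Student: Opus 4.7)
The plan is to reduce the lemma to the spectral theory of a compact self-adjoint operator on $L^2(\Om)$ and then invoke a min-max comparison argument to transfer the classical Weyl asymptotics for the Dirichlet Laplacian to the operator $L$.

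First, I would establish the existence of the eigenpairs and the basis property. Since $a\in C^0(\bar\Om)$ is (implicitly) uniformly positive by ellipticity, say $0<a_{\min}\le a(x)\le a_{\max}$, and $0\le c(x)\le c_{\max}$ on $\bar\Om$, the bilinear form $B(u,v)=\int_\Om(a\,\na u\cdot\na v+c\,uv)\,dx$ on $H^1_0(\Om)$ is continuous and coercive. Hence $L:H^1_0(\Om)\to H^{-1}(\Om)$ is a topological isomorphism, and by the Rellich--Kondrachov embedding the inverse $L^{-1}:L^2(\Om)\to L^2(\Om)$ is compact, self-adjoint and positive. The spectral theorem for such operators then furnishes a countable orthonormal basis $\{\phi_k\}_{k=1}^\infty$ of $L^2(\Om)$ consisting of eigenfunctions of $L$, together with a sequence of positive eigenvalues $0<\mu_1\le\mu_2\le\cdots\to\infty$.

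For the two-sided bound $C_1k^{2/d}\le\mu_k\le C_2k^{2/d}$, I would apply the Courant--Fischer min-max characterization
\ben
\mu_k=\min_{\substack{V\subset H^1_0(\Om)\\\dim V=k}}\ \max_{0\ne v\in V}\,\frac{B(v,v)}{\|v\|^2_{L^2(\Om)}}.
\een
With the coefficient bounds above, the Rayleigh quotient for $L$ is sandwiched between those of $-a_{\min}\De$ and $-a_{\max}\De+c_{\max}$. Writing $\nu_k$ for the $k$-th eigenvalue of the Dirichlet Laplacian on $\Om$, this yields
\ben
a_{\min}\,\nu_k\;\le\;\mu_k\;\le\;a_{\max}\,\nu_k+c_{\max}\qquad\forall\,k\ge 1.
\een

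The remaining ingredient is the two-sided Weyl bound $\nu_k\asymp k^{2/d}$ for the Dirichlet Laplacian on a bounded domain with $C^2$ (or convex polyhedral) boundary, for which I would cite \cite{agmon, Fleckinger}. The standard route is Dirichlet--Neumann bracketing: tile $\Om$ from inside and outside by small cubes whose Dirichlet/Neumann spectra are explicit, and combine with the min-max principle to sandwich $\nu_k$ between constant multiples of $k^{2/d}$. This Weyl-type asymptotic is the main obstacle and the only genuinely non-routine step; once it is granted, transferring it to $L$ via the coefficient comparison above, and deriving the desired constants $C_1,C_2$ by absorbing the additive $c_{\max}$ into a slightly larger multiplicative constant for $k\ge 1$, is immediate.
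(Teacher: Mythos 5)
Your proof is correct and is essentially the argument the paper relies on: the lemma is stated in the paper without proof, with a citation to \cite{agmon,Fleckinger}, and your reconstruction (compact self-adjoint resolvent plus spectral theorem for existence and the basis property, Courant--Fischer comparison with $-a_{\min}\De$ and $-a_{\max}\De+c_{\max}$, and the two-sided Weyl bound $\nu_k\asymp k^{2/d}$ via Dirichlet--Neumann bracketing) is exactly the standard route those references encode. The only implicit ingredients worth flagging are the uniform ellipticity $a\ge a_{\min}>0$ (assumed in the paper's setup of $L$ though not restated in the lemma) and the Poincar\'e inequality for coercivity when $c\equiv 0$; both are handled correctly in your outline.
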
 

With Lemma \ref{para-lem:2.1}, we can derive the important spectral property of operator $S$.

\begin{theorem}\label{para-the2.1}
Let $g\in H^1(0,T)$ and $g > 0$. Then the eigenvalue problem
\beq\label{yy3}
(\psi ,v)=\rho (S\psi ,Sv) \q \forall\, v\in X
\eeq
has a countable set of positive eigenvalues $0<\rho_1\le\rho_2\le\cdots$. Moreover, there exists a constant $C>0$ such that $\rho_k\ge Ck^{4/d}$ for all $k=1,2,\cdots$.
\end{theorem}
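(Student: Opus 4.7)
The plan is to diagonalize $S$ in the $L$-eigenbasis and reduce the generalized eigenvalue problem \eqref{yy3} to a countable family of scalar computations. By Lemma \ref{para-lem:2.1}, let $\{\phi_k\}_{k=1}^\infty$ be the $L^2(\Om)$-orthonormal eigenfunctions of $L$ with eigenvalues $\mu_k$. For any $f=\sum_k f_k\phi_k\in L^2(\Om)$, the separable structure of the source $f(x)g(t)$ decouples \eqref{zz0} into the scalar ODEs $u_k'(t)+\mu_k u_k(t)=f_k g(t)$ with $u_k(0)=0$, whose unique solution at $t=T$ is
\begin{equation*}
u_k(T)=f_k G_k,\qquad G_k:=\int_0^T e^{-\mu_k(T-s)}g(s)\,ds.
\end{equation*}
Hence $Sf=\sum_k f_k G_k\phi_k$, and since $\{\phi_k\}$ is orthonormal in $L^2(\Om)=X$, one reads off
\begin{equation*}
(f,v)=\sum_k f_k v_k,\qquad (Sf,Sv)=\sum_k G_k^2 f_k v_k.
\end{equation*}

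This immediately identifies the eigenpairs of \eqref{yy3}: each $\phi_k$ is an eigenfunction with eigenvalue $\rho_k=G_k^{-2}$. Because $g\ge0$ and $g\not\equiv 0$ (in fact $g>0$), each $G_k$ is strictly positive, so $\rho_k$ is well-defined and positive. Moreover, since the map $\mu\mapsto\int_0^T e^{-\mu(T-s)}g(s)\,ds$ is strictly decreasing in $\mu$ and $\mu_k$ is nondecreasing, the sequence $G_k$ is nonincreasing and thus $\rho_k=G_k^{-2}$ is already in nondecreasing order, matching the indexing in the theorem.

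It remains to obtain the lower bound $\rho_k\ge Ck^{4/d}$, equivalently the upper bound $G_k\le C\mu_k^{-1}$. Using $g\in H^1(0,T)\hookrightarrow C([0,T])$ to get $\|g\|_{L^\infty(0,T)}\le C\|g\|_{H^1(0,T)}$, I would simply estimate
\begin{equation*}
0<G_k\le \|g\|_{L^\infty(0,T)}\int_0^T e^{-\mu_k(T-s)}\,ds=\|g\|_{L^\infty(0,T)}\,\frac{1-e^{-\mu_k T}}{\mu_k}\le \frac{C}{\mu_k}.
\end{equation*}
Combining with the Weyl-type estimate $\mu_k\ge C_1 k^{2/d}$ from Lemma \ref{para-lem:2.1} yields $\rho_k=G_k^{-2}\ge C\mu_k^2\ge Ck^{4/d}$, which is the claimed bound.

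There is no real obstacle here; the only subtle points are verifying that the $L$-diagonalization survives the terminal-time map (which is transparent because the source is separable in space and time) and confirming that the $k$-indexing of $\rho_k$ coincides with the increasing rearrangement — which it does by monotonicity of $\mu\mapsto\int_0^T e^{-\mu(T-s)}g(s)\,ds$ under $g\ge 0$. If needed, positivity of $G_k$ could alternatively be secured by noting that $\int_0^T e^{-\mu_k(T-s)}g(s)\,ds\ge e^{-\mu_k T}\int_0^T g(s)\,ds>0$, which also makes explicit that the constant in $\rho_k\ge Ck^{4/d}$ depends only on $T$, $\|g\|_{H^1(0,T)}$, and the ellipticity data entering the bound for $\mu_k$.
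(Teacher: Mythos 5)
Your proposal is correct and follows essentially the same route as the paper's own proof: expand in the $L^2$-orthonormal eigenbasis of $L$, solve the decoupled scalar ODEs to get $S\phi_k=G_k\phi_k$ with $G_k=\int_0^Te^{-\mu_k(T-s)}g(s)\,ds$ (the paper's $\alpha_k$), bound $G_k\le C\mu_k^{-1}$, and invoke the Weyl-type estimate $\mu_k\ge C_1k^{2/d}$ to conclude $\rho_k=G_k^{-2}\ge Ck^{4/d}$. The only cosmetic difference is that the paper first treats the intermediate problem $\psi=\eta S\psi$ with $\eta_k=\alpha_k^{-1}$ and then squares, while you pass directly to $\rho_k=G_k^{-2}$; your remarks on positivity and monotone ordering of the $G_k$ are slightly more explicit than the paper's.
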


\begin{proof} We first consider the eigenvalue problem
\beq\label{para-exameigenS2}
\psi=\eta\, S\psi.
\eeq
Let $\{\phi_k\}_{k=1}^\infty$ be eigenfunctions of the problem \eqref{yy2} which forms an orthogonal basis of $L^2(\Omega)$. We 
write $f=\sum_{k=1}^\infty f_k \phi_k$ for a set of 
coefficients $f_k$. 
Let $u=\sum_{k=1}^\infty u_k(t) \phi_k$ be the solution of the problem \eqref{zz0}.
Plugging these two expressions of $f$ and $u$ into the first equation of \eqref{zz0}, 
we get by noting the fact that $L\phi_k=\mu_k\phi_k$ and comparing the coefficients of $\phi_k$ 
on both sides of the equation that $u_k(0)= 0$ and 
\begin{equation*}
u'_k(t) + \mu_k u_k=f_k\,g(t)\ \ \ \ \mbox{in } ~(0, T)\,.
\end{equation*}
We can write the solution as $u_k(T)=\alpha_k\,f_k$, with $\alpha_k=e^{-\mu_kT}\int_0^Te^{\mu_ks}g(s)ds$. Since $g > 0$ in $(0,T)$, we know $\al_1\ge\al_2\ge\cdots >0$. Moreover, we can easily see that $|\alpha_k|\leq 
C\mu_k^{-1}$. Noting that $Sf=u(\cdot, T)=\sum^\infty_{k=1} u_k(T) \phi_k$, we can formally write 
\ben
 S\Big(\sum_{k=1}^\infty f_k \phi_k\Big)=  \sum_{k=1}^\infty \al_k f_k \phi_k.
\een
Since $\{\phi_k\}_{k=1}^\infty$ is an orthogonal basis of $L^2(\Omega)$, we can readily see 
that the eigenvalue problem \eqref{para-exameigenS2} has a countable set of positive eigenvalues $\{\eta_k=\alpha_k^{-1}\}_{k=1}^\infty$, 
with $\{\phi_k\}_{k=1}^\infty$ being their corresponding eigenfunctions. 
By Lemma \ref{para-lem:2.1}, we have $\eta_k=\al_k^{-1}\ge C\mu_k\ge C_1k^{2/d}$. Therefore, the eigenvalue problem \eqref{yy3} has a countable set of eigenvalues $\{\rho_k\}_{k=1}^\infty$ that satisfies $\rho_k=\eta_k^2\ge Ck^{4/d}$. 
This completes the proof. 
\end{proof} 

{Next, we will certify that the abstract function space $Z$ in Theorem \ref{thm:2.1} is actually a subspace of $H^1(\Omega)$ for the inverse problem discussed in this section. So that the weaker convergence of the inverse problem corresponding to this section is $H^{-1}$ convergence under a certain assumption in the following Lemma. }
{\begin{lemma}\label{lemma-Z}
With the same notations in Lemma \ref{para-lem:2.1} and Theorem \ref{para-the2.1}. Then abstract function space $$Z=\{v\in L^2(\Omega):~ v=\sum_{k=1}^\infty v_k\phi_k,~ v_k=(v,\phi_k)_{L^2(\Omega)}~and ~ \sum_{k=1}^\infty \rho_k^{1/2}v_k^2<\infty\}$$
is actually a subspace of the Sobolev space $H^1(\Omega)$, so the dual space $H^{-1}(\Omega)\subset Z'$. Moreover, if the eigenvalues $\rho_k$ of \eqref{yy3} satisfy that $\rho_k=O( k^{4/d})$ for all $k=1,2,\cdots$, then $Z=H^1(\Omega)$ and $Z'=H^{-1}(\Omega)$.
\end{lemma}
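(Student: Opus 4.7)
The plan is to exploit the spectral characterization of Sobolev spaces afforded by the elliptic operator $L$, and to translate the summability conditions defining $Z$ into the standard eigenfunction summability conditions for $H^1_0(\Omega)$. The key input is Theorem \ref{para-the2.1}, which identifies the eigenvalues $\rho_k$ of the generalized eigenvalue problem \eqref{yy3} as $\rho_k=\eta_k^2=\alpha_k^{-2}$ with the same eigenfunctions $\{\phi_k\}$ as the operator $L$, and provides the lower bound $\eta_k\ge C\mu_k$. I would combine this with the standard fact (consequence of elliptic regularity and the spectral theorem) that the $\{\phi_k\}$, once renormalized, form an orthogonal basis of $H^1_0(\Omega)$ with the characterization
\[
v=\sum_{k=1}^\infty v_k\phi_k\in H^1_0(\Omega) \iff \sum_{k=1}^\infty \mu_k v_k^2<\infty,
\]
and that this last sum is equivalent to $\|v\|_{H^1(\Omega)}^2$.

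For the first assertion, I would take any $v=\sum v_k\phi_k\in Z$ and note that the lower bound $\rho_k^{1/2}=\eta_k\ge C\mu_k$ proved inside Theorem \ref{para-the2.1} (together with Lemma \ref{para-lem:2.1}) yields
\[
\sum_{k=1}^\infty \mu_k v_k^2\le C\sum_{k=1}^\infty \rho_k^{1/2}v_k^2<\infty.
\]
Hence $v\in H^1_0(\Omega)\subset H^1(\Omega)$, which gives the continuous embedding $Z\hookrightarrow H^1(\Omega)$. Dualizing this embedding with respect to the $L^2(\Omega)$-pairing (under which $Z\hookrightarrow L^2(\Omega)\hookrightarrow Z'$ is the Gelfand triple arising from the definition of $Z$) produces $H^{-1}(\Omega)=(H^1_0(\Omega))'\subset Z'$, as claimed.

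For the second assertion, the assumed upper bound $\rho_k=O(k^{4/d})$ combined with $\mu_k\asymp k^{2/d}$ (Lemma \ref{para-lem:2.1}) gives the two-sided comparison $\rho_k^{1/2}\asymp \mu_k$. Consequently $\sum \rho_k^{1/2}v_k^2<\infty$ if and only if $\sum \mu_k v_k^2<\infty$, so $Z$ coincides with $H^1_0(\Omega)$ as sets with equivalent norms, and by duality $Z'=H^{-1}(\Omega)$.

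The main obstacle, and the step that requires the most care, is the $H^1$ versus $H^1_0$ distinction and the correct identification of $Z'$ through a Gelfand triple: one has to verify that the dual that appears in Theorem \ref{thm:2.1} is taken exactly with respect to the $L^2$-pairing (so that the embedding $Z\hookrightarrow L^2(\Omega)$ is used as the pivot), and that the spectral characterization of $H^1_0(\Omega)$ via $\{\phi_k\}$ is quantitatively the right one, i.e.\ that $(\sum \mu_k v_k^2)^{1/2}$ is equivalent to $\|v\|_{H^1(\Omega)}$ on $H^1_0(\Omega)$. Once these two points are fixed, the rest reduces to the elementary comparison of summability conditions above.
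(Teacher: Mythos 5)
Your proposal is correct and follows essentially the same route as the paper: expand $v$ in the Dirichlet eigenfunctions $\{\phi_k\}$, use the bound $\rho_k^{1/2}=\eta_k\ge C\mu_k$ from Theorem \ref{para-the2.1} to compare $\sum\rho_k^{1/2}v_k^2$ with $\sum\mu_k v_k^2\asymp\|v\|_{H^1}^2$, and upgrade to a two-sided equivalence under the extra hypothesis $\rho_k=O(k^{4/d})$. The only difference is cosmetic — you invoke the spectral characterization of $H^1_0(\Omega)$ as a known fact where the paper rederives the norm comparison by integration by parts and ellipticity — and your explicit attention to the $H^1_0$ versus $H^1$ distinction and the $L^2$-pivoted duality is, if anything, slightly more careful than the paper's statement.
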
}
\begin{proof}
Since the eigenfunctions 
$\{\phi_k\}_{k=1}^\infty$ forms an orthogonal basis of $L^2(\Omega)$, then for any $v\in L^2(\Omega)$ can be expanded as 
\ben
v=\sum_{k=1}^\infty v_k\phi_k~with~ v_k=(v,\phi_k)_{L^2(\Omega)}.
\een
 
From the definition of $\{\phi_k\}_{k=1}^\infty$ in \eqref{yy2}, integrating by part, we have 
\ben
a(\phi_k,q)=\mu_k(\phi_k,q),~\forall ~ q\in H^1(\Omega),
\een 
where $a(p,q)=(ap,q)+(cp,q)$. From the ellipticity of the operator $L$ and take $q=\phi_j$, we could derive that 
\ben
\|\phi_k\|_{H^1(\Omega)}=O(\|\phi_k\|_{L^2(\Omega)}),~ (\phi_k,\phi_j)_{H^1(\Omega)}=\delta_{kj}.
\een
With the expansion $v=\sum_{k=1}^\infty v_k\phi_k$, 
\ben
\|v\|^2_{H^1(\Omega)}=O(\sum_{k=1}^\infty \mu_k v_k^2)\leq C \sum_{k=1}^\infty \rho^{1/2}_k v_k^2,
\een
this will give $\|v\|_{H^1(\Omega)}\leq C\|v\|_{Z}$. 

Moreover, if the eigenvalues $\rho_k$ satisfy that $\rho_k=O( k^{4/d})$, i.e. $\rho_k=O( \mu_k^2)$, we could derive from the above estimate that $\|v\|_{H^1(\Omega)}=O(\|v\|_{Z})$. That is to say $Z=H^1(\Omega)$ and $Z'=H^{-1}(\Omega)$.
\end{proof}
{\textbf{Remark:} In the general case, we could only conclude the eigenvalues satisfy $\rho_k\geq C k^{4/d}$ from Theorem \ref{para-the2.1}. But from the Lemma above, if we expect the space $Z=H^1(\Omega)$, we need a upper bound, i.e. $\rho_k\leq C_1 k^{4/d}$. This is actually not a strict condition, for example, we could just assume the right hand side $g(x)\geq g_{min}>0$ in Theorem \ref{para-the2.1}. With the same notations in proof of Theorem \ref{para-the2.1}, one could get
\ben
|\alpha_i|=|e^{-\mu_iT}\int_0^Te^{\mu_is}g(s)ds|\geq g_{min} |e^{-\mu_iT}\int_0^Te^{\mu_is}ds|\\
 =  g_{min} \frac{1-e^{-\mu_i T}}{\mu_i} \geq   g_{min} \frac{1}{2\mu_i}.
\een
Here one could take $T_0$ such that $e^{-\mu_1 T_0}=\frac{1}{2}$, then for $T\geq T_0$, $u_i(T)=1-e^{-\mu_i T}\geq \frac{1}{2}$.
This will readily give $\rho_k=O( k^{4/d})$. Hence $Z=H^1(\Omega)$ and $Z'=H^{-1}(\Omega)$ as conclusion of Theorem \ref{para-the2.1}. In the following section, we will always assume $\rho_k=O( k^{4/d})$, i.e. $Z=H^1(\Omega)$ and $Z'=H^{-1}(\Omega)$.}

{\bf Verification of Assumptions \ref{Assumption1} and \ref{Assumption2}.} 
We first know Assumption \ref{Assumption1}(1) holds with $\beta=4/d$ from 
\cite[Theorems 3.3-3.4]{Utreras}. 
This, along with Theorem \ref{para-the2.1}, verifies Assumption \ref{Assumption1}(2) 
with $\al=\beta=4/d$. 
Assumption \ref{Assumption2} (with $\gamma=d/2$) is a consequence of the following important estimate 
about the covering entropy \cite{Birman}.
\begin{lemma}\label{para-examlem:4.3}
Let $Q$ be the unit cube in $\R^d$ and $SW^{s,p}(Q)$ be the unit sphere of space 
$W^{s,p}(Q)$ for $s> 0$ and $p\ge 1$. Then it holds for sufficient small $\vep>0$ that 
\ben
\log N(\vep, SW^{s,p}(Q), \|\cdot\|_{L^q(Q)})\le C\vep^{-d/s},
\een
where $1\le q\le\infty$ for $sp>d$, and $1\le q\le q^*$ with $q^*=p(1-sp/d)^{-1}$
for $sp\le d$.
\end{lemma}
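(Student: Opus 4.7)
The plan is to construct a multi-scale piecewise-polynomial approximation of functions in $SW^{s,p}(Q)$ and then count, at each scale, how many balls in $L^q(Q)$ are needed to cover the finite-dimensional spaces of increments. This is essentially the classical Birman-Solomyak strategy, which by design yields the sharp exponent $\vep^{-d/s}$ without any logarithmic correction.

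First I would set up a dyadic decomposition: for $j=0,1,\dots,J$ partition $Q$ into $N_j=2^{jd}$ congruent subcubes of side $2^{-j}$, and on each subcube let $\Pi_j u$ be the $L^2$-orthogonal projection of $u$ onto polynomials of degree strictly less than $s$. Writing $u-\Pi_0 u=\sum_{j\ge 0}(\Pi_{j+1}u-\Pi_j u)$, the key local input is the scaled Sobolev-Poincar\'e inequality
\ben
\|\Pi_{j+1}u-\Pi_j u\|_{L^q(Q)}\le C\,2^{-j(s-d/p+d/q)}\,|u|_{W^{s,p}(Q)},
\een
which holds precisely under the condition $s-d/p+d/q\ge 0$, i.e., $q\le q^*$ in the case $sp\le d$, and for all $q\le\infty$ when $sp>d$. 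This is exactly the range of $q$ appearing in the lemma.

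Next I would bound the covering entropy scale by scale. Each increment $\Pi_{j+1}u-\Pi_j u$ lives in a finite-dimensional space $V_j$ with $\dim V_j\le C\,2^{jd}$, and its $L^q$-norm is bounded by $\eta_j:=C\,2^{-j(s-d/p+d/q)}$. By a volume argument analogous to Lemma \ref{lem:4.7}, the $\eta_j$-ball of $V_j$ can be covered in $L^q$ by at most $(3\eta_j/\vep_j)^{\dim V_j}$ balls of radius $\vep_j$. Choosing $\vep_j=c\,2^{-j\theta}\vep$ with a small $\theta>0$ so that $\sum_j\vep_j\le\vep$, and choosing $J$ as the smallest integer with $\eta_J\le\vep/2$ (hence $J\sim s^{-1}\log_2(1/\vep)$), one obtains
\ben
\log N(\vep,SW^{s,p}(Q),\|\cdot\|_{L^q(Q)})\le\sum_{j=0}^J\dim V_j\cdot\log(3\eta_j/\vep_j)\le C\,2^{Jd}\,J\le C\,\vep^{-d/s},
\een
since the geometric factor $2^{Jd}$ at the top scale dominates the logarithmic nuisance produced by $\log(3\eta_j/\vep_j)$.

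The main obstacle is the local Sobolev-Poincar\'e step when $s$ is non-integer, where the classical Bramble-Hilbert lemma no longer applies directly and one must invoke the Besov characterization of $W^{s,p}$ together with a Whitney-type local polynomial approximation on each subcube; equally delicate is the borderline $sp=d$, in which the constant in the Sobolev embedding degenerates as $q\to q^*$ and must be tracked carefully through the summation over scales to ensure that the bound $C\vep^{-d/s}$ is uniform in $q\le q^*$.
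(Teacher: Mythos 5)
First, a point of reference: the paper offers no proof of this lemma at all --- it is quoted verbatim from Birman and Solomyak \cite{Birman} --- so your attempt has to be measured against the classical argument rather than against anything in the text. Your overall architecture (dyadic piecewise-polynomial projections, telescoping, volume-counting nets of the finite-dimensional increment spaces) is the right family of ideas, and the scaled local estimate $\|\Pi_{j+1}u-\Pi_ju\|_{L^q(Q)}\le C\,2^{-j\mu}|u|_{W^{s,p}(Q)}$ with $\mu=s-d/p+d/q$ is correct for $q\ge p$ after the usual summation over cubes.

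The gap is in the exponent. The stopping condition $\eta_J\le\vep/2$ forces $J\sim\mu^{-1}\log_2(1/\vep)$, not $s^{-1}\log_2(1/\vep)$; your parenthetical claim is an algebra slip unless $p=q$. Hence the top-scale count is $2^{Jd}\sim\vep^{-d/\mu}$ and the uniform-partition argument delivers only $\log N(\vep)\lc\vep^{-d/\mu}$. When $q>p$ one has $\mu<s$, so this is strictly weaker than the claimed $\vep^{-d/s}$ --- and $q>p$ is exactly the regime the paper needs: it invokes the lemma with $s=2$, $p=2$, $q=\infty$, where $\mu=2-d/2$ yields the exponent $2d/(4-d)$ (equal to $6$ for $d=3$) instead of $d/2$, which would not even satisfy the requirement $\gamma<2$ in Assumption \ref{Assumption2}. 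This cannot be repaired by tuning the radii $\vep_j$: it is precisely the reason Birman and Solomyak replace the uniform dyadic partition by an \emph{adaptive} one, subdividing a cube only while its local $W^{s,p}$-seminorm exceeds a threshold, so that the number of cells is controlled by $\sum_K|u|^p_{W^{s,p}(K)}\le1$ rather than by $2^{Jd}$; that nonlinear step is what decouples the entropy exponent from $p$ and $q$. Two smaller defects: at the endpoint $q=q^*$ one has $\mu=0$, so your increments do not decay and no $J$ exists (indeed the embedding is not compact there, so the endpoint of the stated range must be read as strict); and even when $\mu=s$ the sum $\sum_{j\le J}2^{jd}\log(3\eta_j/\vep_j)$ retains a factor $\log(1/\vep)$ that the remark about the geometric factor dominating the logarithm does not actually remove --- eliminating it requires a more careful choice of the $\vep_j$ near the top scale.
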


Under Assumptions \ref{Assumption1} and \ref{Assumption2},  the following 
two main results are direct consequences of Theorems \ref{thm:2.1} and \ref{thm:4.1}, 
respectively, for the noisy data of type {\bf (R1)} (random variables with bounded variance) 
and the noisy data of type {\bf (R2)} (sub-Gaussian random variables). 
\begin{theorem}\label{para-examthm:2.1}
For the minimizer $f_n\in L^2(\Omega)$ to the problem (\ref{para-examp1}), 
there exist constants $\lambda_0 > 0$ and $C>0$ such that the following estimates hold 
for any $\lambda_n \leq \lambda_0$:
\ben
\mathbb{E} \big[\|Sf_n-Sf^*\|^2_n\big] &\leq& C \lambda_n \|f^*\|^2_{L^2(\Omega)} + {C\sigma^2}/({n\lambda^{d/4}_n}),\\
\mathbb{E} \big[\|f_n\|^2_{L^2(\Omega)}\big] &\leq& C \|f^*\|^2_{L^2(\Omega)} + {C\sigma^2}/({n\lambda^{1+d/4}_n}).
\een
Moreover, if the eigenvalues $\rho_k$ of \eqref{yy3} satisfy that $\rho_k=O( k^{4/d})$, then
\ben
\mathbb{E} \big[\|f_n-f^*\|^2_{H^{-1}(\Omega)}\big] \leq C \lambda_n^{1/2}\|f^*\|^2_{L^2(\Omega)} + {C\sigma^2}/({n\lambda^{1/2+d/4}_n}).
\een
\end{theorem}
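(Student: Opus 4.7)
The plan is to recognize this theorem as a direct instantiation of the abstract Theorem \ref{thm:2.1} in the concrete setting of the heat source problem, so the core of the proof is really a verification that Assumptions \ref{Assumption1} and \ref{Assumption2} hold with the correct constants. First I would set $X=L^2(\Omega)$, $Y=H^2(\Omega)$, and identify the forward operator $S:f\mapsto u(\cdot,T)$, noting that $S$ is bounded from $X$ into $Y$ by the a priori estimate \eqref{zz-1}, and that $Y$ is continuously embedded in $C(\bar\Omega)$ and compactly embedded in $L^2(\Omega)$ (since $d\le 3$), matching the abstract framework.

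Next I would record the values of the two structural exponents. Assumption \ref{Assumption1}(1) holds with $\beta=4/d$ by the sampling inequalities of Utreras for functions in $H^2(\Omega)$ on a quasi-uniform set of nodes, which the paper cites as \cite[Theorems 3.3-3.4]{Utreras}. Assumption \ref{Assumption1}(2) holds with $\alpha=4/d$: Theorem \ref{para-the2.1} produces eigenvalues $\rho_k\ge C k^{4/d}$ for the generalized eigenvalue problem associated to $S$, and since $4/d\ge 4/3>1$ for $d=1,2,3$, the required inequality $1<\alpha\le\beta$ is satisfied (with equality in this case). With these identifications the two main estimates follow by inserting $1/\alpha=d/4$ into the bounds (\ref{p5})--(\ref{p6}) of Theorem \ref{thm:2.1}.

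For the $H^{-1}$ estimate I would invoke Lemma \ref{lemma-Z}: the eigenfunctions $\{\phi_k\}$ of $L$ are simultaneously eigenfunctions of $S$ (as shown in the proof of Theorem \ref{para-the2.1}) and they form an orthonormal basis of $L^2(\Omega)$ after normalization, so the abstract space $Z$ of Theorem \ref{thm:2.1} is well-defined. Under the extra hypothesis $\rho_k=O(k^{4/d})$, Lemma \ref{lemma-Z} identifies $Z=H^1(\Omega)$ and hence $Z'=H^{-1}(\Omega)$. The weak convergence bound \eqref{w-con} of Theorem \ref{thm:2.1} then reads exactly as the claimed $H^{-1}$ estimate once we substitute $1/\alpha=d/4$; one should also check that the regime $n^{-\beta}\le\lambda_n$ required by \eqref{w-con} is harmless, since in the intended parameter regime $\lambda_n$ is chosen much larger than $n^{-4/d}$.

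There is essentially no novel analytical obstacle here; the proof is bookkeeping, not invention. The only step that might require a small amount of care is checking that the orthonormality convention used in Theorem \ref{thm:2.1} ($(\phi_k,\phi_l)=\delta_{kl}$ with $(S\phi_k,S\phi_l)=\eta_k^{-1}\delta_{kl}$) is consistent with the $L^2$-orthonormalized eigenfunctions of $L$ used here; a normalization constant may appear but will be absorbed into $C$. Everything else is a direct substitution of $\alpha=\beta=4/d$ into the already-proved abstract bounds.
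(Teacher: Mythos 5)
Your proposal is correct and follows exactly the paper's route: the authors also verify Assumption \ref{Assumption1} with $\alpha=\beta=4/d$ (via \cite[Theorems 3.3--3.4]{Utreras} and Theorem \ref{para-the2.1}), then obtain the first two bounds by substituting $1/\alpha=d/4$ into \eqref{p5}--\eqref{p6}, and derive the $H^{-1}$ estimate from \eqref{w-con} together with Lemma \ref{lemma-Z} identifying $Z'=H^{-1}(\Omega)$ when $\rho_k=O(k^{4/d})$. Your remark about the implicit requirement $n^{-\beta}\le\lambda_n$ for the weak-norm bound is a fair observation, as the paper's theorem statement leaves that condition tacit.
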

%
%
%
\begin{theorem}\label{para-examthm:4.1}
Let $f_n\in L^2(\Om)$ be the solution of \eqref{para-examp1} and 
$\rho_0=\|f^*\|_{L^2(\Omega)}+\sigma n^{-1/2}$. If we take $\lambda_n$ such that 
$\lambda_n^{1/2+d/8}= O(\sigma n^{-1/2}\rho_0^{-1})$,
then the following estimates hold for some constant $C>0$:
\ben
\mathbb{P}(\|Sf_n-Sf^*\|_n\ge \lam_n^{1/2}\rho_0z)\le 2e^{-Cz^2},\ \ \ \ 
\mathbb{P}(\|f_n\|_{L^2(\Om)}\ge \rho_0z)\le 2e^{-Cz^2}.
\een
Moreover, if the eigenvalues $\rho_k$ of \eqref{yy3} satisfy that $\rho_k=O( k^{4/d})$, then
\ben
\mathbb{P}(\|f_n-f^*\|_{L^2(\Om)}\ge \lambda_n^{1/4}\rho_0z)\le 2e^{-Cz^2}.
\een
\end{theorem}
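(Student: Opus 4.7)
The plan is to derive Theorem \ref{para-examthm:4.1} as a direct specialization of the abstract Theorem \ref{thm:4.1} to the heat source setting $X = L^2(\Om)$, $Y = H^2(\Om)$, with $S: f \mapsto u(\cdot,T)$ bounded from $X$ to $Y$ by the regularity estimate \eqref{zz-1}. Assumption \ref{Assumption1} has already been verified in the discussion preceding the statement (with $\alpha = \beta = 4/d$ coming from Theorem \ref{para-the2.1}), so the only pre-work needed here is to verify Assumption \ref{Assumption2} and then read off the consequences of Theorem \ref{thm:4.1}.

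To verify Assumption \ref{Assumption2} I would observe that $S$ maps the unit ball of $X$ into a bounded subset of $Y = H^2(\Om) = W^{2,2}(\Om)$. Lemma \ref{para-examlem:4.3} applied with $s = 2$, $p = 2$, $q = \infty$ is admissible since $sp = 4 > d$ for $d \in \{1,2,3\}$, and yields $\log N(\vep, SY, \|\cdot\|_{L^\infty(\Om)}) \le C\vep^{-d/2}$. Hence Assumption \ref{Assumption2} holds with $\gamma = d/2 < 2$. The scaling hypothesis $\lambda_n^{1/2 + \gamma/4} = O(\sigma n^{-1/2} \rho_0^{-1})$ of Theorem \ref{thm:4.1} then reduces to $\lambda_n^{1/2 + d/8} = O(\sigma n^{-1/2} \rho_0^{-1})$, matching the stated hypothesis verbatim. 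Applying Theorem \ref{thm:4.1} with $X = L^2(\Om)$ immediately delivers the first two sub-Gaussian tail bounds of the theorem.

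For the last bound I would invoke estimate \eqref{w-con2} of Theorem \ref{thm:4.1}. The eigenfunctions $\{\phi_k\}$ of $L$ from \eqref{yy2} diagonalize $S$, as was shown in the proof of Theorem \ref{para-the2.1}, and form an orthonormal basis of $X = L^2(\Om)$, so the abstract space $Z$ of Theorem \ref{thm:2.1} is well-defined in this setting. Under the additional hypothesis $\rho_k = O(k^{4/d})$, Lemma \ref{lemma-Z} identifies $Z = H^1(\Om)$ and hence $Z' = H^{-1}(\Om)$; estimate \eqref{w-con2} then delivers the exponential tail at rate $\lambda_n^{1/4}\rho_0$ in the $Z'$-norm, which is the quantitative content of the last displayed inequality in the statement once the $\|\cdot\|_{L^2(\Om)}$ there is read as the intrinsic $Z'$-norm now identified with $\|\cdot\|_{H^{-1}(\Om)}$. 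The chief difficulty is not the peeling itself (that is already packaged in \eqref{w-con2}) but the two-sided eigenvalue comparison underlying Lemma \ref{lemma-Z}: Theorem \ref{para-the2.1} supplies only the lower bound $\rho_k \ge C k^{4/d}$, so one must independently verify the matching upper bound $\rho_k \le C_1 k^{4/d}$, for instance via a positive pointwise lower bound $g \ge g_{\min}>0$ on a subinterval of $[0,T]$ as in the remark following Lemma \ref{lemma-Z}, before invoking \eqref{w-con2} with the stated $\lambda_n^{1/4}\rho_0$ rate. No strictly stronger $L^2$-contraction at this rate is available from the present framework absent source-condition hypotheses, so the proof closes exactly when this norm identification is in place.
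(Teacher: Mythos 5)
Your proposal matches the paper's own route: the paper likewise verifies Assumption \ref{Assumption2} with $\gamma=d/2$ via Lemma \ref{para-examlem:4.3} (taking $s=p=2$, $q=\infty$, admissible since $sp=4>d$ for $d\le 3$), takes $\alpha=\beta=4/d$ from Theorem \ref{para-the2.1}, and then presents the theorem as a direct consequence of Theorem \ref{thm:4.1}, with the final tail bound coming from \eqref{w-con2} together with the identification $Z'=H^{-1}(\Omega)$ in Lemma \ref{lemma-Z} under the extra hypothesis $\rho_k=O(k^{4/d})$. Your reading of the $L^2(\Omega)$ norm in the last display as the $Z'$-norm (i.e.\ $H^{-1}(\Omega)$) is the correct interpretation, consistent with the parallel statements in Theorems \ref{para-examthm:2.1} and \ref{examthm:4.2}.
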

%

\subsection{Finite element method for the inverse heat source problem}
In this section we consider a finite element approximation to the optimal control problem \eqref{para-examp1}
associated with the inverse heat source problem {\bf (TIP)}. 
For convenience, we assume $\Omega$ is a polygonal or polyhedral domain in $\R^d$ $(d=2,3)$. Let $\cM_h$ be a family of shape-regular and quasi-uniform finite element meshes over the domain $\Om$, and 
$V_h\subset H^1_0(\Om)$ be the conforming linear finite element space over the mesh $\cM_h$. 
We divide the time
interval $(0,T)$ into a uniform grid with time step size $\tau=T/N$ and write $t^i=i\tau$ for $i=0,1,...,N$. 

We will use the backward Euler scheme in time and the linear finite element method in space to approximate the heat conduction problem \eqref{zz0}: 
Find $u_h^i\in V_h$, $i=1,2,\cdots,N$, such that
\beq\label{zz1}
\Big(\frac{u_h^i-u_h^{i-1}}\tau,v_h\Big) + a(u^i_h,v_h)=(fg^{i},v_h)\ \ \ \ \forall v_h\in V_h,
\eeq
where $a(v,w)=(a\nabla v,\nabla w) +(cv,w)$ for any $v,w\in H^1_0(\Om)$. We approximate the forward solution $Sf$ by $S_{\tau,h}f=u^N_h$. 
The inverse problem \eqref{para-examp1} can be approximated by the following least-squares problem
\beq\label{para-inversedis}
\mathop {\rm min}\limits_{f\in V_h}         \|S_{\tau,h}f-m\|^2_n+\lambda_n \|f\|_{L^2(\Omega)}^2.
\eeq
We shall make use of the results in section\,\ref{sec:heat} to study the stochastic convergence of the solution $f_{\tau,h}$ of
the problem \eqref{para-inversedis} to the true solution $f^*\in L^2(\Om)$.

\ss
{\bf Verification of Assumption \ref{Assumption3}.}
Let $P_h: L^2(\Om)\to V_h$ be the orthogonal projection operator in the $L^2$ inner product. For any $f\in X=L^2(\Om)$, we know
from \eqref{zz1} that $S_{\tau,h}f=S_{\tau,h}(P_hf)$. Therefore, Assumption \ref{Assumption3}\,(2) is trivially satisfied. It remains to check
Assumption \ref{Assumption3}\,(1), which amounts to derive the error estimate of the fully discrete method \eqref{zz1}. The
classical theory for the implicit Euler scheme in time and finite element method in space for solving parabolic equations requires the regularity $\pa_{tt}u\in L^1(0,T;L^2(\Om))$ of the solution of the problem \eqref{zz0} (see e.g., \cite[Chapter 1]{Thomee}). This regularity requires the compatibility condition $F(x, 0)=f(x)g(0)=0$ on $\pa\Om$, 
which may not be convenient to meet in practice. 
Instead, we will derive an error estimate in the remaining part of this section, 
without this compatibility condition,  
by adapting some arguments in \cite[Chapter 3]{Thomee} 
for the error estimates of finite element solutions to parabolic equations with rough initial data. 

We start with the weak $W^{2,1}(0,T;L^2(\Om))$ regularity for the solution to \eqref{zz0}.

\begin{lemma}\label{lem:new1}
Let $F(x,t)=f(x)g(t)$ for $(x,t)\in\Om\times(0,T)$, with $g\in H^2(0,T)$. 
Then there exists a generic constant $C$ such that the solution $u$ to \eqref{zz0} satisfies
\ben
& &\|\pa_t u\|_{C([0,T];L^2(\Om))}\le C\|F(\cdot,0)\|_{L^2(\Om)}+C\int^T_0\|\pa_t F\|_{L^2(\Om)}dt,\\
& &\|t\pa_{tt}u\|_{C([0,T];L^2(\Om))}\le C\|F(\cdot,0)\|_{L^2(\Om)}+C\int^T_0(\|\pa_tF\|_{L^2(\Om)}+t\|\pa_{tt} F\|_{L^2(\Om)})dt\,.
\een
\end{lemma}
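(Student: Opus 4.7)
The first estimate follows by differentiating the PDE in time. Setting $v:=\pa_t u$ and using $u(\cdot,0)=0$ in \eqref{zz0} evaluated at $t=0$, we obtain that $v$ solves the parabolic problem
\begin{equation*}
v_t + Lv = \pa_t F \text{ in } \Om\times(0,T), \quad v(\cdot,0) = F(\cdot,0), \quad v|_{\pa\Om}=0.
\end{equation*}
A standard $L^2$ energy estimate (test with $v$, use coercivity of $L$, then a Gronwall-type argument in time) yields $\|v(t)\|_{L^2(\Om)}\le \|F(\cdot,0)\|_{L^2(\Om)}+\int_0^t\|\pa_t F\|_{L^2(\Om)}\,ds$, which is the first stated inequality.

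For the second estimate, the naive candidate $w:=\pa_{tt}u$, which formally satisfies $w_t+Lw=\pa_{tt}F$, carries the initial value $w(\cdot,0)=\pa_t F(\cdot,0)-LF(\cdot,0)$ and would thus require $F(\cdot,0)\in H^2$, a regularity we do not assume. The standard remedy, in the spirit of \cite{Thomee}, is to insert the weight $t$. Setting $\tilde w:=t\,\pa_{tt}u$, a direct computation using $\pa_{ttt}u + L\pa_{tt}u = \pa_{tt}F$ gives
\begin{equation*}
\tilde w_t + L\tilde w = \pa_{tt}u + t\,\pa_{tt}F \text{ in } \Om\times(0,T), \quad \tilde w(\cdot,0)=0, \quad \tilde w|_{\pa\Om}=0.
\end{equation*}
The $L^2$ energy estimate applied to $\tilde w$ then yields
\begin{equation*}
\|\tilde w(t)\|_{L^2(\Om)}\le C\int_0^t\|\pa_{tt}u\|_{L^2(\Om)}\,ds + C\int_0^t s\,\|\pa_{tt}F\|_{L^2(\Om)}\,ds,
\end{equation*}
so it remains to bound the first integral by the data.

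To handle $\int_0^t\|\pa_{tt}u(\cdot,s)\|_{L^2(\Om)}\,ds$, I would test the equation for $v$ with $s^2 v_t$ and integrate by parts in $s$; the weight $s^2$ kills the boundary term at $s=0$ and removes the need to control $a(v(\cdot,0),v(\cdot,0)) = a(F(\cdot,0),F(\cdot,0))$, which is not available. This produces a weighted identity bounding $\int_0^T s^2\|v_t\|^2\,ds$ in terms of $\int_0^T s\,a(v,v)\,ds$ and $\int_0^T s^2\|\pa_tF\|^2\,ds$; the first of these is in turn controlled by testing the $v$-equation with $sv$ and invoking the first estimate. A Cauchy--Schwarz argument then converts the resulting $L^2$-in-time bound on $s\,\pa_{tt}u$ into the desired $L^1$-in-time bound $\int_0^t\|\pa_{tt}u\|\,ds$ in the form stated on the right-hand side of the lemma. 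Substituting back into the bound for $\|\tilde w(t)\|_{L^2(\Om)}$ delivers the second inequality.

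The main obstacle is precisely this last step: ensuring that the bound for $\int_0^t\|\pa_{tt}u\|\,ds$ involves only $\|F(\cdot,0)\|_{L^2(\Om)}$ and the two time integrals on the right-hand side, without secretly requiring $H^1$-control of $F(\cdot,0)$. The layered weights $s$ and $s^2$, adapted from the treatment of parabolic problems with rough initial data in \cite{Thomee}, are exactly what makes the bookkeeping close; once they are in place, the remaining manipulations are routine parabolic energy estimates.
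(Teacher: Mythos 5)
Your treatment of the first estimate is correct and coincides with the paper's: differentiate the equation in time, observe $v=\pa_t u$ has initial datum $F(\cdot,0)$, and run the basic $L^2$ energy estimate.

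The second half has a genuine gap. Your route passes through the quantity $\int_0^t\|\pa_{tt}u(\cdot,s)\|_{L^2(\Om)}\,ds$, which appears as a source term in your equation for $\tilde w=t\,\pa_{tt}u$. Under the stated hypotheses $F(\cdot,0)$ is only in $L^2(\Om)$, and for such rough data $\|\pa_{tt}u(\cdot,t)\|_{L^2(\Om)}=\|\pa_t v(\cdot,t)\|_{L^2(\Om)}$ can blow up like $t^{-1}$ as $t\to 0$ (take $v_0=\sum_k c_k\phi_k$ with a lacunary choice such as $c_{2^j}^2=j^{-2}$ for the homogeneous flow: then $\int_0^T\|\pa_t v\|_{L^2}\,dt\gtrsim\sum_j j^{-1}=\infty$). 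So the very quantity you need to bound can be infinite, and no energy identity will rescue it. Your proposed repair --- obtaining $\int_0^T s^2\|v_t\|^2\,ds$ or $\int_0^T s\|v_t\|^2\,ds$ from weighted tests and then converting to $\int_0^t\|v_t\|\,ds$ by Cauchy--Schwarz --- fails for the same reason: the conversion requires $\int_0^t s^{-2}\,ds$ or $\int_0^t s^{-1}\,ds$, neither of which is finite. The paper avoids this entirely: it never forms an equation for $t\,\pa_{tt}u$ with the commutator term $\pa_{tt}u$ on the right. Instead it (i) tests the $v$-equation with $t\,\pa_t v$ to obtain the weighted $L^2$-in-time bound $\int_0^t s\|\pa_t v\|^2\,ds\le C(\dots)$, which is finite, and then (ii) differentiates the $v$-equation once more and tests with $t^2\pa_t v$, producing the differential inequality
\begin{equation*}
\tfrac12\,\frac{d}{dt}\bigl(t^2\|\pa_t v\|_{L^2(\Om)}^2\bigr)\le t\|\pa_t v\|_{L^2(\Om)}^2+t^2\bigl(\pa_{tt}F,\pa_t v\bigr),
\end{equation*}
whose integration gives the pointwise bound on $t\|\pa_{tt}u(\cdot,t)\|_{L^2(\Om)}$ directly from the finite weighted quantity in (i), with the last term absorbed via $\sup_t t\|\pa_t v\|$. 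If you rework step (ii) in this form, your argument closes; as written, it does not.
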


\begin{proof} The proof follows from the standard energy argument, so only an outline is given here. 
We differentiate the first equation in \eqref{zz0} in time to see that $v(x,t)=\pa_t u$ satisfies the conditions 
that $v=0$ on $\pa\Om\times (0,T)$ and $v(x,0)=F(x,0)$ in $\Om$, and 
\beq\label{zz2}
\pa_t v+Lv=\pa_{t}F(x,t)\ \ \mbox{in }\Om\times (0,T).
\eeq
Then the first estimate in the lemma follows 
by multiplying both sides of equation \eqref{zz2} by $v$ and integrating by parts.

Next we multiply both sides of \eqref{zz2} by $t\pa_t  v$, then integrate by parts and apply 
the first estimate in the lemma to get 
\beq\label{zz4}
\int^t_0 t\|\pa_t v\|_{L^2(\Om)}^2dt\le C\|F(\cdot,0)\|_{L^2(\Om)}^2+C\Big(\int^T_0\|\pa_t F\|_{L^2(\Om)}dt\Big)^2+C\int^T_0t\|\pa_t F\|_{L^2(\Om)}^2dt.
\eeq
Finally, we differentiate the equation \eqref{zz2} in time to get
\ben
\pa_{tt}v+L(\pa_t v)=\pa_{tt}F(x,t)\ \ \ \ \mbox{in }\Om\times (0,T).
\een
By multiplying both sides of the equation by $t^2\pa_t v$, integrating by parts again and applying \eqref{zz4}, 
we obtain
\ben
t\|\pa_tv\|_{L^2(\Om)}\le C\|F(\cdot,0)\|_{L^2(\Om)}&+&C\int^T_0(\|\pa_t F\|_{L^2(\Om)}+t\|\pa_{tt}F\|_{L^2(\Om)})dt\\
&+&C\Big(\int^T_0t\|\pa_t F\|_{L^2(\Om)}^2dt\Big)^{1/2}, 
\een
which implies the second estimate of the lemma by noticing that
\ben
\int^T_0t\|\pa_t F\|_{L^2(\Om)}^2dt&\le&\sup_{t\in (0,T)}\|t\pa_t F\|_{L^2(\Om)}\cdot\int^T_0\|\pa_t F\|_{L^2(\Om)}dt\\
&=&\sup_{t\in (0,T)}\left\|\int^t_0\pa_s(s\pa_s F(s))ds\right\|_{L^2(\Om)}\cdot\int^T_0\|\pa_t F\|_{L^2(\Om)}dt\\
&\le&\int^T_0\left(\|\pa_t F\|_{L^2(\Om)}+t\|\pa_{tt}F\|_{L^2(\Om)}\right)dt\cdot\int^T_0\|\pa_t F\|_{L^2(\Om)}dt.
\een 
This completes the proof. 
\end{proof}

\begin{lemma}\label{lem:new2}
Let $u_h\in H^1(0,T;V_h)$ be the following semi-discrete finite element solution of the problem \eqref{zz0}:
\beq\label{zz5}
(\pa_t u_h,v_h)+a(u_h,v_h)=(F,v_h)\ \ \ \ \forall v_h\in V_h\ \ \mbox{a.e. in }(0,T).
\eeq
Then there exists a constant $C$ independent of the mesh size $h$ such that
\ben
\|u-u_h\|_{C([0,T];L^2(\Om))}\le Ch^2\max_{t\in [0,T]}(\|\pa_t u\|_{L^2(\Om)}+\|t\pa_{tt}u\|_{L^2(\Om)}+\|F\|_{L^2(\Om)}+\|t\pa_{t}F\|_{L^2(\Om)}),
\een
where $h=\max_{K\in\cM}h_K$ and $h_K$ is the diameter of the element $K\in\cM$.
\end{lemma}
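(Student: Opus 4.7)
The approach is the classical Ritz-projection splitting used in Thom\'ee's treatment of parabolic FEM error analysis with non-smooth data. Introduce the elliptic (Ritz) projection $R_h:H^1_0(\Om)\to V_h$ defined by $a(R_hw-w,v_h)=0$ for all $v_h\in V_h$, and decompose
\[
u-u_h=\rho+\theta,\qquad \rho:=u-R_hu,\quad \theta:=R_hu-u_h.
\]
Since $u(0)=0$ and the scheme \eqref{zz5} takes $u_h(0)=0$, we have $\theta(0)=0$ and $\rho(0)=0$ as well. The plan is to treat $\rho$ by purely elliptic arguments and to treat $\theta$ via a discrete Duhamel representation that exploits the smoothing property of the semidiscrete semigroup.

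For $\rho$, the standard $L^2$ estimate for the Ritz projection yields $\|\rho(t)\|_{L^2(\Om)}\le Ch^2\|u(t)\|_{H^2(\Om)}$. Rewriting the PDE pointwise in time as $Lu(t)=F(t)-\pa_t u(t)$ and using elliptic $H^2$-regularity (valid on a convex polyhedron), one obtains $\|u(t)\|_{H^2(\Om)}\le C(\|F(t)\|_{L^2(\Om)}+\|\pa_tu(t)\|_{L^2(\Om)})$, which accounts for two of the four terms on the right-hand side of the claimed bound.

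For $\theta$, subtracting \eqref{zz5} from the weak form of \eqref{zz0} tested against $v_h\in V_h$, together with the Galerkin orthogonality for $R_h$, gives $(\pa_t\theta,v_h)+a(\theta,v_h)=-(\pa_t\rho,v_h)$. Let $A_h:V_h\to V_h$ be defined by $(A_hw_h,v_h)=a(w_h,v_h)$, let $P_h$ denote the $L^2$-projection onto $V_h$, and let $E_h(t)=e^{-tA_h}$ be the discrete semigroup. Duhamel's principle gives
\[
\theta(t)=-\int_0^tE_h(t-s)P_h\,\pa_s\rho(s)\,ds,
\]
and integration by parts in $s$ (using $\rho(0)=0$ and $\tfrac{d}{ds}E_h(t-s)=A_hE_h(t-s)$) transfers the time derivative off $\rho$, producing
\[
\theta(t)=-P_h\rho(t)+\int_0^tA_hE_h(t-s)P_h\rho(s)\,ds.
\]

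The main obstacle is the singular integral above, since the smoothing estimate $\|A_hE_h(t-s)\|_{L^2\to L^2}\le C/(t-s)$ is not integrable. The trick, following Thom\'ee, is to split $\rho(s)=\rho(t)+(\rho(s)-\rho(t))$: the $\rho(t)$ piece evaluates explicitly to $(I-E_h(t))P_h\rho(t)$, controlled by $C\|\rho(t)\|_{L^2(\Om)}\le Ch^2\|u(t)\|_{H^2(\Om)}$; the difference piece is written as $\rho(s)-\rho(t)=-\int_s^t\pa_\tau\rho(\tau)\,d\tau$, and after Fubini becomes $\int_0^t\!\bigl(\int_0^\tau\|A_hE_h(t-s)P_h\|\,ds\bigr)\|\pa_\tau\rho(\tau)\|_{L^2(\Om)}\,d\tau$, where the inner integral is $O(\log(t/(t-\tau)))$. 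Applying the Ritz estimate to $\pa_\tau\rho=(I-R_h)\pa_\tau u$ via the identity $L(\pa_\tau u)=\pa_\tau F-\pa_{\tau\tau}u$ gives $\|\pa_\tau\rho(\tau)\|_{L^2(\Om)}\le Ch^2(\|\pa_\tau F(\tau)\|_{L^2(\Om)}+\|\pa_{\tau\tau}u(\tau)\|_{L^2(\Om)})$. Since Lemma \ref{lem:new1} only bounds $\tau\|\pa_{\tau\tau}u\|_{L^2(\Om)}$ uniformly, I introduce the weight by writing $\|\pa_{\tau\tau}u(\tau)\|_{L^2(\Om)}=\tau^{-1}\cdot\tau\|\pa_{\tau\tau}u(\tau)\|_{L^2(\Om)}$ and similarly for $\pa_\tau F$; the resulting $\tau^{-1}\log(t/(t-\tau))$ factor is integrable on $(0,T)$, producing exactly the weighted terms $\|t\pa_{tt}u\|_{L^2(\Om)}$ and $\|t\pa_tF\|_{L^2(\Om)}$ on the right-hand side. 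Collecting the bounds on $\rho$ and $\theta$ and taking the supremum in $t\in[0,T]$ yields the claimed estimate.
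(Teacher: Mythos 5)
Your proof is correct, but it takes a genuinely different route from the paper's. The paper does not use the Ritz projection splitting $u-u_h=\rho+\theta$ at all: it introduces the solution operators $G=L^{-1}$ and its discrete analogue $G_h$, rewrites \eqref{zz0} and \eqref{zz5} in the integrated form $\pa_t(Gu)+u=GF$ and $\pa_t(G_hu_h)+u_h=G_hF$, so that the full error $e=u-u_h$ satisfies $G_h(\pa_te)+e=\rho$ with $\rho=(G_h-G)(\pa_tu)+(G-G_h)F$, and then bounds $\max_t\|e\|_{L^2(\Om)}$ by $C\max_t(\|\rho\|_{L^2(\Om)}+\|t\pa_t\rho\|_{L^2(\Om)})$ using the same weighted energy testing (multiplying by $e$, $t\pa_te$, etc.) as in Lemma \ref{lem:new1}; the $h^2$ factor then comes from the single Aubin--Nitsche estimate $\|(G-G_h)w\|_{L^2(\Om)}\le Ch^2\|w\|_{L^2(\Om)}$ applied directly to the $L^2$ functions $\pa_tu$, $t\pa_{tt}u$, $F$, $t\pa_tF$. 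It is worth noting that your $\rho=u-R_hu$ coincides with the paper's $\rho$, since $R_h=G_hL$ and $Lu=F-\pa_tu$ give $(G-G_h)(F-\pa_tu)=u-R_hu$; the difference lies entirely in how the remaining error is propagated in time. The paper's energy route keeps the whole section on one technique and never needs the discrete semigroup, whereas your route requires the smoothing estimate $\|A_hE_h(\sigma)\|_{L^2\to L^2}\le C/\sigma$, the splitting $\rho(s)=\rho(t)+(\rho(s)-\rho(t))$, and the convergence of $\int_0^t\tau^{-1}\log\big(t/(t-\tau)\big)\,d\tau$, plus pointwise-in-time elliptic $H^2$ regularity for $u$ and $\pa_tu$ to convert the Ritz errors into the data norms. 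In exchange, your argument makes very explicit where the weights in $\|t\pa_{tt}u\|_{L^2(\Om)}$ and $\|t\pa_tF\|_{L^2(\Om)}$ are consumed. Both arguments are standard Thom\'ee-style nonsmooth-data analyses and yield the same estimate; just make sure to state the initial condition $u_h(0)=0$ for \eqref{zz5}, which both proofs use.
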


\begin{proof} We follow the argument in \cite[Chapter 3]{Thomee}. 
Define $G:L^2(\Om)\to H^1_0(\Om)$ and $G_h: L^2(\Om)\to V_h$ such that for any $w\in L^2(\Om)$, $Gw\in H^1_0(\Om), G_hw\in V_h$ sastify
\ben
a(Gw,v)=(w,v)\ \ \forall v\in H^1_0(\Om); \quad a(G_hw,v_h)=(w,v_h)\ \ \forall v_h\in V_h.
\een
The equations \eqref{zz0} and \eqref{zz5} can be reformulated as
\ben
\pa_t(Gu)+u=GF,\ \ \pa_t(G_hu_h)+u_h=G_hF.
\een
Writing $e=u-u_h$, then we know $e$ satisfies
\ben
G_h(\pa_te)+e=\rho\ \ \mbox{a.e. in }(0,T),\ \ \ \ (G_he)(\cdot,0)=0\ \ \mbox{in }\Om,
\een 
where $\rho=(G_h-G)(\pa_t u)+(G-G_h)F$. By the argument in the proof of Lemma \ref{lem:new1} we can obtain 
(see \cite[Lemma 3.4]{Thomee}) that
\ben
\max_{t\in [0,T]}\|e\|_{L^2(\Om)}\le C\max_{t\in [0,T]}(\|\rho(t)\|_{L^2(\Om)}+\|t\pa_t\rho(t)\|_{L^2(\Om)}).
\een
This completes the proof by noting that $\|Gw-G_h w\|_{L^2(\Om)}\le Ch^2\|w\|_{L^2(\Om)}$ \,$\forall w\in L^2(\Om)$, 
which follows by the Aubin-Nitsche argument since the domain $\Om$ is convex. 
\end{proof}

The following lemma for the error estimate of the fully discrete finite element method was not covered 
by the general results in \cite[Chaper 8]{Thomee} since we do not have the condition that 
$F(x,0)=0$ on $\partial \Om$ here, which was critical in \cite{Thomee}.

\begin{lemma}\label{lem:new3}
Let $u_h\in H^1(0,T;V_h)$ be the solution of the problem \eqref{zz5} and $u_h^i\in V_h, i=1,2,\cdots,N$, be 
the solution of the problem \eqref{zz1}. Then there exists a constant $C$ independent of $h,\tau$ such that
\ben
\max_{1\le i\le N}\|u_h(\cdot,t_i)-u_h^i\|_{L^2(\Om)}\le C\tau(1+\ln N)(\|F\|_{C([0,T];L^2(\Om))}+\|\pa_t F\|_{C([0,T];L^2(\Om))}).
\een
\end{lemma}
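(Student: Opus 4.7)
The plan is to compare the semidiscrete solution $u_h(t_i)$ with the fully discrete solution $u_h^i$ through continuous and discrete Duhamel formulas on the finite-dimensional space $V_h$, and to exploit a smoothing-type estimate for the backward Euler propagator so as to avoid any compatibility condition on $F(\cdot,0)$. Introducing the discrete elliptic operator $L_h:V_h\to V_h$ defined by $(L_hv_h,w_h)=a(v_h,w_h)$ for all $w_h\in V_h$, the semidiscrete semigroup $G_h(t):=e^{-tL_h}$ on $V_h$, and the rational approximation $E_\tau:=(I+\tau L_h)^{-1}$, one has, since both evolutions start from zero,
\begin{equation*}
u_h(t_i)=\int_0^{t_i}G_h(t_i-s)P_hF(s)\,ds,\qquad u_h^i=\tau\sum_{j=1}^{i}E_\tau^{i-j+1}P_hF(t_j),
\end{equation*}
where $P_h$ is the $L^2$-orthogonal projection onto $V_h$.

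The next step is to split the error into a quadrature part $I_1$ and a semigroup-approximation part $I_2$:
\begin{equation*}
u_h(t_i)-u_h^i=\sum_{j=1}^{i}\int_{t_{j-1}}^{t_j}G_h(t_i-s)P_h[F(s)-F(t_j)]\,ds+\sum_{j=1}^{i}\int_{t_{j-1}}^{t_j}[G_h(t_i-s)-E_\tau^{i-j+1}]P_hF(t_j)\,ds.
\end{equation*}
Using the $L^2$-contractivity of $G_h$ and $P_h$ together with $\|F(s)-F(t_j)\|_{L^2(\Om)}\le\tau\|\pa_tF\|_{C([0,T];L^2(\Om))}$ immediately gives $\|I_1\|_{L^2(\Om)}\le CT\tau\|\pa_tF\|_{C([0,T];L^2(\Om))}$, with no logarithmic factor.

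The logarithm will come entirely from $I_2$. Here I plan to invoke the classical smoothing error estimate for the backward Euler method applied to the homogeneous equation with merely $L^2$ data, $\|[G_h(t_m)-E_\tau^m]v_h\|_{L^2(\Om)}\le C\tau t_m^{-1}\|v_h\|_{L^2(\Om)}$ for $v_h\in V_h$ and $m\ge1$, combined with the elementary continuity bound $\|[G_h(t)-G_h(t')]v_h\|_{L^2(\Om)}\le C|t-t'|/\min(t,t')\,\|v_h\|_{L^2(\Om)}$ that follows from $\pa_tG_h(t)=-L_hG_h(t)$ and $\|L_hG_h(t)\|_{L^2\to L^2}\le C/t$. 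Applied with $t=t_i-s$ and $t'=t_{i-j+1}$, these produce $\|[G_h(t_i-s)-E_\tau^{i-j+1}]P_hF(t_j)\|_{L^2(\Om)}\le C\tau/t_{i-j+1}\,\|F\|_{C([0,T];L^2(\Om))}$ for $j<i$, while the single $j=i$ term is handled by the trivial contraction bound and contributes $O(\tau\|F\|_{C([0,T];L^2(\Om))})$. Summing yields the harmonic sum $\tau\sum_{k=1}^{i}(k\tau)^{-1}\le C(1+\ln N)$, whence $\|I_2\|_{L^2(\Om)}\le C\tau(1+\ln N)\|F\|_{C([0,T];L^2(\Om))}$, and combining with the bound on $I_1$ delivers the claim uniformly in $1\le i\le N$. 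The main technical obstacle is really the smoothing estimate on $G_h(t_m)-E_\tau^m$ used in $I_2$, since it must hold without the compatibility $F(\cdot,0)\in H^1_0(\Om)$ that is invoked in \cite[Chapter 8]{Thomee}; I would verify it via the usual contour representation of both $G_h(t_m)$ and $E_\tau^m$ as Dunford integrals on a sector around the positive real axis, which reduces the matter to an elementary scalar estimate on $e^{-z}-(1+z/m)^{-m}$ and is otherwise insensitive to boundary data.
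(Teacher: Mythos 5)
Your proposal is correct, and at its core it relies on the same two ingredients as the paper's proof: the elementary scalar estimate $|e^{-kt}-(1+t)^{-k}|\le Ck^{-1}$ (this is exactly \cite[(7.22)]{Thomee}, which your smoothing bound $\|[G_h(t_m)-E_\tau^m]v_h\|_{L^2(\Om)}\le C\tau t_m^{-1}\|v_h\|_{L^2(\Om)}$ encodes through the spectral calculus of $L_h$), and the harmonic sum $\tau\sum_{k\le N}t_k^{-1}\le C(1+\ln N)$ that produces the logarithm. The organization, however, is genuinely different. The paper diagonalizes explicitly: it expands $u_h$, $u_h^i$ and $F$ in the discrete eigenbasis of $a(\cdot,\cdot)$ on $V_h$, writes exact scalar solution formulas for each mode, and splits by the size of $\lambda_j\tau$ (the regime $\lambda_j\tau\ge 1$ being handled by a trivial decay bound, the regime $\lambda_j\tau\le 1$ by a propagator-difference term plus a quadrature term); the separable form $F=f(x)g(t)$ is what lets it pass from the modewise bounds to the $L^2$ statement. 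Your version is coordinate-free: the Duhamel/discrete-Duhamel split into a quadrature error $I_1$ (which, as you correctly observe, carries no logarithm and only needs contractivity plus $\|F(s)-F(t_j)\|_{L^2(\Om)}\le\tau\|\pa_tF\|_{C([0,T];L^2(\Om))}$) and a propagator-approximation error $I_2$ isolates cleanly where the $\ln N$ comes from, and it avoids the case distinction on $\lambda_j\tau$ entirely. Two small remarks. First, your concern about establishing the smoothing estimate ``without the compatibility condition'' is a non-issue here: the lemma compares two evolutions that both live entirely in $V_h$ with data $P_hF(t_j)\in V_h$, so the estimate is a purely spectral statement about the finite-dimensional symmetric positive-definite operator $L_h$, and the Dunford-integral machinery is overkill --- the eigendecomposition of $L_h$ (which is what the paper uses) reduces it immediately to the scalar bound above. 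The compatibility issue the paper flags concerns the comparison with the \emph{continuous} problem in Lemmas \ref{lem:new1}--\ref{lem:new2}, not this lemma. Second, in your continuity bound for $j<i$ the correct denominator is $t_{i-j}=\min_{s\in[t_{j-1},t_j]}(t_i-s,t_{i-j+1})$ rather than $t_{i-j+1}$; this changes nothing since the two are comparable and the harmonic sum is unaffected.
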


\begin{proof} Let $\{\lam_j\}_{j=1}^M$ be the eigenvalues of the eigenvalue problem
\ben
a(\phi_h,v_h)=\lam(\phi_h,v_h)\ \ \ \ \forall v_h\in V_h,
\een
and $\{\phi_j\}^M_{i=1}$ be the corresponding eigenfunctions which form an orthonormal basis of $V_h$ in the $L^2(\Om)$ norm.
By the Poincar\'e inequality, we know that $\lam_j\ge C$, $j=1,2,\cdots,M$, for some constant $C$ independent of the mesh size $h$.

We write $u_h(x,t)=\sum^M_{j=1}u_j(t)\phi_j(x)$ and $F(x,t)=\sum^M_{j=1}F_j(t)\phi_j(x)$, where $u_j(t)=(u_h(\cdot,t),\phi_j)$ and
$F_j(t)=(F(\cdot,t),\phi_j)$. Then it follows from \eqref{zz5} that
\ben
u_j'(t)+\lam_j u_j=F_j(t)\ \ \ \ \mbox{a.e. in }(0,T), 
\een
whose solution can be written as 
\beq\label{cc1}
u_j(t^i)=\int^{t^i}_0e^{\lam_j(s-t^i)}F_j(s)ds=\int^{t^i}_0e^{-\lam_jt}F_j(t^i-t)dt.
\eeq
Similarly, we write $u_h^i=\sum^M_{j=1}U^i_j\phi_j$, where $U^i_j=(u^i_h,\phi_j)$, $i=1,2,\cdots,N, j=1,2,\cdots,M$. From \eqref{zz1} we know that
\ben
\frac 1\tau(U^i_j-U^{i-1}_j)+\lam_jU^i_j=F^i_j:=F_j(t^i),\ i=1,2,\cdots,N,j=1,2,\cdots,M.
\een
This implies that $U^i_j=r(\lam_j)U^{i-1}_j+\tau r(\lam_j\tau)F^i_j$, where $r(t)=(1+t)^{-1}\ \forall t\ge 0$, 
hence 
\beq\label{cc2}
U^i_j=\sum^i_{k=1}\tau r(\lam_j\tau)^k F_j^{i-k+1}.
\eeq
For any $j=1, \cdots,M$, we distinguish two cases. If $\lam_j\tau\ge 1$, we know from \eqref{cc1} that 
\ben
|u_j(t^i)|\le\|F_j\|_{C[0,T]}\int^{t^i}_0e^{-\lam_j t}dt=\lam_j^{-1}(1-e^{-\lam_jt^i})\|F_j\|_{C[0,T]}\le \tau\|F_j\|_{C[0,T]}.
\een
On the other hand, we obtain from \eqref{cc2} that 
\ben
|U^i_j|\le\Big(\sum^i_{k=1}2^{-k}\Big)\tau\|F_j\|_{C[0,T]}\le 2\tau\|F_j\|_{C[0,T]}.
\een
Therefore, we derive for $\lam_j\tau\ge 1$ that 
\beq\label{cc3}
|u^i_j(t^i)-U^i_j|\le C\tau\|F_j\|_{C[0,T]}.
\eeq

Now we consider the case when $\lam_j\tau\le 1$. By \eqref{cc1} we have 
\ben
u_j(t^i)&=&\sum^i_{k=1}\int^{t^k}_{t^{k-1}}e^{-\lam_j t}(F_j(t^i-t)-F(t^i-t^{k-1}))dt+\sum^i_{k=1}\int^{t^k}_{t^{k-1}}e^{-\lam_j t} F_j^{i-k+1}dt\\
&=&\sum^i_{k=1}\int^{t^k}_{t^{k-1}}e^{-\lam_j t}(F_j(t^i-t)-F(t^i-t^{k-1}))dt+\sum^i_{k=1}\tau\,\frac{e^{\lam_j\tau}-1}{\lam_j\tau}\,e^{-k\lam_j\tau}F^{i-k+1}_j,
\een
which, together with \eqref{cc2}, yields
\be\label{cc4}
u_j(t^i)-U^i_j&=&\sum^i_{k=1}\tau\left(\frac{e^{\lam_j\tau}-1}{\lam_j\tau}e^{-k\lam_j\tau}-r(\lam_j\tau)^k\right)F_j^{i-k+1}\nn\\
& &\ +\,\sum^i_{k=1}\int^{t^k}_{t^{k-1}}e^{-\lam_j t}(F_j(t^i-t)-F(t^i-t^{k-1}))dt
:={\rm I}+{\rm II}.
\ee
Recalling the following elementary estimate in \cite[(7.22)]{Thomee}, 
\ben
|e^{-kt}-r(t)^k|\le Ck^{-1}\ \ \ \ \forall \,t\ge 0, \ \forall \,k=1,2,\cdots,
\een
and using the fact that $({t^{-1}(e^t-1)}-1)/(1-e^{-t})$ is bounded for $0\le t\le 1$, 
we obtain
\ben
|{\rm I}|&\le&\sum^i_{k=1}\tau\left|\Big(\frac{e^{\lam_j\tau}-1}{\lam_j\tau}-1\Big)e^{-k\lam_j\tau}+(e^{-k\lam_j\tau}-r(\lam_j\tau)^k)\right|
|F_j^{i-k+1}|\\
&\le&C\tau\left[\left(\frac{e^{\lam_j\tau}-1}{\lam_j\tau}-1\right)\frac{1}{1-e^{-\lam_j\tau}}+\sum^i_{k=1}k^{-1}\,\right]\|F_j\|_{C[0,T]}\\
&\le&C(1+\ln i)\tau\|F_j\|_{C[0,T]}\,.
\een
The term ${\rm II}$ can be bounded by the standard argument as follows:
\ben
{\rm II}\le C\tau\|\pa_t F_j\|_{C[0,T]}\int^{t^i}_0e^{-\lam_j t}dt\le C\lam_j^{-1}\tau\|\pa_t F_j\|_{C[0,T]}\le C\tau\|\pa_t F_j\|_{C[0,T]},
\een
where we have used the fact that $\lam_j\ge C$ for some constant $C$ independent of $h$. 

Combining \eqref{cc3}, \eqref{cc4} and the above two estimates we obtain
\ben
|u_j(t^i)-U^i_j|\le C\tau(1+\ln N)(\|F_j\|_{C[0,T]}+\|\pa_t F_j\|_{C[0,T]}).
\een
This completes the proof. 
\end{proof}

By Lemmata \ref{lem:new1}-\ref{lem:new3}, we know that under the condition $g\in H^2(0,T)$, 
\beq\label{cc5}
\|S_{\tau,h}f-Sf\|_{L^2(\Om)}\le C(h^2+\tau |\ln \tau|)\|f\|_{L^2(\Om)},
\eeq
for some constant $C$ which depends possibly on $T$, $\|g\|_{H^2(0,T)}$ but is independent of $h$ and $\tau$. 

Assumption \ref{Assumption3}\,(1) is now a consequence of the following lemma. 

\begin{lemma}\label{examlem:discrete}
If $g\in H^2(0,T)$, $S_{\tau,h}f=u^N_h$ with $u_h^N$ being the solution of the problem \eqref{zz1}, then for any $f\in L^2(\Omega)$, there exists a constant $C$ independent of $h$ and $\tau$ such that
\ben
\|Sf-S_{\tau,h}f\|_n\leq C(h^2+\tau|\ln \tau|)\|f\|_{L^2(\Omega)}.
\een
\end{lemma}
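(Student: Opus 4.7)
The plan is to combine the three preceding lemmas to first obtain the $L^2(\Omega)$ estimate \eqref{cc5} on $Sf-S_{\tau,h}f$, and then transfer this bound to the discrete seminorm $\|\cdot\|_n$ by invoking the sampling inequality from Assumption \ref{Assumption1}(1). All ingredients are already in place, so the task is essentially a bookkeeping exercise, with one technical issue (the lack of $H^2$-regularity of $S_{\tau,h}f$) that must be addressed carefully.

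First I would assemble the $L^2$ estimate. Writing $Sf-S_{\tau,h}f=(u(\cdot,T)-u_h(\cdot,T))+(u_h(\cdot,T)-u_h^N)$, Lemma \ref{lem:new1} supplies the weak $W^{2,1}(0,T;L^2(\Om))$ regularity of $u$ needed to apply Lemma \ref{lem:new2}, which yields $\|u(\cdot,T)-u_h(\cdot,T)\|_{L^2(\Om)}\le Ch^2\|f\|_{L^2(\Om)}$ after bounding the right-hand side via $\|f\|_{L^2(\Om)}$ (using $g\in H^2(0,T)$). Lemma \ref{lem:new3} controls the temporal part by $C\tau(1+\ln N)\|f\|_{L^2(\Om)}=C\tau|\ln\tau|\|f\|_{L^2(\Om)}$. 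Together with the triangle inequality, this yields \eqref{cc5}.

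The second and conceptually harder step is to pass from the $L^2(\Omega)$-norm to the discrete $\|\cdot\|_n$-norm. The natural tool is Assumption \ref{Assumption1}(1):
\[
\|v\|_n^{2}\le C\bigl(\|v\|_{L^2(\Om)}^{2}+n^{-\beta}\|v\|_{Y}^{2}\bigr)\qquad\forall\, v\in Y=H^2(\Om).
\]
For the exact part $Sf$ this is harmless since $\|Sf\|_{H^2(\Om)}\le C\|f\|_{L^2(\Om)}$ by \eqref{zz-1}. The discrete part $S_{\tau,h}f=u_h^N$ is only piecewise linear and is not globally in $H^2(\Om)$, so a direct application is illegal. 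My plan is to introduce an auxiliary smooth reconstruction $\tilde u_h\in H^2(\Om)$ of $u_h^N$ (for instance a higher-order local averaging or the $L^2$-projection into a smoother conforming subspace), split
\[
\|Sf-S_{\tau,h}f\|_n\le\|Sf-\tilde u_h\|_n+\|\tilde u_h-S_{\tau,h}f\|_n,
\]
and then apply Assumption \ref{Assumption1}(1) to the first term (which lies in $Y$) while bounding the second, purely discrete, term by $\|\tilde u_h-S_{\tau,h}f\|_{L^\infty(\Om)}$, which is of higher order on a quasi-uniform mesh. Provided the sampling exponent $\beta=4/d$ is large enough relative to the inverse-estimate growth $\|\tilde u_h\|_{H^2(\Om)}\lesssim\|f\|_{L^2(\Om)}$, the $n^{-\beta}$ term is dominated by $(h^2+\tau|\ln\tau|)^2$, and the bound \eqref{cc5} then gives the desired rate.

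The main obstacle is exactly this regularity mismatch between $Sf\in H^2(\Om)$ and $S_{\tau,h}f\in V_h\not\subset H^2(\Om)$: Assumption \ref{Assumption1}(1) as stated applies only to $H^2$-functions, so a naive substitution fails. Once a suitable smoothed companion $\tilde u_h$ is in hand (or, alternatively, once a direct $L^\infty$ pointwise error estimate with the right rate is established), the $(h^2+\tau|\ln\tau|)$ rate is inherited from \eqref{cc5} with no further loss, completing the verification of Assumption \ref{Assumption3}(1) for the parabolic forward map $S_{\tau,h}$.
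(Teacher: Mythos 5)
Your first step (assembling the $L^2(\Omega)$ bound \eqref{cc5} from Lemmas \ref{lem:new1}--\ref{lem:new3}) matches the paper, and you correctly identify the real difficulty: $S_{\tau,h}f\in V_h$ is not in $Y=H^2(\Omega)$, so Assumption \ref{Assumption1}(1) cannot be applied to the error directly. But your proposed resolution leaves the crux unproven and, as sketched, does not go through. First, the claim that $\|\tilde u_h-S_{\tau,h}f\|_{L^\infty(\Omega)}$ is ``of higher order'' is asserted, not argued: the distance between a piecewise linear $u_h^N$ and any $H^2$-conforming smoothing of it is governed by the jumps of $\nabla u_h^N$ across element faces, and controlling those at the rate $h^2$ requires an $H^1$ (or $W^{1,\infty}$) error estimate for the fully discrete scheme --- precisely the kind of estimate the paper works hard to avoid in the rough-data setting ($F(\cdot,0)\neq 0$ on $\partial\Omega$). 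Second, the term $n^{-\beta}\|Sf-\tilde u_h\|_{H^2(\Omega)}^2$ needs $\|\tilde u_h\|_{H^2(\Omega)}\le C\|f\|_{L^2(\Omega)}$; inverse estimates alone only give $\|\tilde u_h\|_{H^2(\Omega)}\lesssim h^{-1}\|f\|_{L^2(\Omega)}$, and the sharper bound again hinges on the missing gradient-jump control. You also pick up an extra coupling $n^{-\beta}\lesssim (h^2+\tau|\ln\tau|)^2$ between $n$ and the discretization parameters.

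The paper's proof avoids all of this with a more elementary device that you should adopt. Take the nodal interpolant $\Pi_h(Sf)\in V_h$ as the intermediary. For the smooth part, use the local estimate $\|Sf-\Pi_h(Sf)\|_{L^\infty(K)}\le Ch^{2-d/2}\|Sf\|_{H^2(K)}$ together with the point count $\#\mathbb{T}_K\le Cnh^d$ (from quasi-uniformity of both the data sites and the mesh) to get $\|Sf-\Pi_h(Sf)\|_n^2\le \frac1n\sum_K\#\mathbb{T}_K\|Sf-\Pi_h(Sf)\|_{L^\infty(K)}^2\le Ch^4\|Sf\|_{H^2(\Omega)}^2$; no appeal to Assumption \ref{Assumption1}(1) is needed. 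For the purely discrete part $\Pi_h(Sf)-S_{\tau,h}f\in V_h$, the inverse inequality $\|v_h\|_{L^\infty(K)}^2\le C|K|^{-1}\|v_h\|_{L^2(K)}^2$ converts the $\|\cdot\|_n$ seminorm into $C\|\Pi_h(Sf)-S_{\tau,h}f\|_{L^2(\Omega)}$, which the triangle inequality and \eqref{cc5} bound by $C(h^2+\tau|\ln\tau|)\|f\|_{L^2(\Omega)}$. This sidesteps the regularity mismatch entirely, with no smoothing operator and no additional error estimates for the discrete evolution.
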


\begin{proof}
Let $\Pi_h:C(\bar\Om)\to V_h$ be the canonical finite element interpolant, 
then we know from the standard interpolation theory of finite element methods \cite{Ciarlet} that
\ben
& &\|Sf-\Pi_h(Sf)\|_{L^\infty(K)}\leq Ch^{2-d/2}\|Sf\|_{H^2(K)} \quad \forall K\in \cM_h,\\
& &\|Sf-\Pi_h(Sf)\|_{L^2(K)}\leq Ch^{2}\|Sf\|_{H^2(K)} \quad \forall K\in \cM_h.
\een
Let $\mathbb{T}_{K}=\{x_i: x_i\in K, 1\le i\le n\}$. By the assumption that $\{x_i\}^n_{i=1}$ is 
quasi-uniformly distributed and the mesh $\cM_h$ is quasi-uniform, we know that the cardinal $\#\mathbb{T}_{K}\le Cnh^d$. Thus we have
\ben
\|Sf-\Pi_h(Sf)\|_n^2\le\frac 1n\sum_{{K}\in\cM_h}\#\mathbb{T}_{K}\|Sf-\Pi_h(Sf)\|_{L^\infty({K})}^2\le Ch^4\|Sf\|_{H^2(\Om)}^2.
\een
On the other hand, we can derive by making use of inverse estimates that 
\ben
\|S_{\tau,h} f-\Pi_h(Sf)\|_n^2 & \le&\frac 1n\sum_{{K}\in\cM_h}\#\mathbb{T}_{K}\|S_{\tau,h} f-\Pi_h(Sf)\|_{L^\infty({K})}^2\\
 & \le&\frac 1n\sum_{{K}\in\cM_h}\#\mathbb{T}_{K}|{K}|^{-1}\|S_{\tau,h} f-\Pi_h(Sf)\|_{L^2({K})}^2\\
&\le&C\|S_{\tau,h} f-\Pi_h(Sf)\|_{L^2(\Om)}^2\\
&\le&C\|S_{\tau,h} f- Sf\|_{L^2(\Om)}^2+ C\|\Pi_h(Sf)- Sf\|_{L^2(\Om)}^2\\
&\leq&C\|S_{\tau,h} f- Sf\|_{L^2(\Om)}^2+Ch^4\|Sf\|_{H^2(\Om)}^2.
\een
Therefore,
\ben
\|Sf-S_{\tau,h} f\|_n\le C\|S_{\tau,h} f- Sf\|_{L^2(\Om)}+Ch^2\|f\|_{L^2(\Om)}.
\een
This completes the proof by \eqref{cc5}. 
\end{proof}

After the verification of Assumption \ref{Assumption3},  
the following stochastic convergence of the finite element method to the inverse heat source problem 
follows readily from Theorem \ref{thm:3.1}. 

\begin{theorem}\label{examthm:3.1}
Let $g\in H^2(0,T)$ and the measurement data \eqref{eq:data} be of the type {\bf (R1)}. 
Then there exist constants $\lambda_0 > 0$ and $C>0$ such that for any $\lambda_n \leq \lambda_0$ 
and $\tau|\ln\tau|=O(h^2)$, the following estimates hold for the solutions $f_n\in L^2(\Om)$ 
to \eqref{para-examp1} and $f_h\in V_h$ to \eqref{para-inversedis}:
\ben
& &\mathbb{E}\big[\|Sf^*- S_{\tau,h}f_h\|_n^2\big]\le C(\lam_n+h^4)\|f^*\|^2_{L^2(\Om)}+C\left(1+\frac{h^4}{\lam_n}\right)\frac{\sigma^2}{n\lambda_n^{d/4}},\\
& &\mathbb{E}\big[\|f^*- f_h\|_{L^2(\Omega)}^2\big]\le C\Big(1+\frac{h^4}{\lam_n}\Big)\|f^*\|^2_{L^2(\Om)}+
C\left(1+\frac{h^4}{\lam_n}\right)\frac{\sigma^2}{n\lambda_n^{1+d/4}}.
\een
{\ben
\mathbb{E}\big[\|f^*- f_h\|_{H^{-1}(\Omega)}^2\big]\le C(\lam^{1/2}_n+h^2)\Big(1+\frac{h^4}{\lam_n}\Big)\|f^*\|^2_{L^2(\Om)}+
C(\lam^{1/2}_n+h^2)\left(1+\frac{h^4}{\lam_n}\right)\frac{\sigma^2}{n\lambda_n^{1+d/4}}.
\een}
\end{theorem}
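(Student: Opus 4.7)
The strategy is to deduce Theorem~\ref{examthm:3.1} as a direct corollary of the general discrete convergence result Theorem~\ref{thm:3.1} (together with its $Z'$-version \eqref{w-con-d1}) applied to the heat source problem. Essentially all of the heavy lifting -- spectral asymptotics of $S$, identification of $Z'$, and regularity/error analysis for the fully discrete scheme -- has already been carried out in Lemmas~\ref{para-lem:2.1}, \ref{lemma-Z}, \ref{lem:new1}--\ref{examlem:discrete} and Theorem~\ref{para-the2.1}. What remains is to verify Assumption~\ref{Assumption3} for the scheme~\eqref{zz1} and then substitute the resulting parameters into the abstract estimates.

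Assumption~\ref{Assumption3}(2) is essentially free: since the right-hand side of \eqref{zz1} only tests the datum against $v_h \in V_h$, the identity $S_{\tau,h} f = S_{\tau,h}(P_h f)$ holds, where $P_h$ is the $L^2(\Omega)$-projection onto $V_h$. Taking $v_h = P_h f$ and using the $L^2$-stability of $P_h$ together with the uniform boundedness of $S_{\tau,h}:L^2(\Omega)\to L^2(\Omega)$ (from the standard discrete energy estimate for \eqref{zz1}) yields the required inequality with constant independent of $h$, $\tau$, $\lambda_n$. Assumption~\ref{Assumption3}(1) is exactly the content of Lemma~\ref{examlem:discrete}, which gives
\[
\|Sf - S_{\tau,h} f\|_n \le C(h^2 + \tau|\ln\tau|)\|f\|_{L^2(\Omega)};
\]
the hypothesis $\tau|\ln\tau| = O(h^2)$ then permits the choice $e(h) = C h^4$. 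From the verifications preceding Theorem~\ref{para-examthm:2.1}, Assumption~\ref{Assumption1} holds for this problem with $\alpha = \beta = 4/d$, so $1/\alpha = d/4$, and by Lemma~\ref{lemma-Z} (under the standing hypothesis $\rho_k = O(k^{4/d})$) one has $Z' = H^{-1}(\Omega)$.

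With these data in hand I would substitute $\alpha = 4/d$, $e(h) = Ch^4$ into \eqref{d2}, \eqref{d2-2} and \eqref{w-con-d1} to obtain the three claimed bounds. The one bookkeeping point -- and the only step beyond routine substitution -- is the simplification of the abstract prefactor $1 + e(h)/\lambda_n + N_h e(h)/\lambda_n^{1-1/\alpha}$ to the displayed factor $1 + h^4/\lambda_n$. For a quasi-uniform mesh one has $N_h = O(h^{-d})$, so $N_h e(h)/\lambda_n^{1-1/\alpha} = O(h^{4-d}\lambda_n^{d/4-1})$, and a short calculation shows this is controlled by $h^4/\lambda_n$ precisely in the resolution regime where $\lambda_n \lesssim h^4$, which is the regime in which the error estimate is of genuine interest. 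The $H^{-1}$ estimate follows identically from \eqref{w-con-d1} after applying the identification $Z' = H^{-1}(\Omega)$. I expect the absorption of the $N_h e(h)$ term to be the main (and only) friction point; every other ingredient is already available from the preceding lemmas and theorems.
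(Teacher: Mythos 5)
Your proposal is correct and follows essentially the same route as the paper: verify Assumption~\ref{Assumption3} via $S_{\tau,h}f=S_{\tau,h}(P_hf)$ and Lemma~\ref{examlem:discrete}, take $e(h)=Ch^4$ under $\tau|\ln\tau|=O(h^2)$, use $\alpha=4/d$, $N_h\le Ch^{-d}$ and $Z'=H^{-1}(\Omega)$, then substitute into \eqref{d2}, \eqref{d2-2} and \eqref{w-con-d1}. The only quibble is your claimed ``friction point'': since $N_he(h)/\lambda_n^{1-d/4}=(h^4/\lambda_n)^{1-d/4}$ with exponent $1-d/4\in(0,1)$, the elementary inequality $t^{1-d/4}\le 1+t$ (consider $t\le 1$ and $t\ge 1$ separately) absorbs this term into the prefactor $1+h^4/\lambda_n$ unconditionally, so no restriction to the regime $\lambda_n\lesssim h^4$ is needed — this is exactly how the paper disposes of it.
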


\begin{proof}
Since the mesh is assumed to be quasi-uniform,
the dimension $N_h$ of the linear finite element space $V_h$ is bounded by $N_h\leq Ch^{-d}$. By Theorem \ref{para-the2.1}, we know that $\al=4/d$. Take
$\tau|\ln\tau|=O(h^2)$, then we know from Theorem \ref{thm:3.1} that 
\ben
& &\mathbb{E}\big[\|Sf^*- S_{\tau,h}f_h\|_n^2\big]\le C(\lam_n+h^4)\|f^*\|^2_{L^2(\Om)}+C\left[1+\frac{h^4}{\lam_n}+\Big(\frac{h^4}{\lam_n}\Big)^{1-\frac d4}\right]\frac{\sigma^2}{n\lambda_n^{1+d/4}}.
\een
We can easily check that $({h^4}/{\lam_n})^{1-\frac d4}\leq 1$ for ${h^4}/{\lam_n}\leq 1$, and 
$({h^4}/{\lam_n})^{1-\frac d4}\leq {h^4}/{\lam_n}$ for ${h^4}/{\lam_n}\geq 1$. 
Therefore, we have $({h^4}/{\lam_n})^{1-\frac d4}\leq 1+{h^4}/{\lam_n}$. 
This leads to the conclusions of Theorem \ref{examthm:3.1}.
\end{proof}

We end this section with the following convergence of the finite element method 
to the inverse heat source problem {\bf (TIP)}, directly following from Theorem 
\ref{thm:4.2} by noticing that $N_h\le Ch^{-d}\le C\lam_n^{-\gamma/2}$ with $\gamma=d/2$. 

\begin{theorem}\label{examthm:4.2}
Let $g\in H^2(0,T)$, the measurement data \eqref{eq:data} is of the type {\bf (R2)} and 
$\rho_0=\|f^*\|_{L^2(\Omega)}+\sigma n^{-1/2}$. If we take
$h=O(\lam_n^{1/4}), \tau|\ln\tau|=O(\lam_n^{1/2})$, and $\lambda_n^{1/2+d/8}= O(\sigma n^{-1/2}\rho_0^{-1})$,
then there exists a constant $C>0$ such that for any $z>0$,
\ben
\mathbb{P}(\|S_{\tau,h}f_h-Sf^*\|_n\ge \lam_n^{1/2}\rho_0z)\le 2e^{-Cz^2},\ \ \ \ 
\mathbb{P}(\|f_h\|_{L^2(\Omega)}\ge \rho_0z)\le 2e^{-Cz^2}.
\een
{\ben
\mathbb{P}(\|f_h-f^*\|_{H^{-1}(\Omega)}\ge \lambda_n^{1/4}\rho_0z)\le 2e^{-Cz^2}.
\een}
\end{theorem}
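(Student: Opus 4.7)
The plan is to obtain Theorem \ref{examthm:4.2} as a direct corollary of the abstract Theorem \ref{thm:4.2}, by verifying its hypotheses in the concrete setting of the inverse heat source problem {\bf (TIP)} with the finite element scheme \eqref{zz1}. Since Assumption \ref{Assumption2} has already been certified (with exponent $\gamma=d/2$) in the discussion preceding Theorem \ref{para-examthm:2.1} via the Birman--Solomyak entropy estimate in Lemma \ref{para-examlem:4.3} applied to the unit ball of $H^2(\Omega)$, what remains is to certify Assumption \ref{Assumption3} together with the three scaling conditions $e(h)\le C\lam_n$, $N_h\,e(h)\le C\lam_n^{1-\gamma/2}$, and $\lam_n^{1/2+\gamma/4}=O(\sigma n^{-1/2}\rho_0^{-1})$ appearing in Theorem \ref{thm:4.2}.

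First I would identify the discretization error function $e(h)$. Lemma \ref{examlem:discrete} yields
\[
\|Sf-S_{\tau,h}f\|_n\le C(h^2+\tau|\ln\tau|)\|f\|_{L^2(\Om)}\qquad\forall\,f\in L^2(\Om),
\]
so Assumption \ref{Assumption3}(1) holds with $e(h)=C(h^2+\tau|\ln\tau|)^2$. The substitutions $h=O(\lam_n^{1/4})$ and $\tau|\ln\tau|=O(\lam_n^{1/2})$ therefore deliver $e(h)=O(\lam_n)$, which is the first scaling condition. Assumption \ref{Assumption3}(2) is free: since $S_{\tau,h}$ factors through the $L^2$-projection $P_h:L^2(\Om)\to V_h$ (because the right-hand side in \eqref{zz1} only tests $f$ against elements of $V_h$), one has $S_{\tau,h}f=S_{\tau,h}(P_hf)$, and choosing $v_h=P_hf$ makes $\|S_hf-S_hv_h\|_n=0$ while $\lam_n\|f-P_hf\|_{L^2(\Om)}^2\le\lam_n\|f\|_{L^2(\Om)}^2$.

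Next I would verify the dimension-weighted bound. Quasi-uniformity of $\cM_h$ gives $N_h\le Ch^{-d}$, and $h=O(\lam_n^{1/4})$ produces $N_h\le C\lam_n^{-d/4}$, hence
\[
N_h\,e(h)\le C\lam_n^{-d/4}\cdot\lam_n=C\lam_n^{1-d/4}=C\lam_n^{1-\gamma/2},
\]
which is the second required scaling with $\gamma=d/2$. The third scaling condition $\lam_n^{1/2+\gamma/4}=\lam_n^{1/2+d/8}=O(\sigma n^{-1/2}\rho_0^{-1})$ is precisely the hypothesis imposed in the statement. With all hypotheses of Theorem \ref{thm:4.2} satisfied, invoking it yields the first two probability estimates for $\|S_{\tau,h}f_h-Sf^*\|_n$ and $\|f_h\|_{L^2(\Om)}$ verbatim.

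For the last estimate in the $H^{-1}$-norm, I would appeal to the weaker convergence \eqref{w-con-ds} of Theorem \ref{thm:4.2}, which bounds $\|f_h-f^*\|_{Z'}$ by $\lam_n^{1/4}\rho_0 z$ with exponentially decaying tail. The identification $Z'=H^{-1}(\Om)$ is the content of Lemma \ref{lemma-Z} under the assumed two-sided eigenvalue bound $\rho_k=O(k^{4/d})$ (made standing in the remark following that lemma and guaranteed here whenever $g\ge g_{\min}>0$ and $T$ is large enough), and Theorem \ref{para-the2.1} supplies the matching lower bound. I do not foresee any real obstacle, since every ingredient has been prepared; the only minor verification to be careful about is that the logarithmic factor coming from Lemma \ref{lem:new3} is absorbed cleanly into the scaling $\tau|\ln\tau|=O(\lam_n^{1/2})$, which is exactly why that form of the hypothesis is imposed.
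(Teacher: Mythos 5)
Your proposal is correct and matches the paper's own (very terse) argument: the paper likewise obtains this theorem as a direct corollary of Theorem \ref{thm:4.2}, using $e(h)=O\big((h^2+\tau|\ln\tau|)^2\big)=O(\lam_n)$ from Lemma \ref{examlem:discrete}, the bound $N_h\le Ch^{-d}\le C\lam_n^{-\gamma/2}$ with $\gamma=d/2$, the fact that $S_{\tau,h}f=S_{\tau,h}(P_hf)$ makes Assumption \ref{Assumption3}(2) trivial, and the standing identification $Z'=H^{-1}(\Omega)$ for the last estimate. You have simply filled in the details the paper leaves implicit.
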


\section{Numerical examples}\label{sec:numerics}
In this section, we present several numerical examples to confirm the theoretical results 
in previous sections. 
We take the domain $\Omega=(0,1)\times(0,1)$ and a set of uniformly distributed measurement 
locations $\{x_i\}_{i=1}^n$ in $\Omega$. 
In all examples below, we take the coefficients $a(x)=1, c(x)=0$,
which fullfil the uniform ellipticity condition, and $g(t)\equiv 1$, $T=1$. 
The finite element mesh ${\cal M}_h$ of $\Om$ is constructed by first dividing $\Om$ into $h^{-1}\times h^{-1}$ uniform rectangles and then connecting the lower left and upper right vertices of each rectangle. 
We set the noise $e_1$, $\cdots$, $e_n$ in the dataset \eqref{eq:data} to be the normal random variables with variance $\sigma$. 

Motivated by Theorem \ref{para-examthm:2.1}, 
we propose a self-consistent algorithm to determine the regularization parameter $\lam_n$ in 
\eqref{para-inversedis} based on the rule
\beq\label{para-examo1}
\lam_n^{1/2+d/8}=\sigma n^{-1/2}\|f^*\|_{L^2(\Om)}^{-1}. 
\eeq 
This choice requires the knowledge of the true source function $f^*$ and the noise level $\sigma$. 
We propose now a self-consistent algorithm to determine the parameter $\lam_n$, 
without knowing the true source function $f^*$ and the noise level $\sigma$. 
To do so, we estimate $\|f^*\|_{L^2(\Om)}$ by $\|f_h\|_{L^2(\Om}$ and $\sigma$ by $\|S_{\tau,h}f_h-m\|_n$ 
since $\|Sf^*-m\|_n=\|e\|_n$. This is expected to yield 
a good estimate of the variance by the law of large numbers.

 \begin{alg}[Computing an estimate of the regularization parameter $\lam_n$]
 \label{para-examj1}
 
 $1^\circ$ Given an initial guess of $\lambda_{n,0}$; for $j=0, 1, \cdots$, do the following 
 
 $2^\circ$ Solve \eqref{para-inversedis} for $f_h$ 
 with $\lambda_{n}$ replaced by $\lambda_{n,j}$ over the mesh $\cM_h$;
 
 $3^\circ$ Update $\lambda_{n,j+1}$: 
 ~~$\lambda^{1/2+d/8}_{n,j+1} = n^{-1/2} \|S_{\tau,h} f_h-m\|_n \,\|f_h\|^{-1}_{L^2(\Om)}$.
\end{alg}    

A natural choice of the initial guess is $\lam_{n,0}=n^{- 4/{(d+4)}}$ since $f^*$ and $\sigma$ are unknown, which is used in our numerical examples. 

\begin{example} \label{para-examnumerical.1} 
This example is used to verify the nearly optimality of 
the choice of the smoothing parameter $\lambda_n$ suggested by (\ref{para-examo1}). 
We choose $n=10^4$, 
$\sigma=0.1$ or $\sigma=0.01$, 
and the mesh size $h=0.05$ and the time step size $\tau=0.01$, which 
are sufficiently small so that the finite element errors are negligible. 
We take the true source $f^*$ to be the function whose surface is given as in 
Fig.\,\ref{para-examexact}.  
\end{example}

Example \ref{para-examnumerical.1} demonstrates the nearly optimality of 
the choice of the smoothing parameter $\lambda_n$ suggested by (\ref{para-examo1}). 
In fact, 
we have $\|f^*\|_{L^2(\Omega)}\approx 0.54$, then \eqref{para-examo1} suggests 
$\lambda_n\approx 2.3 \times 10^{-4}$ (for $\sigma=0.1$) 
and $\lambda_n\approx 1.1 \times 10^{-5}$ (for $\sigma=0.01$). 
These two approximate $\lambda_n$'s are indeed very close 
to the optimal $\lambda_n=1 \times 10^{-4}$ (for $\sigma=0.1$)  and $\lambda_n=1 \times 10^{-5}$
(for $\sigma=0.01$), which we have estimated by computing the errors 
$\|Sf^*-S_{\tau,h}f_h\|_n$ with 10 different choices of regularization parameter: 
$\lam_{n,k}=10^{-k}$ ($k=1,2,\cdots,10$), see Fig.\,\ref{para-examexample.1}.

\begin{figure}[t]
\begin{center}
\begin{tabular}{cc}
\includegraphics[width=6cm]{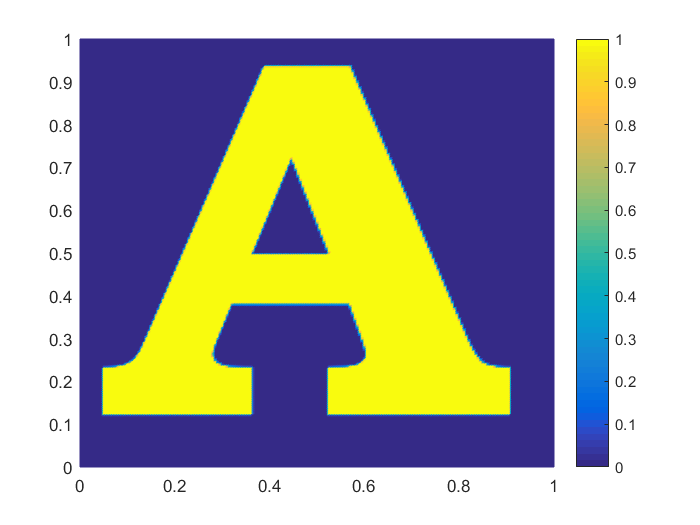} 
\end{tabular}
\end{center}
\caption{\em The surface plot of the exact solution $f^*$.}
\label{para-examexact}
\end{figure}

\begin{figure}[t]
\begin{center}
\begin{tabular}{cc}
\includegraphics[width=5cm]{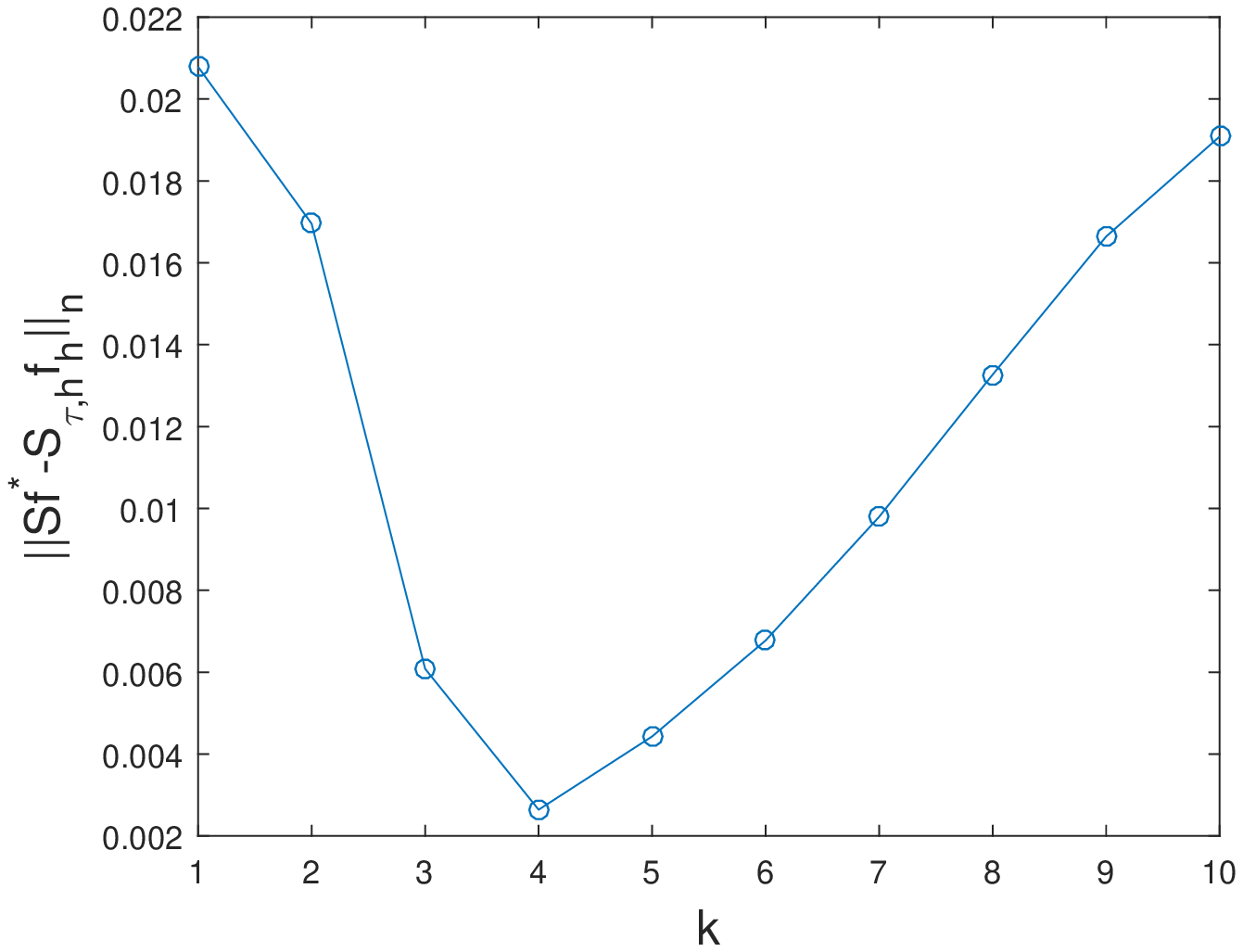} 
\includegraphics[width=5cm]{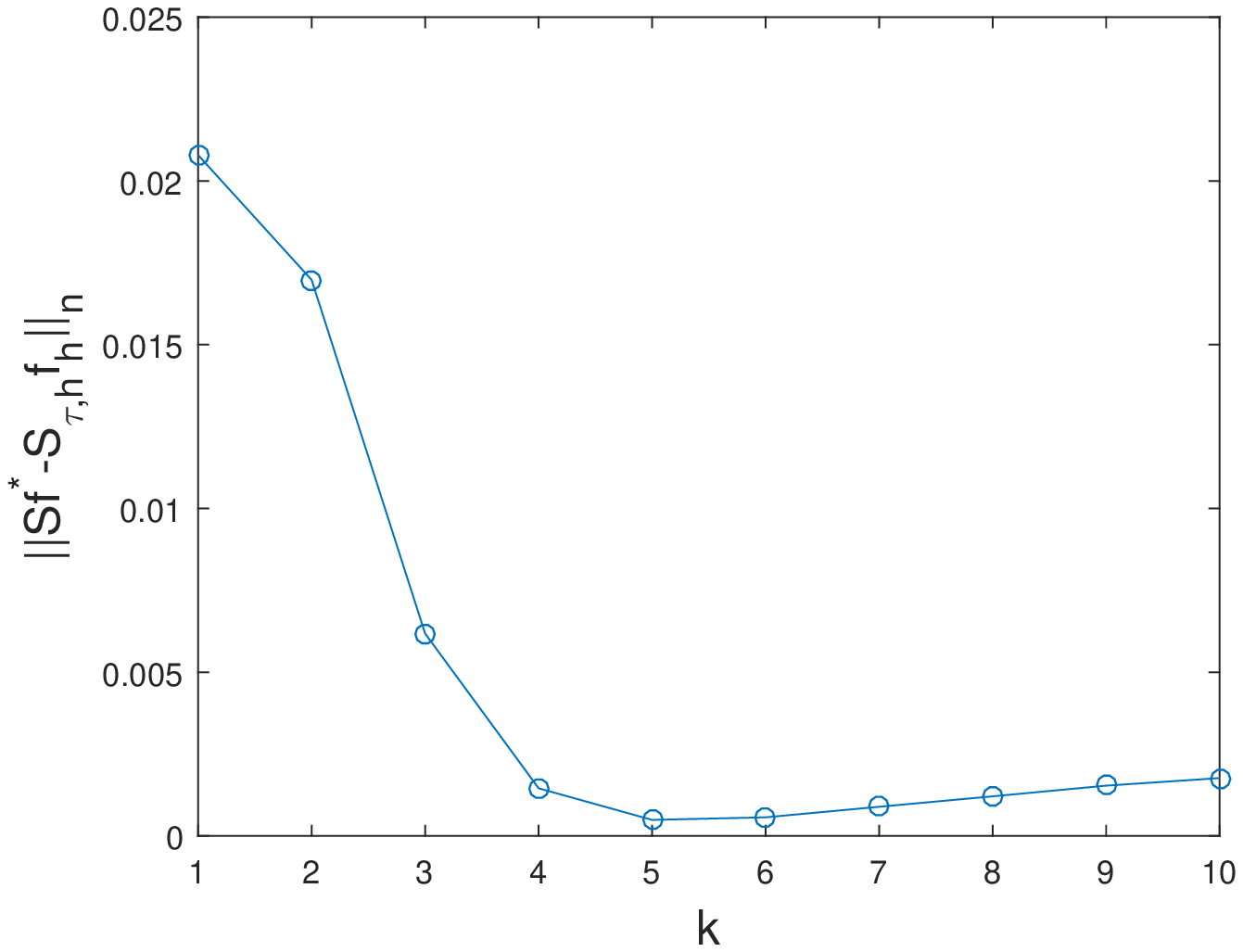} 
\end{tabular}
\end{center}
\caption{\em The empirical errors $\|Sf^*-S_{\tau,h}f_h\|_n$ with $\lambda_n=10^{-k}$ $(k=1,\cdots,10)$ 
for $\sigma=0.1$ (left) and $\sigma=0.01$ (right).}
\label{para-examexample.1}
\end{figure}




\begin{example} \label{para-examnumerical.32}
This example is presented to verify if the probability density function of the empirical error 
$\|Sf^*-S_{\tau,h} f_h\|_n$ has an exponentially decaying tail. 
We set the variance $\sigma=0.001$, $n=25\times 10^4$, and 
choose the mesh size $h$ and time step size $\tau$ to be
small enough so that the finite element errors are negligible.
We take 10,000 samples and compute the empirical error $\|Sf^*-S_{\tau,h} f_h\|_n$ for each sampling. 
\end{example}

In Example \ref{para-examnumerical.32}, we can compute that $\|Sf^*\|_{L^{\infty}(\Omega)}\approx 0.04$, 
so the relative noise level $\sigma/\|Sf^*\|_{L^\infty(\Om)}$ is about $2.5\%$ for this example. 
Figure \ref{para-examexample.32}(a) shows the histogram plot of the empirical errors, 
while Figure \ref{para-examexample.32}(b) plots the quantile-quantile (Q-Q) plot to compare the sample distribution of the empirical error with the standard normal distribution. The Q-Q plot is a standard graphic tool in statistics to check the data distribution \cite{G}. If the sample distribution is indeed normal, the Q-Q plot 
should give a scattered plot, where the points show a linear relationship between the sample and the theoretical quantiles. We can observe from Figure \ref{para-examexample.32} (right)  that almost all the points are concentrated around the dotted line, which implies that the overall distribution of the error is very close to 
a normal distribution. Moreover, the points around the two ends are also not far from the line, which indicates that the tail distribution of the error is also close to a Gaussian tail, as indicated in 
Theorem\,\ref{examthm:4.2}. 
The probability density function is computed by the Matlab function 'qqplot'. 

\begin{figure}[t]
\begin{center}
\begin{tabular}{cc}
\includegraphics[width=6cm]{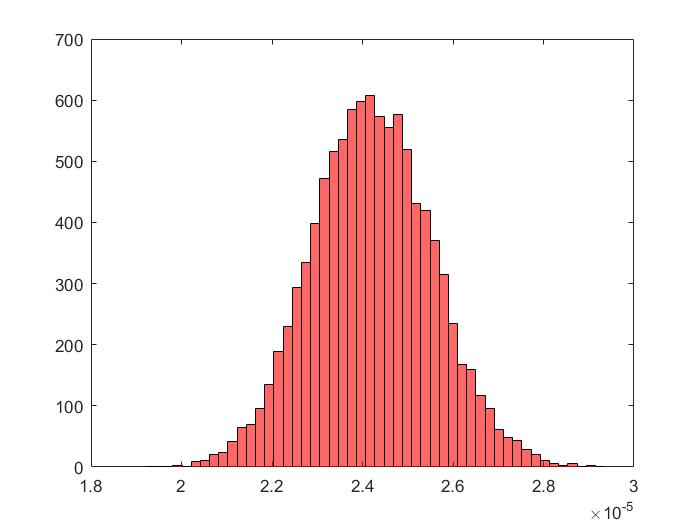} & 
\includegraphics[width=6cm]{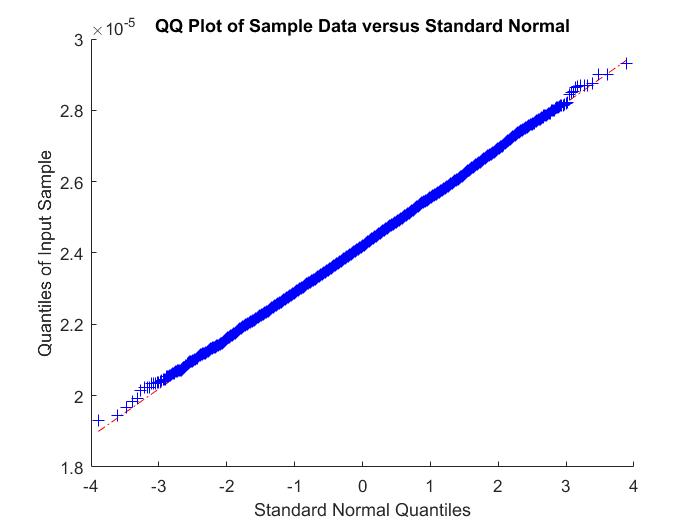} 
\end{tabular}
\end{center}
\caption{\em The histogram (left) and quantile-quantile (right) 
plots of the empirical error $\|S_{\tau,h} f_h-Sf^*\|_n$ with $10,000$ samples.}\label{para-examexample.32}
\end{figure}

\begin{example}\label{para-examnumerical.5}
This example is to confirm Theorems \ref{examthm:3.1} and \ref{examthm:4.2}, namely, 
to verify if the empirical error $\|Sf^*-S_{\tau,h} f_h\|_n$ depends linearly on $\lambda_n^{1/2}$ 
when the regularization parameter $\lambda_n$ is taken by the optimal choice \eqref{para-examo1}. 
The mesh size $h=\lambda_n^{1/4}$ and the time step size $\tau|\ln\tau|=\lambda_n^{1/2}$ are chosen according to Theorems \ref{examthm:3.1} and \ref{examthm:4.2}. 
We take the true source $f^*$ to be the function given in Figure \ref{para-examexact}, 
and $n$ to change from $25\times 10^2$ to $25\times 10^4$. 
\end{example}

\begin{figure}[t]
\begin{center}
\begin{tabular}{cc}
\includegraphics[width=6cm]{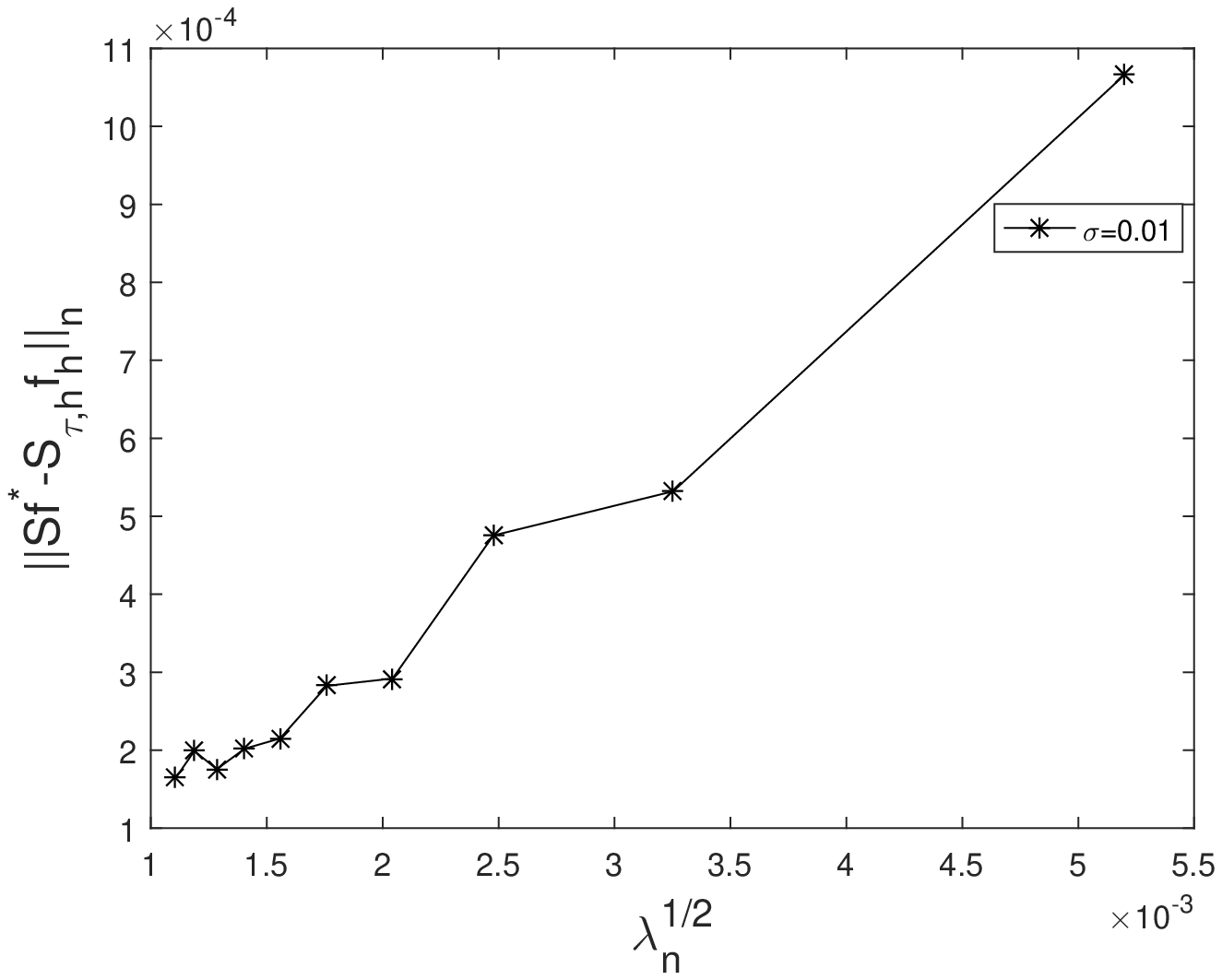} &
\includegraphics[width=6cm]{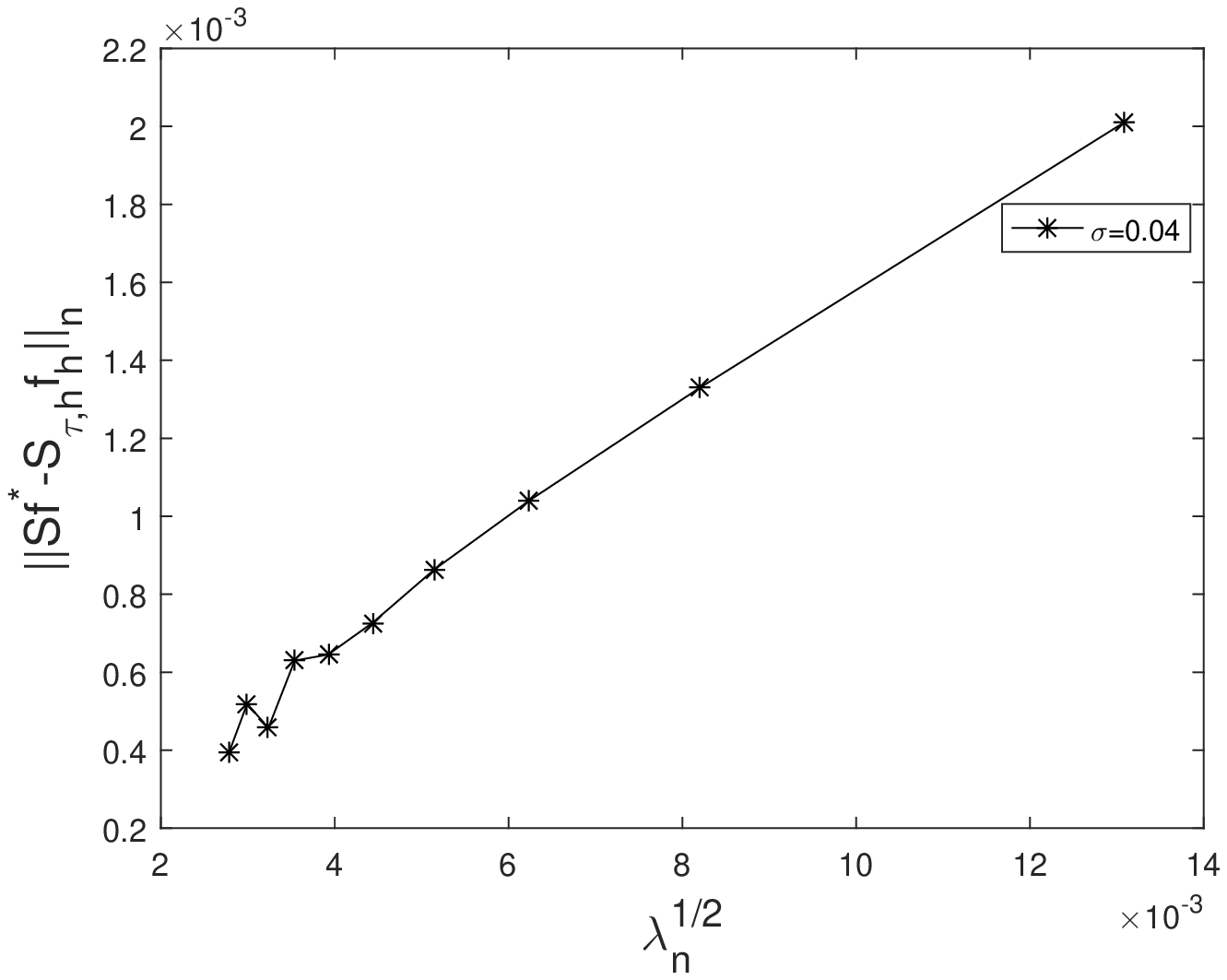} 
\end{tabular}
\end{center}
\caption{\em Linear dependence of the empirical error $\|Sf_*-S_{\tau,h} f_h\|_n$ 
on $\lambda_n^{1/2}$ with $\sigma=0.01$ (left) and $\sigma=0.04$ (right).}
\label{para-examexample.5}
\end{figure}

We can see from Figure \ref{para-examexample.5} clearly the linear dependence of the empirical error $\|Sf^*-S_{\tau,h} f_h\|_n$ on $\lam_n^{1/2}$ for $\sigma=0.01$ and $0.04$. We can compute that 
$\|Sf^*\|_{L^{\infty}(\Omega)}\approx 0.04$, so the relative noise levels $\sigma/\|Sf^*\|_{L^\infty(\Om)}$ are 
about $25\%$ and $100\%$ for $\sigma=0.01$ and $0.04$, respectively.

Through the previous 3 examples, we have verified the optimality of the choice rule \eqref{para-examo1} 
for $\lambda_n$, the stochastic convergence (Theorem \ref{examthm:4.2}),
and the convergence order of the finite element method. 
But we do not know the exact solution and the variance of the noise in most applications, 
so we use the next example to show the efficiency of Algorithm \ref{para-examj1} to determine an optimal 
regularization parameter $\lambda_n$ iteratively, without the knowledge of $f^*$ and $\sigma$. 

\begin{example} \label{para-examnumerical.4}
We choose $n=25\times 10^4$ and set the noise $e_1$, $\cdots$, $e_n$ in the dataset \eqref{eq:data} 
to be independent normal random variables with variance $\sigma=0.001$. 
Algorithm \ref{para-examj1} is terminated when the absolute difference between 
two consecutive iterates $\lambda_{n,k}$ and $\lambda_{n,k+1}$ is less than $10^{-10}$. 
\end{example}

We can compute that $\|Sf^*\|_{L^{\infty}(\Omega)}\approx 0.04$, so the relative noise level 
{\small $\sigma/\|Sf^*\|_{L^\infty(\Om)}$}
is about $2.5\%$ in this example. Figure \ref{para-examexample.4} shows clearly the convergence of 
the sequence $\{\lambda_{n,k}\}$ generated by Algorithm \ref{para-examj1}.
The numerical computation gives $\lambda_{n, 4}=5.53\times 10^{-8}$ that agrees very well 
with the optimal choice $5.33\times 10^{-8}$ given by \eqref{para-examo1}. 
Furthermore, $\|m-S_{\tau,h} f_h\|_n=9.99\times 10^{-4}$ provides also a good estimate of the variance $\sigma$. 

\begin{figure}[t]
\begin{center}
\begin{tabular}{cc}
\includegraphics[width=5cm]{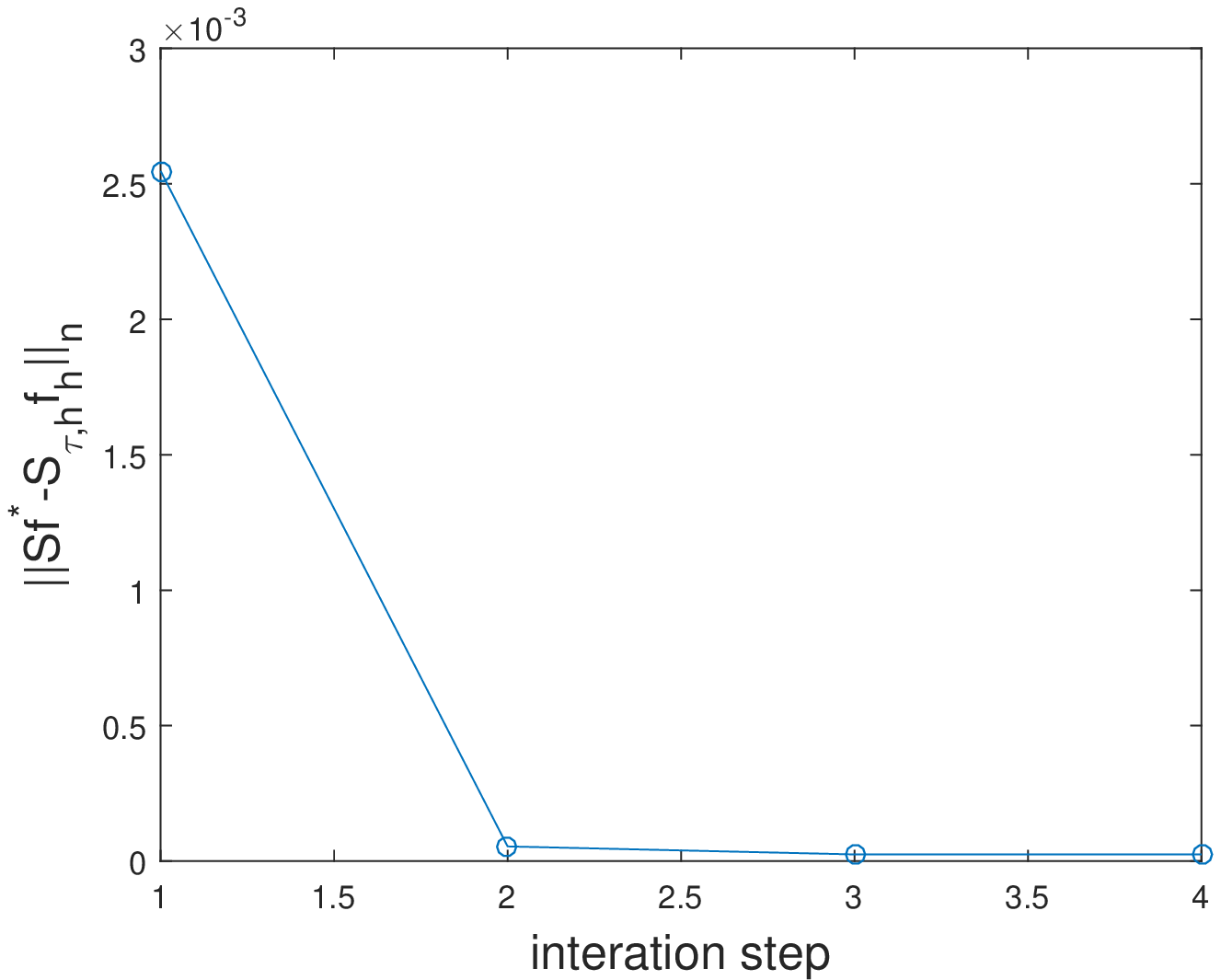} &
\includegraphics[width=5cm]{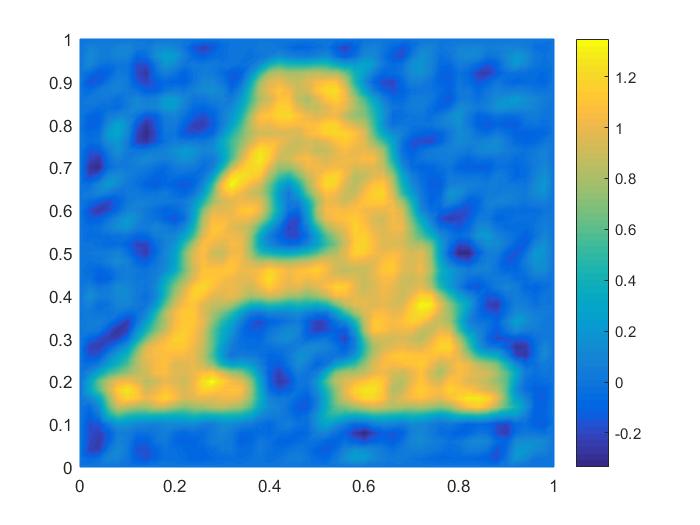}
\end{tabular}
\end{center}
\caption{\em The relative empirical error $\|Sf^*-S_{\tau,h} f_h\|_n$ at each iteration (left);
The computed solution $f_h$ at the end of iterations (right).}
\label{para-examexample.4}
\end{figure}



\begin{thebibliography}{99}

\bibitem{new1} B. Abdelaziz, A. El Badia and A. El Hajj, 
{\it Reconstruction of extended sources with small supports in the elliptic equation $\triangle u+\mu u = F $ from a single Cauchy data}, Comptes Rendus Mathmatique, 351 (2013), pp. 797-801.

\bibitem{agmon}
S. Agmon, {\it Lectures on Elliptic Boundary Problems,} Van Norstrand, Princeton, NJ, 1965.

\bibitem{AB05} V. Akcelik, G. Biros, A. Draganescu, O. Ghattas, J. Hill, and B. Waanders,
{\it Dynamic data-driven inversion for terascale simulations: Real-time identification of airborne contaminants},
{Proceedings of Supercomputing}, Seattle, Washington, 2005.

\bibitem{AB01}  J. Atmadja and A. Bagtzoglou, {\it State of the art report on mathematical methods for groundwater pollution source identification},   Environmental Forensics, 2 (2001), pp.  205-214.

\bibitem{new2} A. El Badia, A. El Hajj, M. Jazar, and H. Moustafa,
{\it Lipchitz stability estimates for an inverse source problem in an elliptic equation from interior measurements}, 
Applicable Analysis, 95 (2016), pp.  1873-1890. 

\bibitem{new3} A. El Badia, T. Ha Duong, and F. Moutazaim, {\it Numerical solution for the identification of source terms from boundary measurements}, Inverse Problems in Engineering, 8 (2000), pp.  345-364

\bibitem{Badia2011} A. El Badia and T. Nara, {\it An inverse source problem for Helmholtz's equation from the Cauchy data with a single wave number,} Inverse Problems, 27 (2011), 105001.

\bibitem{Birman}
M.S. Birman and M.Z. Solomyak, {\it Piecewise polynomial approximations of functions of the classes $W^k_{\alpha}$,} Mat. Sb., 73 (1967), pp.  331-355.

\bibitem{BH10}
R.I. Bot and B. Hofmann, {\it An extension of the variational inequality approach for nonlinear ill-posed problems,} Journal of Integral Equations and Applications, 22 (2010),  pp. 369--392.

\bibitem{Bou2003}
S. Boucheron, G. Lugosi, and P. Massart, {\it Concentration inequalities using the entropy method,}  Ann. Probab., 31 (2003),  pp. 1583-1614. 

\bibitem{ChenJiangZou20}
D.H. Chen,  D. Jiang, and J. Zou, {\it Convergence rates of Tikhonov regularizations
for elliptic and parabolic inverse radiativity problems}, Inverse Problems, to appear.

\bibitem{ChenYousept}
D.H. Chen and I. Yousept, {\it Variational source condition for ill-posed backward nonlinear Maxwell's equations}, Inverse Problems, 35 (2019), 025001.

\bibitem{Chen-Zhang}
Z. Chen, R. Tuo, and W. Zhang, {\it Stochastic convergence of a nonconforming finite element method for the thin plate spline smoother for observational data,} SIAM J. Numer. Anal., 56 (2018),  pp. 635-659.

\bibitem{Ciarlet}
P.G. Ciarlet, {\it The Finite Element Method for Elliptic Problems,} North-Holland, Amsterdam, 1978.



\bibitem{engl89}
H.W. Engl, K. Kunisch, and A. Neubauer,  {\it Convergence rates for Tikhonov regularization of
nonlinear ill-posed problems}, Inverse problems, 5 (1989), pp.  523-540.

\bibitem{engl2000}
H.W. Engl and J. Zou, {\it A new approach to convergence rate analysis of Tikhonov
regularization for parameter identification in heat conduction}, Inverse Problems,
16 (2000),  pp. 1907-1923.

\bibitem{Evans}
L.C. Evans, {\it Partial Differential Equations}, American Mathematical Society, Providence, Rhode Island, 1998.

\bibitem{Fleckinger}
J. Fleckinger and M. Lapidus, {\it Eigenvalues of elliptic boundary value problems with an indefinite weight function,} Transactions of the American Mathematical Society, 295 (1986),  pp. 305-324.

\bibitem{Fle10}
J. Flemming, {\it Theory and examples of variational regularization with non-metric fitting functionals,} J. Inverse Ill-Posed Probl., 18 (2010),  pp. 677-699.

\bibitem{new4} G. Garcia, A. Osses, and M. Tapia, {\it A heat source reconstruction formula from single internal measurements using a family of null controls}, J. Inverse Ill-Posed Probl., 21 (2013), pp.  755-779. 

\bibitem{GER83} S. Gorelick, B. Evans, and I. Remson,
{\it Identifying sources of groundwater pullution: an optimization approach}, {Water Resources Research},
{19} (1983),  pp. 779-790.

\bibitem{Grasm10}
M. Grasmair, {\it Generalized {B}regman distances and convergence rates for non-convex
regularization methods}, {Inverse Problems}, 26 (2010), 115014.

\bibitem{H09} A. Hamdi,
{\it The recovery of a time-dependent point source in a linear transport equation: application to surface water pollution},
{Inverse Problems}, {24} (2009), pp.  1-18.

\bibitem{H092}   A. Hamdi,
{\it Identification of a time-varying point source in a system of two coupled linear diffusion-advection- reaction equations: application to surface water pollution},  {Inverse Problems}, 25 (2009), pp.  1-21.

\bibitem{HKPS07}
B. Hofmann, B. Kaltenbacher, C. P\"{o}schl, and O. Scherzer,
{\it A convergence rates result for {T}ikhonov regularization in {B}anach
  spaces with non-smooth operators},
 {Inverse Problems}, 23 (2007),  pp. 987--1010.
 
\bibitem{Hohage15}
T. Hohage and F. Weilding, {\it Verification of a variational source condition
for acoustic inverse medium scattering problems}, Inverse Problems, 31 (2015), 075006.

\bibitem{Hohage1502}
T. Hohage and F. Weilding, {\it Variational source condition and stability estimates for inverse electromagnetic medium scattering problems}, Inverse Problems Imaging, 11 (2017),  pp. 203-220.

\bibitem{huzou19}
Q. Hu, S. Shu, and J. Zou, 
{\it A new variational approach for inverse source problems}, 
Numer. Math. Theor. Meth. Appl., 12 (2019),  pp. 331-347.

\bibitem{new6} V. Isakov, {\it Inverse source problems for partial differential equations}, Springer-Verlag, New York, 1998.

\bibitem{Isakov2013} V. Isakov, S. Leung, and J. Qian, 
{\it A Three-dimensional inverse gravimetry problem for ice with snow caps,}
Inverse Problems and Imaging, 7 (2013),  pp. 523-544.

\bibitem{Jin-zhou}
B. Jin and Z. Zhou, {\it Error analysis of finite element approximations of diffusion coefficient identification for elliptic and parabolic problems}, arXiv:2010.02447.

\bibitem{KLW20019} J. Krebs, A.K. Louis and H. Wendland, {\it Sobolev error estimates
and a priori parameter selection for semi-discrete Tikhonov regularization}, J. Inverse Ill-Posed Probl. 17 (2009), pp. 845-869.

\bibitem{liu16} 
C.-S. Liu, {\it An integral equation method to recover non-additive and non-separable
heat source without initial temperature}, Intern. J. Heat Mass
Transfer, 97 (2016),  pp. 943-953.

\bibitem{LZ07}   X. Liu and Z. Zhai,
{\it Inverse modeling methods for indoor airborne pollutant tracking literature review and fundamentals},
{Indoor Air}, {17} (2007),  pp. 419-438.

\bibitem{L08}   X. Liu,
{\it Identification of Indoor Airborne Contaminant Sources with Probability-Based Inverse Modeling Methods},
{Ph.D. Thesis}, Department of Civil, Environmental and Architectural Engineering, University of Colorado, 2008.

\bibitem{nelson20}
P. Nelson and S.H. Yoon, 
{\it Estimation of acoustic source strength by inverse methods: Part I, conditioning of the inverse problem},
J. Sound Vibration, 233 (2000), pp.  639-664.

\bibitem{NNR98} G. Nunnari, A. Nucifora, and C. Randieri,
{\it The application of neural techniques to the modelling of time-series of atmospheric pollution data},
{Ecological Modelling}, {111} (1998),  pp. 187-205.

\bibitem{SK95}  T. Skaggs and Z. Kabala,
{\it Recovering the history of a groundwater contaminant plume: method of quasi-reversibility},
Water Resources Research, 31 (1995),  pp. 2669-2673.

\bibitem{SK97}  M. Snodgrass and P. Kitanidis,
{\it A geostatistical approach to contaminant source identification},  {Water Resources Research},
{33} (1997),  pp. 537-546.

\bibitem{tadi97} 
M. Tadi, {\it Inverse heat conduction based on boundary measurements}, Inverse Problems,
13 (1997),  pp. 1585-1605.

\bibitem{Thomee}
V. Thomee, {\it Galerkin Finite Element Methods for Parabolic Problems}, Springer Verlag, Berlin, 2006.

\bibitem{Utreras}
F.I. Utreras, {\it Convergence rates for multivariate smoothing spline functions,} J. Approx.Theory, 52 (1988),  pp. 1-27.

\bibitem{Geer}
S.A. van de Geer, {\it Empirical process in M-estimation,} Cambridge University Press, Cambridge, 2000.

\bibitem{Vaart}
A.W. van der Vaart and J.A. Wellner, {\it Weak Convergence and Empirical Processes: with Applications to Statistics,} Springer, New York, 1996.

\bibitem{zou10}
L. Wang and J. Zou, 
{\it Error estimates of finite element methods for parameter identifications in elliptic and parabolic systems}, 
Disc. Cont. Dyn. Sys. B, 14 (2010),  pp. 1641-1670. 

\bibitem{G}
M.B. Wilk and R. Gnanadesikan, {\it Probability plotting methods for the analysis of data}, Biometrika, 55 (1968),  pp. 1-17.

\bibitem{WY10}  J. Wong and P. Yuan,
{\it A FE-based algorithm for the inverse natural convection problem},
Intern. J. Numer. Methods Fluids, 68 (2012),  pp. 48-82.

\bibitem{ZZFZG11} X. Zhang, C.X. Zhu, G. Feng, H. Zhu, and P. Guo,
{\it Potential use of bacteroidales specific 16S rRNA in tracking the rural pond-drinking water pollution},
{J. Agro-Envir. Sci.}, {30} (2011),  pp. 1880-1887.

\bibitem{ZCP01} B. Zhu, Y. Chen, and J. Peng, {\it Lead isotope geochemistry of the urban environment in the Pearl River Delta},
{Appl. Geochem.}, {16}  (2011),   pp. 409-417.

\end{thebibliography}
\end{document}